\documentclass{amsart}[12pt]
\usepackage[table]{xcolor}
\usepackage{float}
\usepackage[bookmarksopen,bookmarksdepth=2,backref=page]{hyperref}
\hypersetup{
    colorlinks=true,
    linkcolor=blue,
    filecolor=magenta, 
    citecolor=cyan,  
    urlcolor=cyan,
    pdftitle={Fields of definition for  triangle groups},
    pdfpagemode=FullScreen,
    hyperindex=true,
    }
\usepackage{etex,mathrsfs } 
\usepackage{verbatim}
\usepackage{url}
\usepackage{amssymb,amsmath,amsfonts,amsthm,epsfig,amscd,stmaryrd}
\usepackage{stmaryrd}
\usepackage{diagbox}
\usepackage{booktabs}

\def\Kbar{\overline{K}}
\def\OL{\mathcal{O}}
\def\e{\mathbf{e}}

\def\R{\mathbf{R}} 
\def\Z{\mathbf{Z}}

\def\Q{\mathbf{Q}}
\def\GG{\mathbf{G}}
\def\Qbar{\overline{\Q}}
\def\Gal{\mathrm{Gal}}

\def\tareesidedbox#1{\setbox0=\hbox{$#1$}\dimen0=\wd0 \advance\dimen0 by3pt\rlap{\hbox{\vrule height9pt width.4pt depth2pt \kern-.4pt\vrule height9.4pt width\dimen0 depth-9pt\kern-.4pt \vrule height9pt width.4pt depth2pt}} \relax \hbox to\dimen0{\hss$#1$\hss}}
\def\ho#1{\tareesidedbox{#1}}

\newif\iffinalrun
\iffinalrun
\else
 \fi
\iffinalrun
  \newcommand{\need}[1]{}
  \newcommand{\mar}[1]{}
\else
  \newcommand{\need}[1]{{\tiny *** #1}}
\newcommand{\mar}[1]{\marginpar{\raggedright\tiny  #1 }}\fi
\renewcommand\mathbb{\mathbf}

\numberwithin{equation}{subsection}
\def\numequation{\addtocounter{subsubsection}{1}\begin{equation}}

\counterwithin{figure}{equation}

\makeatletter\let\c@equation\c@subsubsection\makeatother

\newtheorem{theorem}[subsubsection]{Theorem}

\newtheorem{lemma}[subsubsection]{Lemma}

\newtheorem{iprob}{Problem}

\newtheorem{ithm}[iprob]{Theorem}

\newtheorem{iconj}{Conjecture}

\theoremstyle{definition}
\newtheorem{example}[subsubsection]{Example}
\newtheorem{df}[subsubsection]{Definition}
\newtheorem{remark}[subsubsection]{Remark}

\def\SL{\mathrm{SL}}
\def\PSL{\mathrm{PSL}}
\def\v{\mathbf{v}}
\def\w{\mathbf{w}}

\ifdefined\NewCommandCopy
  \NewCommandCopy\latexbibitem\bibitem
\else
  \let\latexbibitem\bibitem
  \usepackage{xparse}
\fi

\renewcommand{\bibitem}[2][]{
  \def\mykey{#1}
  \def\cmpkeyA{Kro57}
  \def\cmpkeyB{Sch73}
  \def\cmpkeyC{Fri92}
    \def\cmpkeyD{Poi82}
 \ifx\mykey\cmpkeyA
    \latexbibitem[Kro1857]{#2} 
     \else\ifx\mykey\cmpkeyB
    \latexbibitem[Sch1873]{#2}
  \else\ifx\mykey\cmpkeyC
    \latexbibitem[Fri1892]{#2}
      \else\ifx\mykey\cmpkeyD
    \latexbibitem[Poi1882]{#2}
   \else
    \latexbibitem[#1]{#2}
  \fi\fi\fi\fi
  }

 \hfuzz=15.002pt 

\thanks{This project was supported in part by NSF Grant DMS-2001097.}

\begin{document}

\title{Fields of definition for  triangle groups as Fuchsian groups}

 \author[Frank Calegari]{Frank Calegari}
 \email{fcale@uchicago.edu}
 \address{The University of Chicago,
5734 S University Ave,
Chicago, IL 60637, USA}

\author[Qiankang Chen]{Qiankang Chen}
 \email{jackqkchen@163.com} 

\begin{abstract}
The  compact hyperbolic triangle
group \(\Delta(p,q,r)\) admits a canonical
representation to \(\PSL_2(\R)\)
(unique, up to conjugation) whose image is discrete,
that is, a Fuchsian group.
The trace field of this representation is
\[K = \Q(\cos(\pi/p), \cos(\pi/q), \cos(\pi/r)).\]
 We prove
that there are exactly eleven such groups which are 
conjugate to subgroups of \(\PSL_2(K)\). 
These groups are precisely the
triangle groups which belong to the ``Hilbert Series''
as coined by McMullen~\cite{Curt,Bilu}. 
Moreover, we prove that there are no
additional compact
hyperbolic triangle groups which are conjugate to
subgroups of \(\PSL_2(L)\) for \emph{any} totally real field \(L\).
This answers a
question first raised by Waterman and Machlachlan in~\cite{WM}.
These questions were also recently
studied by McMullen~\cite{Curt,Bilu}, who raised five
 (interrelated)
 conjectures concerning the Hilbert Series;
we prove all of these conjectures.
\end{abstract}

\maketitle

{\footnotesize
\setcounter{tocdepth}{1}
\tableofcontents
}

\section{Introduction}

\subsection{The main results} 

The study of the triangle groups 
\[\Delta = \Delta(p,q,r) = \langle  x,y | x^p, y^q, (xy)^r \rangle\]
dates back to the 19th century, beginning with the work of 
Schwarz~\cite{Schwarz} and Poincar\'{e}~\cite{MR1554584}
(with respect to complex uniformization) and later by
Fricke~\cite{Fricke} from a more arithmetic perspective. 
If the parameters \((p,q,r)\) satisfy the inequality
\begin{equation}
\label{hyperbolic}
\frac{1}{p} + \frac{1}{q} + \frac{1}{r} < 1,
\end{equation}
Then \(\Delta\) is isomorphic to a cocompact subgroup of \(\PSL_2(\R)\).
This representation corresponds to a tessellation of the upper half
plane \(\mathbf{H}\) by hyperbolic triangles with angles \(\pi/p\), \(\pi/q\), and \(\pi/r\).
It is well known (see~\cite[Prop~1]{Takeuchi})
that the embedding~\(\Delta \hookrightarrow \PSL_2(\R)\) 
is unique up to conjugation and (as a consequence)  is isomorphic to a subgroup of~\(\PSL_2(L)\)
for some number field \(L \hookrightarrow \R\); such a map
is given explicitly in~\cite[Equation (2.7)]{ClarkVoight}.
The field~\(L\) necessarily contains
the  trace field~\cite[\S4.9]{Reid}, \cite{Neumann}:
\begin{equation} \label{tracefield}
K = \Q(\cos(2 \pi/p), \cos(2 \pi/q), \cos(2 \pi/r)).
\end{equation}
If \(\gamma \in \PSL_2(\R)\), then the trace of any lift
to \(\SL_2(\R)\) is well-defined up to sign, and \(K\)
is the field generated by generated by these traces.
In~\cite{Fricke}, Fricke studied the \((2,3,7)\) triangle group in detail:
\begin{center}
\begin{figure}[H]
  \includegraphics[width=3.5cm]{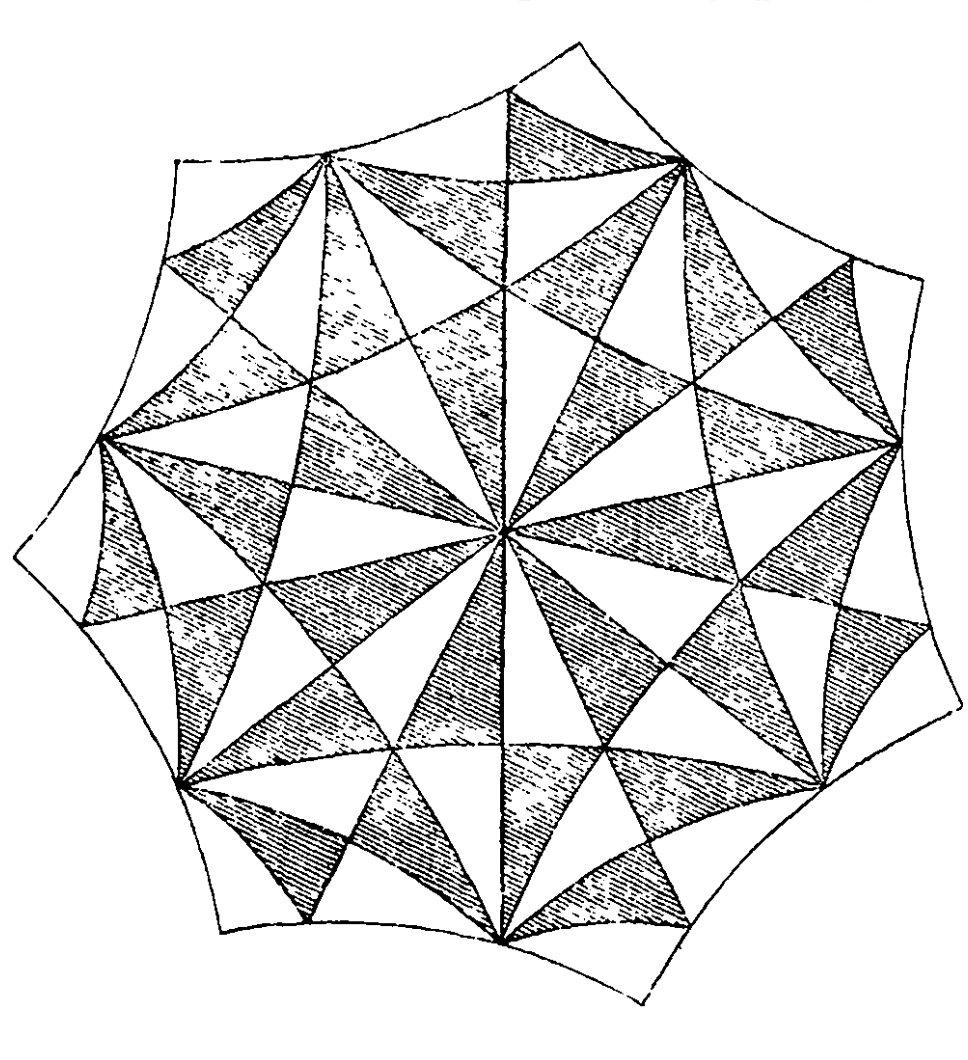}
  \caption{A (partial) tiling of hyperbolic space by \((2,3,7)\) triangles,
  taken from \cite[Fig~2]{Fricke}.}
  \label{fig:Fricke}
  \end{figure}
\end{center}
The field~\eqref{tracefield} specializes in this case to
\(K = \Q(\cos(2 \pi/7))\).
Fricke writes down an explicit representation of \(\Delta\) --- 
not over \(K\) but rather the
(non-Galois) quadratic extension \(L = K(j)\) of signature \((2,2)\)
 where \(j\) is
as follows  (see~\cite[Eq~(1)]{Fricke}):
\begin{equation} \label{frickequadratic}
j = \sqrt{e^{\tfrac{2 i \pi}{7}}
+ e^{-\tfrac{2 i \pi}{7}} - 1}
\end{equation}
This construction reflects the following: the group \(\Delta = \Delta(2,3,7)\)
is an arithmetic group corresponding to the quaternion
algebra \(B/K\) ramified at precisely two of the three real places (and no
finite places).
In particular, there is  obstruction to the existence
of an embedding \(\Delta \rightarrow \PSL_2(K)\) measured by \(B\),
and a representation to \(L/K\) exists if and only if \(L\) splits \(B\),
as occurs for Fricke's quadratic extension \(L = K(j)\) given 
in~\eqref{frickequadratic}.
The situation for  all compact arithmetic triangle groups is very
similar --- they correspond to quaternion algebras \(B\) 
over 
ramified
at all but one real place, and hence they never split over a totally
real field unless the corresponding base field
 is \(\Q\).\footnote{The base field may be smaller
 than \(K\) and coincides with the
 invariant trace field. In particular, in the notation
 introduced later, the corresponding
 quaternion algebra and field is \(B^{(2)}/K^{(2)}\).}
 On the other hand, 
 a well-known theorem of~\cite[Thm~3]{Takeuchi} implies
 that there only exist \(76\) cocompact
 hyperbolic triangle groups which are arithmetic.
This leads to the following natural question, which we answer
in this paper:
\begin{iprob} \label{WM} When is \(\Delta(p,q,r)\) isomorphic to a subgroup
of \(\PSL_2(K)\)?
\end{iprob}
This question was first considered by Waterman and Machlachlan in~\cite{WM},
where it was proved that Problem~\ref{WM}  has a negative answer
for all but finitely many 
compact triangle groups.\footnote{For the non-compact triangle groups
the situation is much simpler; there always exists a representation
over the corresponding trace field, see~\cite[Thm~1]{WM}.
This is because the existence
of parabolic elements in~\(\Delta\) forces the corresponding quaternion
algebra to split, cf.~\cite{Neumann}.}
 In fact, they prove the stronger statement
that only finitely many triangle groups are subgroups of \(\PSL_2(L)\)
for \emph{any} totally real finite extension \(L/K\).  Note that by construction,
they \emph{are} subgroups of \(\PSL_2(L)\) for some \(L/K\) with at least 
\emph{one} real embedding, but that is  much weaker than demanding
that \(L\) is totally real.

In this paper, we will give a complete answer to Question~\ref{WM};
there are no further examples beyond the ones found in~\cite{WM}.
This question has also recently been considered by McMullen~\cite{Bilu,Curt}
in relation to compact geodesic curves on Hilbert modular varieties. 
Before stating our main results, we introduce some notation
for the quaternion
algebras associated to \(\Delta\) and to its commensurability class.
The quaternion
algebras govern the fields \(L\) such that \(\Delta\) and its
finite order subgroups admit a representation to \(\PSL_2(L)\).
Let \(\Delta^{(2)} = \langle g^2 : g \in \Delta \rangle \) and let \(K^{(2)}\)
be the trace field of \(\Delta^{(2)}\). The field \(K^{(2)}\)
is the \emph{invariant} trace field of \(\Delta\) --- it is an invariant
of the commensurability class of \(\Delta\) (See~\cite{Reidoriginal,Reid}).
We have \([K:K^{(2)}] = 2^e\) where \(e = 2,1,0\) 
depending on whether three of the \((p,q,r)\) are even,
exactly two are even, or at most one is even respectively
(See~\cite{Takeuchi} and also~\cite{Curt},  where
 \(\Delta^{(2)}\) is denoted by \(\Delta_0\)).
The groups \(\Delta\) and  \(\Delta^{(2)}\) canonically admit
representations to \(\PSL(B)\) and \(\PSL(B^{(2)})\) respectively,
where \(B\)
and  \(B^{(2)}\) are the
associated quaternion algebras over \(K\) and \(K^{(2)}\).
The quaternion algebra of \(\Gamma\) is given explicitly
 by \(\Q[\widetilde{\Gamma}]\)
where \(\widetilde{\Gamma}\) is the pre-image of \(\Gamma\) in \(\SL_2(\R)\).
The representations of these groups are defined over
the respective trace fields if and only if the corresponding
 quaternion algebras split, and one obtains
 representations over some totally real extension \(L/K\) if and only
 if the quaternion algebras split at all real places.
 Thus question~\ref{WM} is equivalent to asking when \(B/K\)
 is split at all real places.
 (The ``opposite'' question  of understanding the triangle
 groups for which \(B/K\) is  \emph{ramified} at all but a fixed number
 of real
  places was considered in~\cite{VV}, following~\cite{Takeuchi}.)
In~\cite{Curt}, McMullen introduces the Hilbert Series consisting
of the following eleven hyperbolic triangle groups:

\begin{df}[McMullen, \cite{Curt}] The Hilbert Series consists
of the triangle groups \(\Delta\) with \((p,q,r)\) taken
from the following list:
\[\begin{aligned}
(2, 4, 6), (2, 6, 6), (3, 4, 4), (3, 6, 6), (2, 6, 10), (3, 10, 10), \\
(5, 6, 6), (6, 10, 15), (4, 6, 12), (6, 9, 18), \ \text{and} \  (14, 21, 42).
\end{aligned}
\]
\end{df}

(These groups were also identified in~\cite{WM}.)
McMullen then goes on to make the following five (related)
conjectures concerning the Hilbert Series for
hyperbolic triangle groups:

\begin{iconj}{\cite[Conjecture~1.3]{Curt},
\cite[Conjecture~1.15]{Bilu}} \label{conjA}
The quaternion algebra \(B_v\)  is split at all infinite places \(v\)
of \(K\) if and only if \(\Delta\)
 belongs to the Hilbert series.
 \end{iconj}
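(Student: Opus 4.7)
My plan is to characterize the splitting of $B$ at each real place of $K$ via an explicit trigonometric inequality, parametrize real places through the Galois theory of cyclotomic fields, and then enumerate the triples satisfying the inequality at every place.

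\emph{Splitting criterion.} Taking lifts $\widetilde{x}, \widetilde{y} \in \SL_2(\R)$ of the standard generators of $\Delta$, one has $B = K\langle \widetilde{x}, \widetilde{y}\rangle$. A Hilbert-symbol computation expresses the Brauer class of $B$ (up to a normalizing positive factor) as $(\delta, -1)$, where
\[\delta = \cos^2(\pi/p) + \cos^2(\pi/q) + \cos^2(\pi/r) + 2\cos(\pi/p)\cos(\pi/q)\cos(\pi/r) - 1 \in K.\]
Hence $B$ splits at a real place $v$ iff $v(\delta) \ge 0$. Geometrically, this asserts that the ``Galois-conjugate triangle'' at $v$---with angles $\pi a_p/p, \pi a_q/q, \pi a_r/r$ given by the embedding---is hyperbolic or Euclidean rather than spherical; at the identity place this is simply~\eqref{hyperbolic}.

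\emph{Galois parametrization.} Set $N = \lcm(p,q,r)$, so $K \subset \Q(\zeta_{2N})^+$. Real places of $K$ correspond bijectively to $\Gal(K/\Q) = (\Z/2N)^\times/(\{\pm 1\}\cdot H)$, where $H$ is the stabilizer of $(\cos(\pi/p), \cos(\pi/q), \cos(\pi/r))$. For $a \in (\Z/2N)^\times$, the corresponding embedding sends $\cos(\pi/n) \mapsto \cos(\pi a/n)$, with $a$ reduced modulo $2n$. By the Chinese Remainder Theorem, essentially every triple $(a_p, a_q, a_r) \in \prod_n (\Z/2n)^\times/\{\pm 1\}$ arises from some $a$, up to a mild coupling on the $2$-parts when several of $p, q, r$ are even.

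\emph{Sufficiency, necessity, and main obstacle.} For each of the 11 Hilbert Series triples, $K$ has small finite degree and $v(\delta) \ge 0$ at every real place by direct computation (essentially done in~\cite{WM, Curt}). Conversely, if $(p,q,r)$ is hyperbolic and outside the Hilbert Series, the Waterman--Maclachlan bound~\cite{WM} restricts attention to a finite explicit list of candidates. For each candidate I exhibit $a \in (\Z/2N)^\times$ so that the conjugate angles $\pi a_n/n \in (0, \pi)$ sum to more than $\pi$, forcing $v(\delta) < 0$. The key lever is that for $n \ge 7$, $(\Z/2n)^\times$ contains elements pushing $a/n$ close to $1/2$; with CRT freedom one can simultaneously achieve $a_n/n > 1/3$ across the three $n$'s, producing a spherical conjugate triangle. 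The hardest sub-cases are those where $p, q, r$ share common factors and CRT freedom is constrained, requiring delicate case-by-case inspection together with careful tracking of the sign of the triple-product term in $\delta$ (which depends on lift conventions and can flip between conjugates); this is where the bulk of the effort lies.
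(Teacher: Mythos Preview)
Your reduction to the sign of a Galois-conjugated discriminant is correct and matches the paper's setup: showing that $\Delta$ is \emph{not} in the Hilbert series amounts to exhibiting some $k$ coprime to $n=[2,p,q,r]$ with $d(k)\ge 0$, equivalently $|k\v-\Lambda|\ge 1$.

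The gap is in ``the Waterman--Maclachlan bound restricts attention to a finite explicit list of candidates.'' That bound is explicit but astronomically large; the paper stresses this point (and the analogy with the Davenport--Schinzel bound of order $10^{10}$ for Robinson's problem). You cannot simply enumerate the WM list and find a bad place for each entry. The entire content of the paper is to replace this infeasible enumeration with a structural argument. Your final paragraph gestures at the right intuitions---push each $a_n/n$ near $1/2$, worry about shared factors---but treats them as ``delicate case-by-case inspection'' inside the WM range, whereas in fact they must be developed into a standalone proof that works for \emph{all} triples outside a genuinely small computable range.

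Concretely, the paper does three things your outline lacks. First, it proves (via Fourier moments of $(e(t)-24)^{12}$) that for \emph{every} rational line there is a real $t$ with $|t\v-\Lambda|\ge 1+\tfrac{1}{5}$; this uniform margin is what makes the problem tractable. Second, it converts the real $t$ to an integer $k$ coprime to $n$ via sharp bounds on the Jacobsthal function $J(n)$ (Kanold and the tables of~\cite{MR2945166}), which is exactly how one quantifies ``$(\Z/2n)^\times$ contains elements close to $1/2$.'' Third, when $\min(p,q,r)$ is small relative to $n$---precisely your ``shared factors'' case where CRT freedom collapses---it uses a twisting technique (Lemma~\ref{sturdy}) to move one coordinate while fixing the others, combined with a two-dimensional analysis, to reduce to a loop over a few million triples rather than the WM range. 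Your remark that the coupling on the $2$-parts is ``mild'' understates the difficulty: the shared-factor regime is where almost all of the work lies, and handling it requires these tools, not inspection.
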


\begin{iconj}{\cite[Conjecture~1.4]{Curt}}  \label{conjB}
The quaternion algebra \(B\) is split 
if and only if \(\Delta\)
 belongs to the Hilbert series.
 \end{iconj}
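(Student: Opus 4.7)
My plan is to derive Conjecture B from Conjecture A combined with a finite case-by-case verification. The ``only if'' direction is immediate: if $\Delta$ is not in the Hilbert series, then by (the already established) Conjecture A, $B_v$ is ramified at some infinite place of $K$, so $B$ cannot be globally split.

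For the converse, I need to verify, for each of the $11$ triangles in the Hilbert series, that $B \cong M_2(K)$. My approach is explicit case-by-case computation. Using Takeuchi's construction one presents $B = \Q[\widetilde{\Delta}]$ as a Hilbert symbol $\bigl(\tfrac{a,b}{K}\bigr)$ whose entries $a, b$ are explicit polynomials in the trace-field generators $2\cos(\pi/p), 2\cos(\pi/q), 2\cos(\pi/r)$ coming from a standard matrix realization $\widetilde{\Delta} \subset \SL_2(\R)$. By the (easy) ``if'' half of Conjecture A, $B$ is already known to split at every real place of $K$; Hilbert reciprocity then constrains the finite ramified set to have even cardinality, and in any event the local Hasse invariant can be nontrivial only at the finite primes of $K$ dividing $2ab$. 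For each of the $11$ cases this reduces triviality to a short finite check. Alternatively---and often more efficiently---one can exhibit an explicit realization $\Delta \hookrightarrow \PSL_2(\OL_K)$ coming from a Hilbert modular structure, which immediately forces $B \cong M_2(K)$. A third route is to pass to the invariant order: since $B = B^{(2)} \otimes_{K^{(2)}} K$, it suffices to prove triviality of $B^{(2)}$ over the smaller field $K^{(2)}$, which is often a cleaner computation.

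The main obstacle is the bookkeeping for the larger examples, particularly $(14,21,42)$, $(6,9,18)$, and $(6,10,15)$, where the trace field has moderate degree over $\Q$ and more primes must be inspected. These remain entirely finite computations, and Hilbert reciprocity serves as a built-in sanity check on the outcome.
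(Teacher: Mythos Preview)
Your proposal is correct and follows the same route as the paper. The paper's argument for Conjecture~B is simply to prove Conjecture~A (which is the content of Theorem~\ref{maintheorem}) and then invoke the reduction ``as established in~\cite{Curt}, Conjectures~\ref{conjB}, \ref{conjC}, \ref{conjD}, and~\ref{conjE} all follow from Conjecture~\ref{conjA}''; your sketch spells out exactly what that reduction amounts to --- the ``only if'' direction is immediate from~A, and the ``if'' direction is the finite verification for the eleven Hilbert-series triples already carried out in~\cite{WM} and~\cite{Curt}.
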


\begin{iconj}{\cite[Conjecture~1.5]{Curt}}  \label{conjC}
The quaternion algebra \(B^{(2)}\) is split
if and only if \(\Delta\) is conjugate to \(\Delta(14,21,42)\).
 \end{iconj}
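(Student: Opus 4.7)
The plan is to reduce Conjecture~\ref{conjC} to a finite case check using Conjecture~\ref{conjB} as the essential input. Since $B = B^{(2)}\otimes_{K^{(2)}}K$, if $B^{(2)}$ is split then so is $B$, and by Conjecture~\ref{conjB} this forces $\Delta$ to belong to the Hilbert Series. It therefore suffices to check, for each of the eleven groups in the Hilbert Series, whether $B^{(2)}$ is split over $K^{(2)}$.

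For each group, I would compute $K^{(2)}$ explicitly and write down a Hilbert symbol for $B^{(2)}$, starting from the standard matrix presentation of $\Delta$ and the traces of squared generators. By Conjecture~\ref{conjA}, $B$ is split at every infinite place of $K$, and since each real place of $K^{(2)}$ lies below some real place of $K$, this automatically forces $B^{(2)}$ to be split at every infinite place of $K^{(2)}$. Thus only finite-place ramification can obstruct splitness. Moreover, any finite prime $v$ of $K^{(2)}$ ramifying in $B^{(2)}$ must have the property that every place $w$ of $K$ above $v$ has even local degree $[K_w:K^{(2)}_v]$; otherwise the invariant $1/2$ would survive in $B$, contradicting Conjecture~\ref{conjB}. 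This confines the candidate primes to a short list determined by $K/K^{(2)}$ (which has degree $2$ or $4$ in the Hilbert Series cases).

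The remaining task is the local Hilbert-symbol computation, group by group. For the ten groups other than $\Delta(14,21,42)$, I would exhibit a specific finite prime of $K^{(2)}$ (typically above $2$, $3$, $5$, or $7$) at which $B^{(2)}$ is ramified, obstructing splitness. For $\Delta(14,21,42)$ --- where $K^{(2)}$ is the degree-three field $\mathbf{Q}(\cos(2\pi/7))$ --- I would verify that no candidate prime is actually ramified in $B^{(2)}$, yielding an explicit isomorphism $B^{(2)}\cong M_2(K^{(2)})$. The main obstacle is the careful local computation at the small primes where $K/K^{(2)}$ is non-split; this is made tractable by the small degrees of the fields in the Hilbert Series and by the stringent constraint that Conjecture~\ref{conjB} places on which primes can possibly ramify in $B^{(2)}$.
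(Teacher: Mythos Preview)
Your proposal is correct and follows essentially the same route as the paper. The paper itself does not give a self-contained proof of Conjecture~\ref{conjC}; it proves Conjecture~\ref{conjA} (equivalently Theorem~\ref{maintheorem}) and then invokes the implication ``Conjecture~\ref{conjA} $\Rightarrow$ Conjectures~\ref{conjB},~\ref{conjC},~\ref{conjD},~\ref{conjE}'' established in McMullen's paper~\cite{Curt}. Your sketch is a reasonable unpacking of how that implication goes for Conjecture~\ref{conjC}: the identity $B \cong B^{(2)}\otimes_{K^{(2)}}K$ reduces the problem via Conjecture~\ref{conjB} to the eleven Hilbert Series groups, and then one checks the local invariants of $B^{(2)}$ for each. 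Your observation that any finite prime ramifying in $B^{(2)}$ must be inert (or otherwise have all residue degrees even) in $K/K^{(2)}$, since $B$ itself is split by Conjecture~\ref{conjB}, is exactly the right constraint to make the finite check short.
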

 
 \begin{iconj}{\cite[Conjecture~1.6]{Curt}}  \label{conjD}
 A cocompact triangle group \(\Delta\) has a model
 over a totally real field if and only if \(\Delta\) belongs
 to the Hilbert series.
 \end{iconj}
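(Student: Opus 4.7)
The plan is to deduce Conjecture~\ref{conjD} from Conjecture~\ref{conjA} together with the paper's preceding observation: $\Delta$ admits a representation into $\PSL_2(L)$ for some totally real extension $L/K$ if and only if the quaternion algebra $B/K$ splits at every real place of $K$. This makes Conjecture~\ref{conjD} logically equivalent to Conjecture~\ref{conjA}, so the entire problem reduces to controlling the ramification of $B$ at the archimedean places of $K$.

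The ``if'' direction---each of the eleven Hilbert Series groups admits a totally real model---is handled by explicit verification. For every triple $(p,q,r)$ on the list one uses the canonical matrix presentation of $\Delta$ over $K$ (adjoined a square root, as in~\cite{ClarkVoight}) to produce an explicit Hilbert symbol $B \cong \left(\tfrac{a,b}{K}\right)$ with $a,b$ polynomial in $\cos(2\pi/p),\cos(2\pi/q),\cos(2\pi/r)$, and then checks that $(a,b)_v = +1$ at every real place $v$ of $K$. These eleven cases already appear in~\cite{WM}, so this direction is essentially a citation.

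For the converse, one must show that if $(p,q,r)$ is hyperbolic and not in the Hilbert Series, then $B$ is ramified at some real place of $K$. The key observation is Galois-theoretic: the real places of $K$ correspond (modulo symmetries) to triples $(k_1,k_2,k_3)$ with $\gcd(k_i,2p),\gcd(k_2,2q),\gcd(k_3,2r)$ trivial, implemented by the Galois conjugates $\cos(2\pi/p)\mapsto\cos(2\pi k_1/p)$ and likewise for $q,r$. At such a place, the local invariant of $B$ is computable from the signs of the corresponding trace data and is trivial precisely when the ``Galois-conjugate triangle'' with angles $(\pi k_1/p,\pi k_2/q,\pi k_3/r)$ is again hyperbolic (angle sum $<\pi$, with the right sign condition). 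Thus the task reduces to exhibiting, for every $(p,q,r)$ outside the Hilbert list, some Galois conjugate $(k_1,k_2,k_3)$ producing a spherical or degenerate triangle, forcing $B$ to ramify at that real place.

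The main obstacle is executing this last step uniformly over $(p,q,r)$. For sufficiently large parameters a soft counting argument suffices: the number of real places of $K$ is on the order of $\tfrac12\varphi(\lcm(2p,2q,2r))$, which grows much faster than the number of hyperbolic conjugate triples, and hyperbolicity of every conjugate is an extremely rigid condition which quickly becomes impossible. The delicate regime is the intermediate range where this counting is inconclusive but the parameters are still too large to enumerate by hand---here a careful but finite case analysis is needed, along with bookkeeping relating $B/K$ to the invariant quaternion algebra $B^{(2)}/K^{(2)}$ whenever at least one of $p,q,r$ is even, since the extension $K/K^{(2)}$ (of degree $2$ or $4$) can convert a real place of $K^{(2)}$ into a pair of complex places of $K$, thereby changing which embeddings actually need to be checked.
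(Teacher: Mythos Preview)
Your reduction of Conjecture~\ref{conjD} to Conjecture~\ref{conjA} is correct and matches the paper (which cites~\cite{Curt} for this implication), and the ``if'' direction is indeed a finite verification already carried out in~\cite{WM} and~\cite{Curt}.

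The gap is in your proposed proof of the hard direction of Conjecture~\ref{conjA}. First, a minor point: the real places of $K$ are parametrized by a \emph{single} $k\in(\Z/n\Z)^\times/\{\pm1\}$ with $n=[2,p,q,r]$ (since $K\subset\Q(\zeta_n)$), not by independent triples $(k_1,k_2,k_3)$; your $k_i$ are just the reductions of one $k$ modulo $p,q,r$. More seriously, your ``soft counting argument'' does not work as stated. The set of $k$ for which the conjugate triangle remains hyperbolic has \emph{positive density} in $(\Z/n\Z)^\times$ --- it does not grow slower than $\varphi(n)$ --- so there is no counting dichotomy of the kind you describe. What \emph{is} true is that the complementary set (non-hyperbolic conjugates) also has positive density by equidistribution, hence is nonempty for large $n$; but this is exactly the ineffective argument of~\cite{WM} and~\cite{Bilu}, and it gives no usable bound on the ``intermediate range'' you allude to. The paper's entire contribution is making this effective: Theorem~\ref{5lemma} (proved by Fourier analysis on $\R^3/\Z^3$) produces a \emph{real} $t$ with $|t\v-\Lambda|\ge 6/5$, and then tight bounds on the Jacobsthal function (Lemma~\ref{better}) allow one to perturb $t$ to an integer $k$ coprime to $n$ while losing at most $1/5$; the regime where $\min(p,q,r)$ is small requires a separate, computationally intensive argument (Lemma~\ref{mediumplus}). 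None of this machinery appears in your plan. Finally, your remark about real places of $K^{(2)}$ becoming complex in $K$ is incorrect: both $K$ and $K^{(2)}$ are generated over $\Q$ by cosines of rational multiples of $\pi$ and are therefore totally real.
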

 
 \begin{iconj}{\cite[Conjecture~1.8]{Curt}} \label{conjE}
 A finite cover of~\(\mathbf{H}/\Delta\)  can be presented as
a Kobayashi geodesic curve on a Hilbert modular variety
if and only if \(\Delta\) belongs to the Hilbert series.
 \end{iconj}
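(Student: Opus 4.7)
The plan is to deduce Conjecture~\ref{conjE} from Conjecture~\ref{conjD}, using the dictionary (developed by McMullen in the works cited) between Kobayashi geodesic curves in Hilbert modular varieties and Fuchsian representations over totally real fields. I expect Conjectures~\ref{conjA}--\ref{conjD} to be proved first via a direct arithmetic analysis of when the quaternion algebra $B/K$ splits at every real place of $K$ (this is where the eleven exceptional parameters emerge); once Conjecture~\ref{conjD} is in hand, Conjecture~\ref{conjE} is essentially a translation between the arithmetic and geometric sides of the same picture.

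For the forward implication, assume $\Delta$ lies in the Hilbert Series. By Conjecture~\ref{conjD} there is a representation $\rho \colon \Delta \to \PSL_2(L)$ with $L/K$ a totally real extension of degree $n$. The $n$ real embeddings $\sigma_1,\ldots,\sigma_n$ of $L$ produce a homomorphism $\Delta \to \PSL_2(\R)^n$ acting on $\mathbf{H}^n$, with $\sigma_1 \circ \rho$ being (conjugate to) the canonical Fuchsian embedding. After replacing $\Delta$ by a torsion-free finite-index subgroup $\Delta'$ (Selberg's lemma) and conjugating the image into a congruence subgroup of $\PSL_2(\OL_L)$, the induced map from the diagonal $\mathbf{H} \hookrightarrow \mathbf{H}^n$ descends to a holomorphic curve $\mathbf{H}/\Delta' \hookrightarrow \mathbf{H}^n/\Gamma_L$ in the Hilbert modular variety attached to $L$. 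Such a twisted-diagonal image is totally geodesic for the Bergman metric and so, by the standard comparison, is a Kobayashi geodesic curve.

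For the converse, assume a finite cover $\mathbf{H}/\Delta'$ of $\mathbf{H}/\Delta$ embeds as a Kobayashi geodesic curve in a Hilbert modular variety $X_L = \mathbf{H}^n/\Gamma_L$ for some totally real field $L$ of degree $n$. The rigidity theorem for Kobayashi curves on Hilbert modular varieties (the crucial geometric input, available in McMullen's work) forces the lift $\mathbf{H} \hookrightarrow \mathbf{H}^n$ to be a twisted diagonal $z \mapsto (z,\tau_2(z),\ldots,\tau_n(z))$, and forces the joint Möbius action of $\Delta'$ to factor through $\PSL_2(L)$ via the real embeddings of $L$. This provides $\Delta'$ with a model over the totally real field $L$; since the invariant quaternion algebra $B^{(2)}/K^{(2)}$ is a commensurability invariant and its splitting at all real places is equivalent to the existence of a totally real model, this model extends to one for $\Delta$ itself. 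Conjecture~\ref{conjD} then places $\Delta$ in the Hilbert Series.

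The principal obstacle in this plan is not the algebraic manipulation but the invocation of the rigidity classification of Kobayashi geodesic curves in Hilbert modular varieties in the converse direction: one must know that \emph{every} such curve comes from a twisted-diagonal Fuchsian representation over a totally real field, ruling out any exotic construction. This rigidity statement converts the geometric Conjecture~\ref{conjE} directly into the arithmetic Conjecture~\ref{conjD}; with it, and with Conjecture~\ref{conjD} proved, the rest is a routine commensurability argument, since whether $B/K$ (equivalently $B^{(2)}/K^{(2)}$) splits at every real place is an invariant of the commensurability class of~$\Delta$.
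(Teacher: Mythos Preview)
Your proposal is correct and follows essentially the same route as the paper: the paper proves Conjecture~\ref{conjA} directly and then simply cites McMullen's work \cite{Curt} for the implication from Conjecture~\ref{conjA} to Conjectures~\ref{conjB}--\ref{conjE}, whereas you unpack that implication by sketching the twisted-diagonal construction and its rigidity converse. The only minor point worth tightening is the passage from a totally real model for the finite-index subgroup~$\Delta'$ back to one for~$\Delta$: you should make explicit that $B = B^{(2)} \otimes_{K^{(2)}} K$ and that $K/K^{(2)}$ is totally real, so that splitting of $B^{(2)}$ at all real places of $K^{(2)}$ forces splitting of $B$ at all real places of $K$, which is what Conjecture~\ref{conjA} (and hence~\ref{conjD}) requires.
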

 
 Conjecture~\ref{conjB} is equivalent to the claim
 that the answer to Problem~\ref{WM} is positive if and only if
 \(\Delta\) belongs to the Hilbert series.
 Conjecture~\ref{conjA} is equivalent to the claim
 that \(\Delta \subset \PSL_2(L)\) for some totally real field~\(L\)
 if and only if \(\Delta\) belongs to the Hilbert series.
The main theorem of this paper is as follows:
 
 \begin{ithm} \label{BBB} Conjectures~\ref{conjA}, \ref{conjB}, \ref{conjC}, \ref{conjD}, 
 and~\ref{conjE}
 are all true.
 \end{ithm}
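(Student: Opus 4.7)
The strategy is to reduce the five conjectures to arithmetic statements about the quaternion algebras $B/K$ and $B^{(2)}/K^{(2)}$ associated to $\Delta$, and then to carry out a two-stage argument: verify the eleven Hilbert Series triples by direct computation, and rule out all other hyperbolic triples by an effective construction. The equivalence of Conjectures~\ref{conjA} and~\ref{conjD} follows from standard quaternion algebra theory, as any finite-place ramification of $B$ can be absorbed by a suitable totally real extension $L/K$, while any real ramification obstructs every totally real splitting field. Conjecture~\ref{conjE} then follows from Conjecture~\ref{conjD} by invoking McMullen's characterization~\cite{Curt} of Kobayashi curves on Hilbert modular varieties in terms of totally real models. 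The substantive content therefore lies in Conjectures~\ref{conjA},~\ref{conjB}, and~\ref{conjC}.

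The concrete reformulation uses that the real places of $K$ are parametrized by $(\Z/N)^\times$ with $N = \lcm(p,q,r)$, modulo $\pm 1$ and the subgroup fixing $K$ pointwise. At the real place corresponding to $\sigma_t$, the localization $B_v$ is split if and only if the Schwarz triangle with angles $\pi t'_p/p,\ \pi t'_q/q,\ \pi t'_r/r$ is hyperbolic, where $t'_n = \min(t \bmod n,\, n - t \bmod n)$. In these terms Conjecture~\ref{conjA} becomes the assertion that
\[
S(t) \;:=\; \frac{t'_p}{p} + \frac{t'_q}{q} + \frac{t'_r}{r} \;<\; 1 \qquad \text{for every } t \in (\Z/N)^\times
\]
if and only if $(p,q,r)$ belongs to the Hilbert Series. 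The forward direction is a finite verification across the eleven triples, each of which has $[K:\Q] \le 6$; the tightest case is $(14,21,42)$, whose Galois orbit attains $S(11) = 40/42$. Conjectures~\ref{conjB} and~\ref{conjC} then follow by further Hilbert-symbol computations at the finitely many finite places of $K$ or $K^{(2)}$ where $B$ or $B^{(2)}$ could ramify, isolating $(14,21,42)$ as the unique triple for which $B^{(2)}$ is split.

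The core of the paper is the converse: for every hyperbolic triple $(p,q,r)$ outside the Hilbert Series, one must exhibit an explicit $t \in (\Z/N)^\times$ with $S(t) > 1$. The plan is to prove an effective bound on $N$ beyond which such a $t$ is always guaranteed, reducing the problem to a finite enumeration. The construction of bad $t$ proceeds via the Chinese Remainder Theorem, prescribing residues of $t$ close to $p/2, q/2, r/2$ modulo $p, q, r$ respectively, subject to the coprimality constraint $\gcd(t,N) = 1$. Particular care is required when one of $p, q, r$ is small (forcing a fixed contribution of $1/2$ or $1/3$ to $S(t)$), when $N$ has many small prime factors (constraining the admissible $t$), or when $[K:\Q]$ is small (shrinking the Galois orbit and hence the search space).

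The principal obstacle is the sharpness of this reduction. Because $(14,21,42)$ attains $S(t) = 40/42$, the threshold beyond which a spherical Galois conjugate must exist is genuinely delicate, and a naive equidistribution argument would leave too many cases to enumerate by hand. The proof must therefore combine a coarse bound handling most large triples with a more refined analysis---incorporating the prime factorizations of $p,q,r$ and the interaction between the coprimality constraint and the target residues---to handle the critical regime close to the Hilbert Series. The resulting finite enumeration of the small-$N$ cases is substantial but mechanical, and its completion establishes all five conjectures simultaneously.
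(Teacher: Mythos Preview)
Your high-level skeleton is right: reduce to Conjecture~\ref{conjA} (the paper does this in a single sentence by citing~\cite{Curt} for the implications $A \Rightarrow B,C,D,E$, so your separate treatment of~\ref{conjB} and~\ref{conjC} is unnecessary), check the eleven Hilbert triples directly, and for every other hyperbolic $(p,q,r)$ exhibit a Galois conjugate at which $B$ ramifies. One correction to your reformulation: the relevant criterion is $|k\v - \Lambda| \ge 1$ for the index-two sublattice $\Lambda \subset \Z^3$, not the condition $S(t) \ge 1$ which amounts to $|k\v - \Z^3| \ge 1$. The $\Lambda$ version is the one equivalent to $d(k) \ge 0$, and it is strictly easier to satisfy; this extra slack matters.

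The real gap is in the construction. Prescribing residues of $t$ near $p/2,\,q/2,\,r/2$ ``via the Chinese Remainder Theorem'' does not work as written, because $p,q,r$ typically share prime factors and there is no CRT decomposition to invoke; the coprimality constraint $(t,N)=1$ is precisely what blocks a naive patching. You correctly diagnose that a soft equidistribution bound leaves far too many cases, but ``a coarse bound combined with a more refined analysis'' names no actual mechanism, and it is the mechanism that is the content of the paper.

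The paper's mechanism rests on two tools absent from your outline. The first is a Fourier-analytic margin lemma (Theorem~\ref{5lemma}): for \emph{every} nonzero rational triple there exists a \emph{real} $t$ with $|t\v - \Lambda| \ge 1 + \tfrac{1}{5}$, proved by computing the moment $E[(e(t)-24)^{12}]$ of the proxy function $e(t)=64\sin^2(\pi t/p)\sin^2(\pi t/q)\sin^2(\pi t/r)$ and disposing of finitely many exceptional hyperplanes by hand. The second is the Jacobsthal function $J(n)$: one spends the margin $1/5$ to perturb the real $t$ to an integer $k$ coprime to $n=[2,p,q,r]$, at cost controlled by $J(n)$, and sharp tabulated upper bounds for $J(n)$ in terms of the number of prime factors of $n$ make this step effective whenever $\min(p,q,r)$ is not too small relative to $n$. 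The complementary regime, $\min(p,q,r)$ bounded, is handled by an inductive argument through one- and two-dimensional analogues together with a ``twisting'' lemma (Lemma~\ref{sturdy}) that does what your CRT step was meant to do, reducing everything to a finite computation that is actually feasible. Without the $1+\tfrac{1}{5}$ lemma and the Jacobsthal bounds there is no route from your heuristic to an effective finite check.
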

 

\begin{ithm} \label{more} Let \(\Delta\) be a hyperbolic triangle group,
let \(K\) be the corresponding invariant trace field with
integer ring \(\OL_K\).
The following are equivalent:
\begin{enumerate}
\item \(\Delta\) belongs to the Hilbert series.
\item There exists a faithful representation
\(\Delta \rightarrow \PSL_2(\OL_K)\).
\item There exists a faithful representation
\(\Delta \rightarrow \PSL_2(K)\).
\item There exists a faithful representation
\(\Delta \rightarrow \PSL_2(L),\)
where \(L\) is  totally real.
\end{enumerate}
\end{ithm}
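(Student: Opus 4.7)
The strategy is to deduce as much as possible from Theorem~\ref{BBB} and to handle the remaining integrality statement by a direct analysis of the eleven groups in the Hilbert series. The implications (2)$\Rightarrow$(3) and (3)$\Rightarrow$(4) are formal: the first holds because $\OL_K \subset K$, and the second because $K$ is totally real---it is generated by values $2\cos(2\pi/n)$ inside the maximal totally real cyclotomic subfield $\Q(\zeta_n)^+$---so one may simply take $L = K$. For (4)$\Rightarrow$(1), suppose $\rho\colon \Delta \to \PSL_2(L)$ is faithful with $L$ totally real. Then the trace field of $\Delta$ lies in $L$, so every real place $v$ of $K$ is dominated by a real place $w \mid v$ of $L$ with $L_w = K_v = \R$; composing $\rho$ with this embedding yields a representation $\Delta \to \PSL_2(\R)$ whose linearization identifies $B \otimes_K K_v$ with $M_2(\R)$ (the Hamiltonians admit no $\R$-algebra map into $M_2(\R)$). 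Hence $B$ splits at every real place of $K$, and Conjecture~\ref{conjA} (established in Theorem~\ref{BBB}) forces $\Delta$ to belong to the Hilbert series.

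The substantive direction is (1)$\Rightarrow$(2). By Conjecture~\ref{conjB} (also in Theorem~\ref{BBB}), when $\Delta$ is in the Hilbert series the quaternion algebra $B$ splits, so any isomorphism $B \cong M_2(K)$ furnishes an embedding $\Delta \hookrightarrow \PSL_2(K)$. To improve this to $\PSL_2(\OL_K)$, I would fix lifts $\widetilde{x}, \widetilde{y} \in \SL_2(K)$ of the order-$p$ and order-$q$ generators of $\Delta$; each satisfies a monic integral polynomial $X^2 \mp 2\cos(\pi/n) X + 1 \in \OL_K[X]$ and is thus integral over $\OL_K$. The $\OL_K$-subalgebra $R \subset M_2(K)$ they generate is therefore an $\OL_K$-order in $M_2(K)$, and the $R$-orbit of any nonzero vector $v \in K^2$ is a finitely generated, torsion-free $\OL_K$-submodule $\Lambda \subset K^2$ of rank $2$, hence projective. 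When $\Lambda$ is free, choosing a basis identifies $\mathrm{End}_{\OL_K}(\Lambda)$ with $M_2(\OL_K)$, and $\Delta$ lands in $\SL_2(\OL_K)$, yielding the desired embedding.

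The principal obstacle is the class-group issue: if no $\Delta$-stable free $\OL_K$-lattice exists in $K^2$, the argument above places $\Delta$ only into $\mathrm{End}_{\OL_K}(\Lambda)^\times / \{\pm 1\}$, which for non-free $\Lambda$ is not $\GL_2(K)$-conjugate to $\PSL_2(\OL_K)$ inside $\PSL_2(K)$. To conclude I would verify (1)$\Rightarrow$(2) case by case: the invariant trace fields appearing in the Hilbert series form a specific list of small totally real fields whose rings of integers and class groups can be examined directly, and for each of the eleven triangle groups one writes down explicit generators $\widetilde{x}, \widetilde{y} \in \SL_2(\OL_K)$ realising the canonical representation. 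The most delicate case is likely $(14,21,42)$, whose invariant trace field has the highest degree and whose matrix entries involve the most intricate algebraic integers.
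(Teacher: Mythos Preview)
Your outline is reasonable, but the paper does not argue this theorem at all: it simply records that Theorem~\ref{more} follows from Theorem~\ref{BBB} together with \cite[Thm~1.1]{Curt}. In other words, once Conjecture~\ref{conjA} is established, McMullen's paper already supplies the chain of equivalences, including the explicit integral models over~$\OL_K$ for each of the eleven Hilbert series groups. So the difference is that you are reconstructing from scratch what the paper is content to cite: your arguments for $(2)\Rightarrow(3)\Rightarrow(4)$ are the obvious ones, your $(4)\Rightarrow(1)$ is essentially the reduction of Conjecture~\ref{conjD} to Conjecture~\ref{conjA} carried out in~\cite{Curt}, and your proposed case-by-case check for $(1)\Rightarrow(2)$ is precisely the content of \cite[Thm~1.1]{Curt}. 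The upshot is that you have correctly identified the structure of the proof and the one genuinely nontrivial step (the integrality with its class-group obstruction), but you have not actually carried that step out; the paper avoids this by pointing to where it has already been done.

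Two smaller remarks. First, in your $(4)\Rightarrow(1)$ argument, the quaternion algebra generated by the given faithful representation~$\rho$ need not be~$B$ itself but only a Galois conjugate of it (the character variety of~$\Delta$ is zero-dimensional and the faithful points form a single Galois orbit); since $K$ is abelian over~$\Q$ and Galois permutes the real places, this still forces~$B$ to split at every real place, so your conclusion stands. Second, the statement of the theorem says ``invariant trace field,'' but your argument (and indeed the truth of the theorem) requires~$K$ to be the \emph{trace} field: for instance $\Delta(2,4,6)$ has invariant trace field~$\Q$, yet it contains an element of order~$4$ and hence does not embed in~$\PSL_2(\Z)$. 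You have implicitly read~$K$ as the trace field, which is the correct interpretation.
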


As established in~\cite{Curt}, Conjectures~\ref{conjB}, \ref{conjC}, \ref{conjD},
and~\ref{conjE} all follow
from Conjecture~\ref{conjA}.  
 Theorem~\ref{more} follows
from Theorem~\ref{BBB} together with~\cite[Thm 1.1]{Curt}.
As mentioned above, these results were previously known to hold \emph{up to finitely many
exceptions} by~\cite{WM}.  The argument in~\cite{WM} also gives,
in principle, an explicit (but huge) upper bound
on any counterexample. A new but softer proof of this finiteness result was
given by McMullen in~\cite{Bilu}. McMullen also verified~\cite[Thm~1.7]{Curt} that
the conjecture holds when the parameters \((p,q,r)\) are at most \(5000\).
The perspective of~\cite{Bilu} is to recognize the finiteness
as a  case of an equidistribution result for roots of unity.
Results of this kind are natural extensions of 
Lang's conjecture (proved
by Ihara, Serre, Tate in the 60's) concerning the intersection
of subvarieties of \(\GG^r_m\) with the set of torsion points.
In addition to these soft equidistribution results,
McMullen had to understand
the  geometry of a moduli space 
 of triangles identified
with \(\R^3/\Lambda\), where \(\Lambda \subset \Z^3\)
is the lattice consisting of triples \((a,b,c)\) with \(a+b+c \equiv 0 \bmod 2\).
On the other hand, it 
is notoriously difficult to turn  soft equidistribution results 
 into effective finiteness results. Such problems have arisen
 frequently in the literature;  
we recall one of the most basic problems of this type now 
in~\S~\ref{sec:effective},
as a possible model for thinking about Conjecture~\ref{conjA}.

\subsection{Effective results for roots of unity} \label{sec:effective}
Let \(\zeta\) and \(\xi\) be roots of unity, and consider the sum
\begin{equation} \label{robinson}
\alpha = 1 + \zeta + \xi.
\end{equation}
The element \(\alpha\) lies in the image of the torsion points of \(\GG^2_m\)
under the map to \(\mathbf{A}^1\) given by
\(1+X+Y\).
If one defines the \emph{house} of \(\alpha\) to be:
\[\ho{\alpha}:= \max_{\sigma} \left| 1 + \sigma \zeta + \sigma \xi \right|\]
as \(\sigma\) ranges over all automorphisms \(\sigma \in \Gal(\Qbar/\Q)\),
then the triangle inequality gives \(\ho{\alpha} \le 3\).
What can we say about the possible \(\alpha\) if we restrict
to those for which \(\ho{\alpha} < B\) for some explicit \(B < 3\)?
The equidistribution results of the flavor employed in~\cite{Bilu} show that,
for any fixed \(B < 3\), all such \(\alpha\) lie on the image of a finite union
of translates by torsion points of proper subgroup varieties of \(\GG^2_m\),
which (in this case) will be of dimension one and zero.  Can one make
this explicit?
An classical argument due to  Kronecker~\cite{MR1578994} 
shows that if \(\ho{\alpha} \le 2\),
then \(\alpha\) is actually a sum of at most two roots of unity,
and subsequently turns out to be in the image of the  the cosets
\((X,\rho X) \subset \GG^2_m\) of the diagonal
where \(\rho\) is any root of \(\rho^3 = -1\).
In~\cite{Robinson},
Raphael Robinson raised this very question of determining all \(\alpha = 1 + \zeta + \xi\)
such that 
\[\ho{\alpha} \le \sqrt{5}.\]
The reason for  Robinson's choice of this particular bound will be more apparent below.
The soft argument combined with some geometry shows 
that --- with finitely many exceptions --- all such examples either have
\(\ho{\alpha} < 2\) or come from the coset \((X,-X^{-1}) \subset \GG^2_m\)
of the anti-diagonal, 
that is, coming from \(\alpha\) of the form
\begin{equation}
\label{family}
\alpha = 1 + \zeta - \zeta^{-1}.
\end{equation}
One can check that all specializations on the family~\eqref{family} 
indeed have \(\ho{\alpha} \le \sqrt{5}\), and this realizes
 \(\sqrt{5}\) as a  limit point of \(\ho{\alpha}\) for
 sums of three roots of unity; this is the second such limit point after
 \(2 = \sqrt{4}\) coming from numbers  which can also be expressed as
  sums of two roots of unity. However, determining the finitely
many zero dimensional exceptions is significantly harder ---
in this case they were ultimately determined by Jones~\cite{Jones} to consist
(up to some obvious equivalences)  to precisely
 the five following examples:
\[ \begin{aligned}
& 1 + \e(1/7) + \e(3/7),  1 + \e(1/13) + \e(4/13), 1 + \e(1/24) + \e(7/24), \\
& 1 + \e(1/30) + \e(12/30), 1 + \e(1/42) + \e(13/42), \end{aligned}\]
where \(\e(x) = \exp(2 \pi i x)\).
Jones
used a number of novel arguments via the geometry
of numbers and  Mahler's duality theorem to reduce the problem to a manageable
calculation  --- previous work of Schinzel and Davenport~\cite{MR205926}
had produced an explicit but totally unmanageable upper bound
(of the flavor that the roots of unity involved could be assumed to have
order less than \(10^{10}\), for example).

Let us now return to Conjecture~\ref{conjA}.
As explained in~\cite{WM}
(cf.~\cite[12.3.6]{Deligne}) The quaternion algebra \(B\)
splits at all real places for \((p,q,r)\) if and only if
there exists an integer \(k\) prime to \([2,p,q,r]\) such that \(d(k) \ge 0\),
where
\begin{equation}
\label{firstd}
\begin{aligned}
d(t)  &  = 4 - 4\cos^2( t \pi/p) - 4 \cos^2( t \pi/q) - 4 \cos^2(t \pi/r) \\
& \ \ \ - 8  \cos(t \pi/p) \cos(t \pi/q)  \cos(t \pi/r) \\
& = 4  \cdot \det \left| \begin{matrix} 1 & - \cos(\pi/p) & - \cos(\pi/q) \\
-\cos(\pi/p) & 1 & -\cos(\pi/r) \\
-\cos(\pi/q) & -\cos(\pi/r) & 1 \end{matrix} \right|.
\end{aligned}
\end{equation}
More concretely, a Hilbert symbol
\(\displaystyle{\left(\frac{\alpha,\beta}{K}\right)}\) for \(B/K\) is given
explicitly  by 
\[\begin{aligned}
\alpha &  = 4 - 4\cos^2( \pi/p) - 4 \cos^2(\pi/q) - 4 \cos^2(\pi/r)
- 8  \cos(\pi/p) \cos(\pi/q)  \cos(\pi/r)\\
 \beta & =  4 \cos^2 \pi/p - 4. \end{aligned}
\]
(See~\cite[Thm~2]{WM}.)
It is easy to recognize~\eqref{firstd} as simply the expression
\begin{equation}
\label{robinsontwo}
\alpha = -(\zeta + \zeta^{-1})(\xi + \xi^{-1})(\theta + \theta^{-1})
(\zeta \xi \theta + \zeta^{-1} \xi^{-1} \theta^{-1})
\end{equation}
for three roots of unity \(\zeta,\xi,\theta\) depending
in an elementary way on \(p\),  \(q\), and \(r\),
which ``reduces''
Conjecture~\ref{conjA} to a problem of the same flavor 
as the problem of~\cite{Robinson}
discussed above. Concretely,
the conjugates of \(\alpha\) lie \emph{a priori}
in the interval~$[-16,4] \subset \R$, and so we want to classify
all \(\alpha\) such that
\[\ho{8 + \alpha} \le 8.\]
  The increased apparent  level
of complication of \eqref{robinsontwo} over~\eqref{robinson}
suggests that finding an explicit bound which reduces
Conjecture~\ref{conjA} to a  manageable computation
may present some difficulties. At the same time, there are a number
of further approaches in the literature for studying such questions.
As noted, there is the work of Jones in~\cite{Jones}
and in subsequent papers~\cite{Jones1,Jones2,Jones3,Jones4} as well as
the work of Cassels~\cite{Cassels}, as well as more Fourier--theoretic
approaches such as Davenport--Schinzel~\cite{MR205926}.

\subsection{The strategy}

In spirit, our argument is much closer to that of~\cite{WM}
than anything in~\cite{Bilu}.
We now discuss the general ideas behind our approach.
Suppose one wants to study the distribution of the
values of polynomial
in a single variable evaluated at roots of unity, but in an effective manner
(this is already interesting for \(f(X) = X\)).
The soft statement in this case is the fact that, for \(m\) large,
the conjugates of \(\zeta = \e(1/m)\) are equidistributed along the unit circle.
More precisely, however, they have the form \(\e(k/m)\) with \((k,m) = 1\).
So to understand how close to a point on \(|z| = 1\) one can find
such an \(\e(k/m)\)  for any \(m\), one has
to understand how close to any  real \(t \in \R\) one can choose an
integer \(k\) with \((k,m) = 1\). This immediately reduces to the problem
of understanding how long an arithmetic progression \(a,a+1,a+2,\ldots\)
has to be before there exists an element in this sequence coprime to \(m\).
By definition, this is given by the Jacobsthal function \(J(m)\).
Suppose that \(m\) has  \(r\) distinct prime factors.
A theorem of Iwaniec~\cite{Iwaniec} shows that \(J(m) \ll (r \log r)^2\), but Iwaniec's
results are not effective. A theorem of Kanold~\cite{Kanold}
shows that  \(J(m) \le 2^r\).
This is definitely effective but it is not optimal --- if \(m =pqr\), then an elementary
argument shows that \(J(pqr) \le 6\),
and if \(m = pqr\kern-0.1em{s}\), then \(J(m) \le 10\). It is crucial for our ultimate applications
that we have excellent bounds on \(J(m)\) for \(m\) with a moderately
large number of prime factors, say at most \(20\) prime factors. Fortunately there are 
results in the literature  which give such bounds. For example, if \(m\)
has \(20\) distinct prime divisors, then \(J(m) \le 174\) (see Lemma~\ref{better}), which is far
better than Kanold's bound \(J(m) \le 1048576\). We recall
the results we need about the Jacobsthal function in~\S~\ref{sec:jacob}.

Now let us return to higher dimensions. Recall that we wish
to find an integer \(k\) prime to \([2,p,q,r]\) so that \(d(k) \ge 0\) where
\[d(t) = 4 - 4\cos^2( t \pi/p) - 4 \cos^2( t \pi/q) - 4 \cos^2(t \pi/r)
- 8  \cos(t \pi/p) \cos(t \pi/q)  \cos(t \pi/r).\]
Let \(\Lambda \subset \Z^3\) denote the sub-lattice of index two
with \(a+b+c \equiv 0 \bmod 2\), and for a vector \(v  = (x,y,z) \in \R^3\) we let
\[|\v - \Lambda| :=  \min \left( |x-a| + |y-b| + |z-c|\right),
 \ (a,b,c) \in \Lambda.\]
If
\(\displaystyle{\v  = \left(\frac{1}{p},\frac{1}{q},\frac{1}{r} \right)}\), 
then the condition that \(d(t) \ge 0\) is equivalent
 to the condition that
\begin{equation}
\label{easier}
 |t \v - \Lambda| \ge 1.
 \end{equation}
This follows from~\cite[Thm~1.10]{Bilu}.
Instead of trying to prove that \(d(k) \ge 0\) or \(|k \v - \Lambda| \ge 1\)
for some integer \(k\) prime to \([2,p,q,r]\), we could first ask to find
any integer \(k\) with this property,  or --- what is nearly equivalent
for big \(p,q,r\) --- a \emph{real} number \(t\) so that \(d(t) \ge 0\).
Even better, we can try to  produce a \(t\) for which
\(|t \v - \Lambda| \ge 1 + \varepsilon\)
for some explicit \(\varepsilon > 0\). 
Then by varying \(t\)  slightly, one can hope to find an integer \(k\)
close by to \(t\) for which \(k\) is prime to \([2,p,q,r]\) and for which 
\[ 1 + \varepsilon < |t \v - \Lambda| \sim |k \v -\Lambda|  \]
 does not change that much and so the latter is at least one.
Our key technical result (Theorem~\ref{5lemma}) is that there
always exists
a \(t \in \R\) so that
\[ \left|  \left(\frac{t}{p},\frac{t}{q}, \frac{t}{r} \right) - \Lambda \right| \ge
1 + \frac{1}{5}.\]
(This result is best possible, see Remark~\ref{improved}.)
We shall give two methods to prove this theorem.
The first, using Fourier analysis,
constitutes most of~\S~\ref{sec:fourier}.
The  second,  using geometry of numbers, is
discussed in~\S~\ref{sec:minkowski}.
We only carry out the first approach --- the second one certainly
works in principle but it is not a priori clear how computationally
feasible it might be.

Returning to our argument, once we have found a \(t \in \R\) so that
\(|t \v - \Lambda| - 1 > 0\) is big, we could hope to vary \(t\) slightly to make
it an integer prime to \(2pqr\). This works, proving that that
\[ \frac{1}{p} + \frac{1}{q} + \frac{1}{r}\]
is relatively small compared to the lcm \(n = [2,p,q,r]\) ---
 but it fails otherwise, in particular if one of \(p\), \(q\), or \(r\) is small.
 So we need a separate argument to deal with the
 cases when \(1/p+1/q+1/r\) is small compared to \(n\) and when
 \(\min(p,q,r)\)
 is small compared to \(n\). 
 The first case is carried out in~\S~\ref{sec:nbig}, and the case
 when~\(\min(p,q,r)\) is small is carried out in~\S~\ref{sec:small}.
 It turns out that this last case is ultimately the most computationally intensive.
 Finally, in~\S~\ref{sec:resolution} we show how the cases
 understood in~\S\S~\ref{sec:nbig}, ~\ref{sec:small} cover all cases.
 
A basic trick that we employ frequently is the following.
Suppose that \(p\) is divisible by a prime which does
not divide \(q\) or \(r\), or more generally that \(p\) is divisible by a strictly
higher power of a prime than \(q\) or \(r\), or perhaps a product of such primes.
Then we may  freely
conjugate one of the roots of unity while keeping the other
two roots fixed. This is clearly advantageous for our purposes.
In practice we employ this idea in the setting of vectors in \(\R^3/\Lambda\)
rather than roots of unity, but the idea is the same. We explain
this in~\S~\ref{sec:twisting}. 
More generally, in~\S~\ref{sec:lower}, we prove some analogues
of our main theorem in dimensions one and two, which we use
inductively to study the problem in dimension three.

Finally, there are a few tricks which by trial and error we simply found
that helped --- they involve some combinations of ideas that relate
both the idea of Galois conjugates and some elementary geometry
of  lines in \(\R^2/\Z^2\) of rational slope --- we apologize that some
of these will ultimately seem somewhat ad hoc.

\subsection{Some remarks about the title}
Given some object~$X$ defined over~$\Kbar$, the term ``field of moduli''
usually refers to the smallest field~$L$ for which there exists
an automorphism~$\psi_{\sigma}: X^{\sigma} \simeq X$
for every~$\sigma \in \Gal(\Kbar/L)$, whereas a ``field of definition''
is a field~$L$ so that~$X$ has a model over~$L$.
The reason that fields of moduli are not always  fields of definition
is because the~$\psi_{\sigma}$
do not always give compatible descent data due to the existence
of automorphisms of~$X$.
In our context, we think of~$X$ as the (unique up to conjugacy) representation
of the triangle group~$\Delta$  to~$\PSL_2(\R)$. The field of moduli in this case is the
trace field~$K$, and a field of definition~$L/K$ is any field where the representation
admits an explicit matrix model over this field. We hope that this terminology is clear
(although we only ever use this terminology in the title and in this subsection).

%
%

\subsection{Preliminaries}

For  integers \(n_i\) as \(i=1,\ldots,r\), let \([n_1,n_2, \ldots n_r]\) denote the lowest common multiple of the \(n_i\).
Let~\((p,q,r)\) be  integers, let~\(n = [2,p,q,r]\) be the least common multiple of these numbers, which is
equal to \([p,q,r]\) if at least one of~\(p\), \(q\), \(r\) is even, and twice this otherwise.

\begin{df} Let~\(\Lambda \subset \Z^3\) be the lattice consisting of triples of integers~\((a,b,c)\) with~\(a+b+c\)
even.
\end{df}

Given two vectors~\(\v\) and~\(\w\) in~\(\R^3\), we define a distance~\(| \cdot |_{1}\) to be the sum of the absolute
values of the differences, i.e.:
\[|\v - \w|_{1}:= |v_1 - w_1| + |v_2 - w_2| + |v_3 - w_3|.\]
Since we only use this distance function, we simply write \(|\v|\).

\begin{df} \label{notation}
Given a vector~\(\v \in \R^3\), we define the distance of~\(\v\) to the lattice~\(\Lambda\) to be the smallest distance from~\(\v\) to any
point in~\(\Lambda\), i.e.,
\[ | \v - \Lambda| := \min\{ |\v - \lambda| \, ; \, \lambda \in \Lambda\}\]
More generally, for \(\v \in \R^n\) and any lattice \(\Lambda \subset \R^n\),
we make the same definition, still using the \(| \cdot |_1\) norm on \(\R^n\).
\end{df}

 Note that this is a genuine distance function and so satisfies
the triangle inequality
and its variants. Apart from the case of our lattice \(\Lambda\),
we will most often use this notation for \(x \in \R\) and \(\Lambda = \Z\)
or \(2\Z\) in \(\R\), but we shall also use it for the standard
lattice \(\Z^3\) in \(\R^3\), particularly in ~\S~\ref{sec:fourier}.

\begin{example}  If~\(\w = (1/2,1/2,1/2)\), then~\(|\w - \Lambda| = 3/2\). 
This is the maximal value of \(|\v - \Lambda|\).
\end{example}

We now rephrase the basic result of this paper in elementary terms.
By~\cite{Bilu},  this suffices to prove
Conjecture~\ref{conjA}.

\begin{theorem} \label{maintheorem} Let \((p,q,r)\) be a triple with \(1/p+1/q+1/r < 1\)
which is not in the Hilbert Series. Let \(n = [2,p,q,r]\). Then there exists an integer~\((k,n)=1\)
such that~\(|k \v - \Lambda| \ge 1\), where
\[\v = \left( \frac{k}{p}, \frac{k}{q}, \frac{k}{r} \right)\]
\end{theorem}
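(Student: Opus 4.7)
The plan is to promote the continuous existence result, Theorem~\ref{5lemma}, to an integer statement by perturbation, with the perturbation controlled by the Jacobsthal function. Theorem~\ref{5lemma} produces a real $t\in\R$ with $|t\v-\Lambda|\ge 6/5$ for $\v=(1/p,1/q,1/r)$. Replacing $t$ by an integer $k$ costs at most $|k-t|\cdot s$ in the distance $|\cdot|_1$, where $s=1/p+1/q+1/r$, so
\[
|k\v-\Lambda|\;\ge\;|t\v-\Lambda|\;-\;|k-t|\,s.
\]
It therefore suffices to locate an integer $k$ with $(k,n)=1$ inside the interval $[t-1/(5s),\,t+1/(5s)]$. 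By the definition of the Jacobsthal function, such a $k$ is guaranteed whenever $J(n)/2\le 1/(5s)$, i.e.\ whenever $s\cdot J(n)\le 2/5$.

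First I would carry out this clean perturbation argument in the regime where it applies. Using the bounds on $J(n)$ recalled in~\S~\ref{sec:jacob} --- notably Kanold's exponential bound combined with the much sharper bounds for $n$ with a moderate number of prime factors, such as $J(n)\le 174$ when $n$ has $20$ distinct prime factors (Lemma~\ref{better}) --- the inequality $s\cdot J(n)\le 2/5$ can be verified in a large range of triples. This is the content of~\S~\ref{sec:nbig}.

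The remaining situation is when $s=1/p+1/q+1/r$ is large relative to $1/J(n)$. By the hyperbolicity hypothesis $s<1$, the bulk of such cases force $\min(p,q,r)$ to be small, and the $1/5$ margin of Theorem~\ref{5lemma} is then insufficient to absorb a Jacobsthal perturbation. In~\S~\ref{sec:small} one proceeds case by case over each small value of $\min(p,q,r)$. The main technical leverage is the twisting trick of~\S~\ref{sec:twisting}: if $p$ has a prime-power factor missing from $q$ and $r$, one may independently Galois-conjugate the $1/p$-coordinate of $\v$ while fixing the others, reducing the three-dimensional distance-to-$\Lambda$ problem to one- and two-dimensional analogues of the Jacobsthal-type perturbation proved in~\S~\ref{sec:lower}. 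Combining these lower-dimensional estimates with explicit lists of small exceptional triples produces an admissible $k$ in every remaining case.

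The main obstacle, as the introduction warns, is this small-$\min(p,q,r)$ analysis in~\S~\ref{sec:small}: the $1/5$ margin is not enough to wiggle through directly, and no amount of reformulation eliminates the need for a case-by-case treatment of the smallest parameter values. Finally,~\S~\ref{sec:resolution} does the combinatorial bookkeeping to check that the two regimes together cover every non-Hilbert-Series triple with no gap in the middle. Here the sharpness of the $1/5$ in Theorem~\ref{5lemma} (Remark~\ref{improved}) is essential, since any weaker margin would fail to close the crossover between the regime where~\S~\ref{sec:nbig} applies and the regime where~\S~\ref{sec:small} is forced to terminate.
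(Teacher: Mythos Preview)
Your proposal is correct and follows essentially the same architecture as the paper: promote Theorem~\ref{5lemma} to an integer via a Jacobsthal-controlled perturbation (\S\ref{sec:nbig}), handle small $\min(p,q,r)$ separately via twisting and the lower-dimensional lemmas of \S\ref{sec:lower} (\S\ref{sec:small}), and then close the gap between the two regimes (\S\ref{sec:resolution}). Two small corrections: the endgame in \S\ref{sec:resolution} is not mere bookkeeping but a further analytic step---if both the inequality $J(n)\le 2\min(p,q,r)/15$ from \S\ref{sec:nbig} and the inequality $J(abd)\le abd/(72B^2)$ (coming from the same argument that drives \S\ref{sec:small}) fail, one multiplies them to obtain $J(n)^3>n/24300$, contradicting Lemma~\ref{sqbound} once $n$ has $\ge 16$ prime factors---and the constant $1/5$ is computationally important rather than logically essential, since the paper notes that an earlier version with margin $1/13$ would still work in principle at greater computational cost.
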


We finish this section with some more preparatory lemmas.
\begin{lemma} \label{extra} Fix \(x\) and \(y\), and let
\(M(x,y) = \max \left| \left(x,y,z\right) - \Lambda \right|   \) as \(z\) ranges over
elements of \(\R\).
Then
\[
\begin{aligned}
M(x,y) &  = M(y,x) \\
M(x,y)  & = M(x \bmod 1,y \bmod 1) \\
M(x,y) & = M(1-x,y) \end{aligned}
\]
\end{lemma}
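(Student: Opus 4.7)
All three identities are symmetry statements, and the plan is to deduce each from a suitable isometry of $(\R^3, |\cdot|_1)$ that stabilizes the lattice $\Lambda$, combined when necessary with the freedom of taking a maximum over $z$.

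The first identity $M(x,y) = M(y,x)$ is the easiest: the swap $\sigma(x,y,z) = (y,x,z)$ is an $L^1$-isometry that preserves $\Lambda$ (since the defining condition $a+b+c\equiv 0 \bmod 2$ is symmetric in the coordinates), so $|(x,y,z) - \Lambda| = |(y,x,z) - \Lambda|$ for every $z$, and taking max gives the claim.

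The second identity $M(x,y) = M(x \bmod 1, y \bmod 1)$ requires a small trick, because $(1,0,0)$ is \emph{not} in $\Lambda$; this is the only place where the argument is not completely automatic. The key point is that $(1,0,1) \in \Lambda$ and $(0,1,1) \in \Lambda$, so shifting $x$ by $1$ can be absorbed by simultaneously shifting $z$ by $1$. Concretely, for any $z$ one has
\[ |(x+1, y, z+1) - \Lambda| = |(x,y,z) - \Lambda|,\]
because $\Lambda + (1,0,1) = \Lambda$. Since the max defining $M$ ranges over all $z \in \R$, substituting $z' = z+1$ shows $M(x+1,y) = M(x,y)$; the same argument with $(0,1,1)$ gives invariance in $y$. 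Iterating gives $M(x,y) = M(x \bmod 1, y \bmod 1)$.

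The third identity $M(x,y) = M(1-x, y)$ comes from combining an order-$2$ reflection with identity $(2)$. The involution $\tau(x,y,z) = (-x, y, z)$ is an $L^1$-isometry, and it preserves $\Lambda$ because $(-a) + b + c$ and $a + b + c$ have the same parity. Therefore $M(-x, y) = M(x, y)$, and then identity $(2)$ (applied with shift $x \mapsto -x + 1$) gives $M(-x,y) = M(1-x, y)$, completing the proof. The main (mild) obstacle is spotting that the first coordinate alone cannot be shifted by $1$, but the combined shift $(1,0,1)\in\Lambda$ (available thanks to the max over $z$) resolves this.
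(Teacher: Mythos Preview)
Your proof is correct and follows essentially the same approach as the paper: all three identities are obtained from $L^1$-isometries preserving $\Lambda$, with the key observation (shared with the paper) that shifts in $x$ or $y$ alone must be compensated by a shift in $z$ via $(1,0,1),(0,1,1)\in\Lambda$. The only cosmetic difference is in the third identity: the paper uses the single affine involution $(a,b,c)\mapsto(1-a,b,1-c)$ on $\Lambda$ directly, whereas you factor it as the reflection $x\mapsto -x$ followed by the periodicity from part~(2); these are of course the same map.
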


\begin{proof}
Since \(\Lambda\) is invariant under permuting the first two entries,
the first claim follows. For the second, we can replace \(z\) by \(z \pm  (1,0,1)\)
or \(z \pm (0,1,1)\). This brings us to the third claim.
The main point is that, for \((a,b,c) \in \Lambda\) we have \((1-a,b,1-c) \in \Lambda\),
and then
\[ |(x,y,z) - (a,b,c)| = |x-a|+|y-b|+|z-c| = |(1-x,y,1-z) - (1-a,b,1-c)|.\]
In particular, the distances from \((x,y,z)\) to the set of lattice
points is the same as the distances from \((1-x,y,1-z)\) to the set of lattice
points. But then
\[M(x,y) = \max \left| \left(1-x,y,1-z\right) - \Lambda \right|
= \max \left| \left(1-x,y,z\right) - \Lambda \right| = M(1-x,y).\]
\end{proof}

\begin{lemma} \label{computingM} If \(0 \le x,y \le 1/2\), then
\(M(x,y) = 1+\min(x,y).\) More generally,
\[M(x,y) = 1 + \min(|x-\Z|,|y-\Z|)\]
\end{lemma}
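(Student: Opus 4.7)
The plan is to reduce to the case $0 \le x, y \le 1/2$ via Lemma~\ref{extra}, and then directly optimize over $z$ by decomposing $\Lambda$ along the parity of its coordinates. By the symmetries $M(x,y) = M(x \bmod 1, y \bmod 1) = M(1-x, y)$ from Lemma~\ref{extra} (and symmetrically in $y$), one may replace $(x,y)$ by $(|x-\Z|, |y-\Z|) \in [0, 1/2]^2$ without changing $M(x,y)$; this also identifies $\min(x,y)$ with $\min(|x - \Z|, |y-\Z|)$, so the general formula follows from the special case. It therefore suffices to prove $M(x,y) = 1 + \min(x,y)$ for $x, y \in [0, 1/2]$.

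The key observation is that a lattice point $(a,b,c) \in \Lambda$ satisfies $a+b+c$ even, equivalently $a+b$ and $c$ have the same parity. I would split the minimization in the definition of $|(x,y,z) - \Lambda|$ according to this common parity; in each of the two cases the minimizations over $(a,b)$ and over $c$ decouple. For $x, y \in [0, 1/2]$ one checks immediately that the minimum of $|x-a| + |y-b|$ over integer pairs with $a+b$ even equals $x + y$, attained at $(0,0)$, and over pairs with $a+b$ odd equals $1 - |x-y|$, attained at $(0,1)$ or $(1,0)$ — any other pair shifts some coordinate by at least $1$ and is therefore worse. Moreover, $|z - 2\Z| + |z - (2\Z+1)| = 1$ for every $z \in \R$, since the nearest even and nearest odd integers to $z$ are consecutive and lie on opposite sides of $z$.

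Putting this together and writing $u := |z - 2\Z| \in [0,1]$, we obtain
\[
|(x,y,z) - \Lambda| \;=\; \min\bigl(\,x + y + u,\; 1 - |x-y| + (1-u)\,\bigr).
\]
Every $u \in [0,1]$ is realized by some $z$. The two arguments are affine in $u$ with slopes $+1$ and $-1$, so the minimum of the two is maximized at the unique $u$ where they coincide, namely $u^\star = 1 - \max(x,y)$, which lies in $[1/2,1] \subset [0,1]$ because $\max(x,y) \le 1/2$. The common value at $u^\star$ is $x + y + (1 - \max(x,y)) = 1 + \min(x,y)$, completing the proof.

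There is no real obstacle here; the argument is a careful but routine parity case-split followed by a one-variable piecewise-linear optimization. The only point demanding a brief sanity check is that the two candidate minimizing pairs $(0,0)$ and $\{(0,1),(1,0)\}$ really are optimal in their respective parity classes, which is transparent for $x, y \in [0, 1/2]$ since any competing integer pair forces a coordinate shift of at least $1$.
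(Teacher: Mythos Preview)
Your proof is correct and follows essentially the same approach as the paper: reduce to $[0,1/2]^2$ via Lemma~\ref{extra}, identify the relevant lattice points $(0,0)$, $(0,1)$, $(1,0)$, and optimize over $z$. Your parity decomposition together with the identity $|z-2\Z|+|z-(2\Z+1)|=1$ packages the optimization a bit more cleanly than the paper's explicit case split on ranges of $z$, but the underlying computation is the same.
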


\begin{proof} To see how the latter equality follows
from the former, note that \(|t - \Z| = |1-t-\Z|\) and so
by Lemma~\ref{extra} we can reduce to this case.

 Suppose that \((a,b,c)\) is the closest lattice
point to \((x,y,z)\). 
We have
\[ | (x,y,z) - (a,b,c)| = |x-a| + |y-b| + |z-c|.\]
Replacing \(a\) by \(-a\) or \(b\) by \(-b\) preserves 
the property of \((a,b,c) \in \Lambda\), but increases the RHS,
so without loss of generality we have
\[ (a,b) \in \{(0,0),(1,0),(0,1)\}\]
At the same time, we can always take \(z \in [0,2]\) and
so either \(c=0,2\) or \(c = 1\) is optimal.
Thus
\(|(x,y,z) -\Lambda|\) is the minimum of the four quantities:
\[
x + y + z,x+y+2-z, 1-x+y + |z-1|, x + 1-y + |z-1|.\]
Suppose that \(0 \le x \le y\). Then with the choice \(z = 1-x\),
Suppose without loss of generality that \(x \le y\). Then
with the choice \(z = 1-y\), these quantities become
\[1+x, 1+x + 2y, 1+x + 2(y-x), 1+x,\]
which has minimum \(1+x\) and shows that \(M(x,y) \ge 1+x\).
On the other hand, note that
\[
\begin{aligned}
x+y+z \le 1+x, &\quad \text{for} \ z \le 1-y, \\
x + 1-y + |z-1| \le 1+x, & \quad \text{for} \ 1-y \le z \le 1+y, \\
x + y + 2 -z \le 1+x, & \quad \text{for} \  z \ge 1+y, \\
\end{aligned}
\]
which shows that \(1+x\) is optimal.
 \end{proof}

\begin{lemma} \label{weyl}
The norm \(|\v|\) is invariant under the group
\[(\Z/2 \Z)^3 \ltimes S_3 = \Z/2\Z \wr S_3 \subset \mathrm{O}(3)\]
acting on the coordinates
by permutation and by signs.
If \(\w = (1/2,1/2,1/2)\), then \(|\v - \w - \Z^3|\) is also
preserved by this group, whereas
The distances
\[ |\v -\w - \Lambda|\]
is  preserved only by
 \[(\Z/2\Z \wr S_3) \cap \mathrm{SO}(3) \simeq S_4.\]
\end{lemma}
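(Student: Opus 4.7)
The plan is to reduce each invariance claim to the statement that a given subset of~$\R^3$ is preserved setwise, using the elementary principle that for any~$g$ preserving the norm~$|\cdot|$ and any~$S \subseteq \R^3$, the distance $\v \mapsto |\v - S|$ is $g$-invariant if and only if $g(S) = S$.

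The first claim, invariance of $|\v| = |v_1|+|v_2|+|v_3|$ under $B := \Z/2\Z \wr S_3$, is immediate, since coordinate permutations and sign changes preserve each $|v_i|$ individually. For the second claim, with $\w = (1/2,1/2,1/2)$, the set $\w + \Z^3$ is exactly the collection of vectors with all half-integer coordinates, and is manifestly stable under both permutations and sign changes (using $-(n+1/2) = -(n+1) + 1/2$); so $|\v - \w - \Z^3|$ is $B$-invariant.

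The substantive content is the third claim. First I would check that $\Lambda$ itself is $B$-stable: permutations obviously preserve the parity of $a+b+c$, and sign changes do as well since $\pm a \equiv a \pmod{2}$. Consequently $g(\w + \Lambda) = g\w + \Lambda$, and $g$ preserves $\w + \Lambda$ exactly when $g\w - \w \in \Lambda$. Writing $g = (\sigma, \epsilon)$ with $\epsilon \in \{\pm 1\}^3$ and setting $k = \#\{i : \epsilon_i = -1\}$, the vector $g\w$ is a permutation of $(\epsilon_1/2, \epsilon_2/2, \epsilon_3/2)$; hence each coordinate of $g\w - \w$ lies in $\{0,-1\}$, and the sum of its coordinates equals $-k$. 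The condition $g\w - \w \in \Lambda$ is thus equivalent to $k$ being even.

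Consequently, the stabilizer $H \subseteq B$ is the index-two subgroup of order $24$ consisting of those $(\sigma, \epsilon)$ with $|\epsilon|$ even. Since the even-weight vectors in $(\Z/2\Z)^3$ form a Klein four-group $V_4$ normalized by the $S_3$-action, $H = V_4 \rtimes S_3$, which is canonically isomorphic to $S_4$ via the standard identification of $V_4$ with the normal Klein four-subgroup $\{e,(12)(34),(13)(24),(14)(23)\}$ of $S_4$ and $S_3$ with the stabilizer of a point; this yields the $S_4$ asserted in the statement. There is no real obstacle: the only computational content is the parity count for $g\w - \w$, and the final group-theoretic identification is standard.
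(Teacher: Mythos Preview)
Your approach is essentially the paper's: reduce invariance of the distance function to setwise invariance of the relevant coset, then observe that the orbit of $\w + \Lambda$ under $B$ has size two. You carry this out more explicitly than the paper does, and your computation that the stabilizer consists of those $(\sigma, \epsilon)$ with an even number of sign flips is correct.

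There is, however, a subtlety you glossed over. The subgroup $H = V_4 \rtimes S_3$ you identified is \emph{not} the same as $B \cap \mathrm{SO}(3)$, contrary to what the lemma's statement asserts. Any transposition in $S_3$ (with no sign flips) fixes $\w$ and hence preserves $\w + \Lambda$, yet has determinant $-1$ as a permutation matrix; conversely, a single sign flip composed with a transposition lies in $\mathrm{SO}(3)$ but does not preserve $\w + \Lambda$. Both $H$ and $B \cap \mathrm{SO}(3)$ are index-two subgroups of $B$ isomorphic to $S_4$, but they are distinct subgroups. So the lemma as stated contains a small inaccuracy: the genuine stabilizer is the kernel of $(\sigma,\epsilon) \mapsto \prod_i \epsilon_i$, not of the determinant. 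Rather than claiming your $H$ ``yields the $S_4$ asserted in the statement,'' you should note this discrepancy. The paper's own proof only establishes that the stabilizer has index two, and the specific identification is never used later, so the issue is purely cosmetic.
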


\begin{proof} It suffices to observe that these groups
preserve the lattice \(\Z^3\) and \(\Lambda\), as well as
\(\w + \Z^3\). Since \(2 \w \notin \Lambda\), the
orbit of this group on \(\w + \Lambda\) is given by
\(\pm \w + \Lambda\) and the stabilizer has index two.
\end{proof}

\section{Fourier Analysis}
\label{sec:fourier}
The main result of this section is the following:

\begin{ithm} \label{5lemma} Let~$(p,q,r)$ be non-zero rational
numbers. Then there exists a real number $t$ such that
\[ \left| \left( \frac{t}{p}, \frac{t}{q}, \frac{t}{r} \right) - \Lambda \right| \ge 
1 + \frac{1}{5}.\]
\end{ithm}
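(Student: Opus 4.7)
The plan is to recast the statement as an extremal problem for a one-variable periodic function and attack it via Fourier analysis on the torus \(\R^3/\Lambda\). Set \(\v = (1/p, 1/q, 1/r)\) and \(g(t) := |t\v - \Lambda|\); after rescaling we may assume \(\v \in \Q^3\) with common denominator \(N\), so that \(g\) is continuous and periodic in \(t\). The key fact driving the strategy is that \(f(x,y,z) := |(x,y,z) - \Lambda|\) attains its global maximum value \(3/2\) exactly on the \(\Lambda\)-orbit of \((1/2, 1/2, 1/2)\), and more precisely that in a neighborhood of this maximum
\[
f\!\left(\tfrac12 + a,\, \tfrac12 + b,\, \tfrac12 + c\right) = \tfrac32 + \min\bigl\{a+b+c,\ -a-b+c,\ -a+b-c,\ a-b-c\bigr\},
\]
since the four lattice points \((0,0,0), (1,1,0), (1,0,1), (0,1,1)\) are the nearest neighbors of \((1/2,1/2,1/2)\) in \(\Lambda\). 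Consequently, the condition \(f \ge 6/5\) is equivalent to four linear inequalities in \((a,b,c)\), cutting out a small tetrahedron with vertices at \(\tfrac{3}{10}(1,1,1)\), \(\tfrac{3}{10}(-1,-1,1)\), \(\tfrac{3}{10}(-1,1,-1)\), and \(\tfrac{3}{10}(1,-1,-1)\) centered at \((1/2, 1/2, 1/2)\).

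The task thus reduces to producing a real \(t^\star\) such that \(t^\star \v\) modulo \(\Lambda\) lands inside this ``good tetrahedron'' (or its translate by \((1,0,0)\), the second global maximum of \(f\) in \(\R^3/\Lambda\)). To do this I would Fourier-expand \(f\) with respect to the dual lattice \(\Lambda^* = \Z^3 \cup (\Z^3 + (\tfrac12, \tfrac12, \tfrac12))\). Pulling this series back along \(t \mapsto t\v\) turns \(g\) into an explicit trigonometric series whose frequencies are the values \(\lambda \cdot \v\) for \(\lambda \in \Lambda^*\). The existence of a suitable \(t^\star\) then follows either from a direct Diophantine/pigeonhole construction on the closed orbit \(\overline{\R \v} \subset \R^3/\Lambda\), or from a weighted-average argument \(\int g(t)\phi(t)\, dt \ge 6/5\) where \(\phi\) is a nonnegative density whose Fourier support is concentrated on those \(\lambda \cdot \v\) that contribute the leading Fourier modes of \(f\) near \((\tfrac12,\tfrac12,\tfrac12)\). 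The symmetries recorded in Lemmas~\ref{extra} and~\ref{weyl}, together with the two equivalent global maxima, are used throughout to reduce the analysis to a fundamental wedge.

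The main obstacle is that the bound \(6/5\) is sharp (Remark~\ref{improved}): no robust soft argument can suffice, and a naive Lipschitz estimate of the form \(g \ge 3/2 - |t^\star\v - (\tfrac12,\tfrac12,\tfrac12)|\) is in general too weak, because in extremal configurations the \(\ell^1\)-distance from \(t^\star\v\) to \((1/2,1/2,1/2)\) can exceed \(3/10\) while the piecewise-linear formula above still produces \(f \ge 6/5\) thanks to sign cancellations among the four competing min-candidates. The critical input must therefore be the exact polyhedral geometry of \(f\) near its maxima, and the heart of the argument is identifying --- and then handling uniformly --- the extremal directions \(\v\) for which the closed orbit \(\overline{\R\v}\) only just manages to enter the good tetrahedron. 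This is where the Fourier expansion of \(f\) is pushed to its limit.
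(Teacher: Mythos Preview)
Your proposal is a strategy sketch rather than a proof, and the gap is exactly where you say the heart of the argument lies: you never actually execute the step of producing \(t^\star\). The two alternatives you offer --- ``a direct Diophantine/pigeonhole construction'' or ``a weighted-average argument \(\int g(t)\phi(t)\,dt \ge 6/5\)'' --- are both left as assertions. For the second, note that since \(g\) takes values in \([0,3/2]\) and the bound \(6/5\) is sharp, a weighted average cannot reach \(6/5\) unless the weight \(\phi\) is essentially supported on the good set; but then you are back to proving that the orbit meets the good tetrahedron, which is the original problem. You correctly anticipate that the extremal directions \(\v\) are the crux, but you do not identify them, and there is no mechanism in your outline that would isolate finitely many exceptional \(\v\) and dispose of them.

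The paper's argument shares your overall framing (Fourier analysis on the torus, line through the origin, sharpness forces case analysis) but differs in the key technical device. It does \emph{not} Fourier-expand \(f\) or the characteristic function of \(\{f\ge 6/5\}\); in fact it remarks that those Fourier coefficients are a mess. Instead it introduces the nonnegative proxy
\[
e(x,y,z)=(8\sin\pi x\,\sin\pi y\,\sin\pi z)^2,
\]
which is \(\Z^3\)-periodic, peaks exactly at \((1/2,1/2,1/2)\), and has a finite Fourier expansion. The paper computes the moment \(E[(e(t)-24)^{12}]\) along the line \(t\v\); generically this equals a fixed constant \(C_{0,12}>24^{12}\), forcing \(\max e(t)\ge 48\), which by a Lagrange-multiplier lemma translates into \(|t\v-\Z^3|\ge 6/5\). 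The non-generic \(\v\) are precisely those orthogonal to some \(\lambda\in\Z^3\) in the finite Fourier support of \((e-24)^{12}\): this yields an explicit list of sixteen exceptional hyperplanes and a finite set of exceptional lines (codimension two), each of which is then checked by direct computation (including the hyperplane \(1/p+1/q+1/r=0\), where the \(\Z^3\)-version genuinely fails and one must revert to \(\Lambda\)). Your proposal has no analogue of this finiteness mechanism, and without it the sharp cases \((1,2,6)\), \((2,3,6)\), \((3,4,12)\) cannot be separated from the rest.
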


\begin{remark} \label{improved}
The constant is best possible; one can do no
better than equality in the case of \((p,q,r) = (1,2,6)\), \((2,3,6)\),
and \((3,4,12)\) --- see Figure~\ref{fig:3Db}.
On the other hand, 
our proof shows that, apart from these exceptions,
the stronger inequality holds:
\[ \left| \left( \frac{t}{p}, \frac{t}{q}, \frac{t}{r} \right) - \Lambda \right| \ge 
\frac{3}{2} - \frac{
\displaystyle{3 \arccos \left( \frac{3^{1/6}}{2^{1/3}}\right)}
}{\pi} =  1.206646\ldots > 1 + \frac{1}{5}.\]
 Our proof in principle could be modified to increase
the constant further (still with finitely many exceptions),
but the gain in utility would be
marginal.  
\end{remark}

\subsection{The idea of the proof}
Note that by scaling, we may assume that \((p,q,r)\) are all 
non-zero integers. 
Moreover, using the action of~\(\Z/2\Z \wr S_3\)
as in Lemma~\ref{weyl} we are also free to change any of the signs
of \(p,q,r\) (note that the element $-I$ sends any line through the origin
to itself).
Our approach is as follows.
For most \((p,q,r)\), we will actually prove  the  statement
that there exists a real \(t\) so that
\begin{equation}
\label{weproveit}
\left| \left( \frac{t}{p}, \frac{t}{q}, \frac{t}{r} \right) - \Z^3 \right| \ge 
1 + \frac{1}{5},
\end{equation}
where we recall that \(\Lambda\) has index two in \(\Z^3\)
so that~\eqref{weproveit} is a stronger statement.
More precisely, we can even show such an inequality
with \(1+1/5\) replaced by \(1 + 1/2 - \varepsilon\)
for any \(\varepsilon > 0\) as long as we 
\emph{avoid} finitely many hyperplanes
\[\frac{a}{p} + \frac{b}{q} + \frac{c}{r} = 0,\]
where ``finitely many'' will  depend on \(\varepsilon\)
in  a way that one can in practice quantify.
There are two approaches we have to proving
such inequalities; one inspired by Fourier analysis
(which is what we ultimately use) and a second using
the geometry of numbers which we discuss in the section
below.
The Fourier analysis approach is roughly as follows.
We are working in the settings of functions on the compact
torus \(\R^3/\Z^3\) or \(\R^3/\Lambda\) or \(\R^3/(2 \Z)^3\).
Let \(L \subset \R^3/\Lambda\) be a rational line, which is an embedded
one-dimensional torus.
Let \(\chi(x,y,z)\) be the characteristic function of the region
where we want \(L\) to intersect.
To show the intersection is non-zero, we want to compute
\[ \int_{L} \chi(t) dt\]
and prove that it is non-zero.
This   leads to a soft proof that the result
holds away from finitely many hyperplanes; arguments
of this kind are well-known to the experts.
However, we need a result which is valid in \emph{all} cases,
so in particular we need to identify explicitly the list
of exceptional hyperplanes which then need to be considered.
Moreover, the particular region we are interested in is really quite
complicated, as seen in Figure~\ref{fig:3D}.
\begin{center}
\begin{figure}[H]
  \includegraphics[width=6cm]{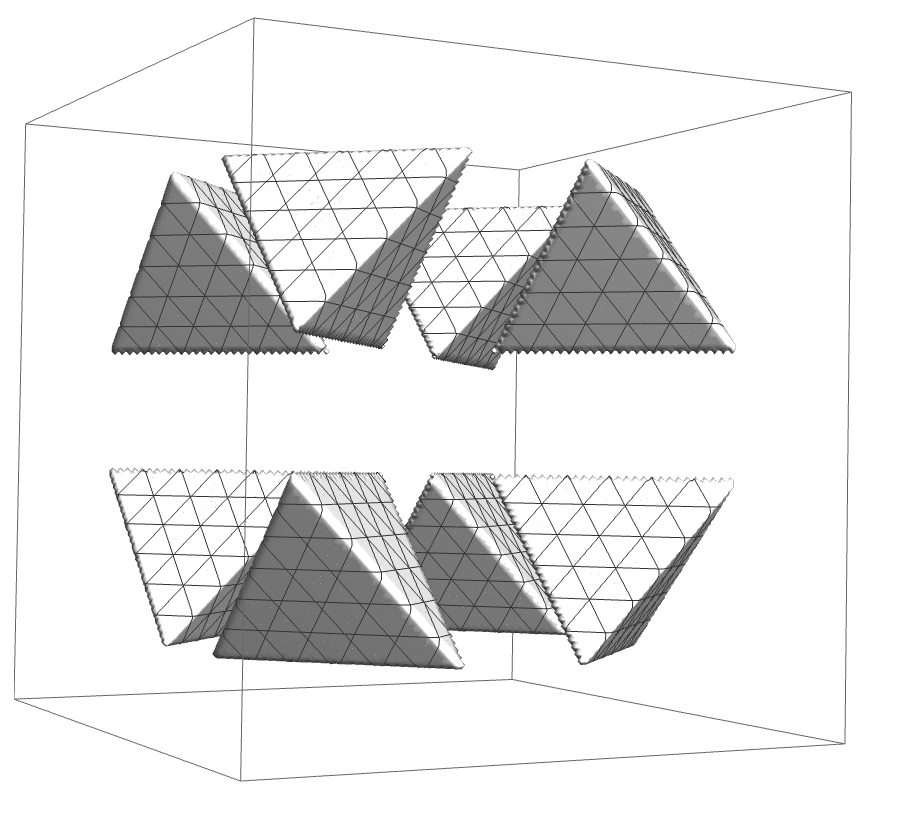}
  \caption{The  region \(|(x,y,z) - \Lambda| \ge 6/5\)
  in \(\R^3/(2 \Z)^3\).}
  \label{fig:3D}
  \end{figure}
\end{center}
As a result, the Fourier series of \(\chi(x,y,z)\) is a complicated mess.
For example, for the region \(|\v - \w| \ge 1 + 1/5\) in \(\R^3/\Z^3\),
after the constant term \(9/250\), the coefficient
of \(e^{2 \pi i x}\) is
\[ \frac{
\displaystyle{
5 \sqrt{2 (5 + \sqrt{5})} - 12 \pi}
}
{20 \pi^3}
\]
and things only get worse from 
there.\footnote{In contrast, the characteristic function~$\chi$
of the region
\(|\v - \Lambda| \ge 1\) is  more pleasant. A current project
by undergraduates at the University of Chicago 
analyzing~$\chi$ more closely,  in order to exactly determine
the smallest limit point of the \emph{ramification
density} (see~\cite{Bilu}) which is some real number \(r < 0\),
and perhaps even the smallest non-zero
ramification density \(\rho_H > 0\).}

One alternative  is to use a more convenient function
that the characteristic function. There \emph{is} a function
which is transparently given to us in this situation, namely
\[\begin{aligned}
d(x,y,z) & = 4 - 4 \cos^2(\pi x) - 4 \cos^2(\pi y)
- 4 \cos^2(\pi z) - 8 \cos(\pi x) \cos(\pi y) \cos(\pi z), \\
d(t) & =  d\left(\frac{t}{p},\frac{t}{q},\frac{t}{r}\right).
\end{aligned}
\]
This function is  non-negative \emph{precisely}
on the region 
where we wish to ultimately find \emph{integer} points. 
(One way to translate between the roots of unity
picture and the lattice picture is given by~\cite[Thm~2]{WM},
but it is also transparent by~\eqref{robinsontwo}.)
However,
this function has the disadvantage that while it is positive
where we want it to be positive, it is also (even more) 
negative elsewhere, which complicates the analysis.
The situation is not hopeless, and indeed
 using \(d(t)\) we initially proved a version
 of Theorem~\ref{5lemma} with \(1+1/5\) replaced
 by \(1+1/13\), but the improvement to \(1/5\)
 will ultimately be important from a computational
 viewpoint.
 
 Our approach is to replace \(d(x,y,z)\)
 by the function
 \[\begin{aligned}
e(x,y,z) & = (8 \sin(\pi x) \sin(\pi y) \sin(\pi z))^2 \\
e(t) & = \left(8 \sin( \pi x/p) \sin(\pi y/q) \sin(\pi z/r)\right)^2.
\end{aligned}
\]
This function is now invariant under the larger lattice \(\Z^3\).
This function also has the advantage that it is \emph{non-negative}
everywhere and large exactly near the point
\((1/2,1/2,1/2)\),
so \(e(x,y,z)\) or rather its powers  play a convenient
proxy role for the characteristic function. Actually,
instead of powers of \(e(t)\), we  shall  consider
a minor variant --- powers of \((e(t) - 24)\) --- which asymptotically  is similar
but in practice  gives better results within the limits of our computations.

Note that Theorem~\ref{5lemma} is not true
if one replaces  \(\Lambda\) by \(\Z^3\). In  fact,
this fails not only finitely often but for all
points on the hyperplane
\[\frac{1}{p} + \frac{1}{q} + \frac{1}{r} = 0.\]
Naturally in our proof
we are forced to consider this hyperplane
separately and return to the lattice \(\Lambda\).
Moreover, we also
have to deal with a moderate number of explicit
triples \((p,q,r)\)  --- say on the order of a million ---  for those also we check
the less restrictive condition with \(\Lambda\)
rather than \(\Z^3\) as well.

\subsection{The start of the proof of Theorem~\ref{5lemma}}

\begin{proof}
By scaling, we may assume that \((p,q,r)\) are all 
non-zero integers. 
We introduce the quantity
\[e(t) = \left(8 \sin( \pi x/p) \sin(\pi y/q) \sin(\pi z/r)\right)^2.\]
We choose this function for a few reasons.
First,
the function \(8 \sin(\pi x) \sin(\pi y) \sin(\pi z)\) in
 obtains its global maximum
exactly at the points \((1/2,1/2,1/2) \bmod \Z^3\). Hence if we can show
that it has a large average over the line \((t/p,t/q,t/r)\), we can
hopefully show that this point is close  to \((1/2,1/2,1/2)\).
Second, the trigonometric form of \(e(t)\) means that we can indeed
compute the relevant integrals.
Clearly~$e(t)$ is periodic with period (dividing) \(pqr\).
Moreover, it is non-negative, and there is a trivial upper and lower bound
of \(0 \le e(t) \le 64\). 
We would like to find values of \(t\) for which \(e(t)\) it is as large as possible.
One way is to investigate the moments.
More precisely, we shall consider
\begin{equation}
\label{average}
E[(e(t) - 24)^{m}] = \frac{1}{pqr} \int_{0}^{pqr} \left(e(t) - 24\right)^{2m} dt,
\end{equation}
and then use the  bound:
\begin{equation}
\label{zero}
\max(|e(t) - 24|) \ge \sqrt[m]{E[(e(t) - 24)^{m}]}.
\end{equation}
Since \(e(t) \ge 0\) and is non-constant, if  the RHS of~\eqref{zero} is at least \(24\),
it gives a lower bound for \(e(t)-24\) 
for some point \(t \in \R\) rather than its absolute value.
However, we can compute the quantity in~\eqref{average}
by a simple integration. Certainly, using the exponential formula
for \(\sin \pi x\), we can write, with the sum over \(\lambda = (a,b,c) \in \Z^3\),
and with
\(\displaystyle{\v =  \left(\frac{1}{p},\frac{1}{q},\frac{1}{r} \right)}\),
\[\begin{aligned}
(e(t) -24)^m & = \sum_{\lambda \in \Z^3} C_{\lambda,m} 
\exp \left(2 \pi i \lambda \cdot \v \cdot t  \right) \\
 & = \sum_{(a,b,c) \in \Z^3} C_{(a,b,c),m} 
\exp \left( 2 \pi i \left( \frac{a}{p} + \frac{b}{q} + \frac{c}{r} \right) t \right),
\end{aligned}\]
where \(C_{\lambda,m} = 0\) for all but finitely many~\(\lambda\).
We plainly have
\[E[(e(t)-24)^m] = \sum_{\lambda \in \Z^3} C_{(a,b,c),m} \times
\begin{cases} 1 &  \lambda \cdot \v = 0, \\
0 & \text{otherwise}. \end{cases}
\]
In particular, the possible values of this average are as follows:
\begin{enumerate}
\item {\bf The generic case:\rm}  \(\lambda \cdot \v \ne 0\) for all \(\lambda \ne 0\)
with \(C_{\lambda,m} \ne 0\), in which case 
\[E[(e(t)-24)^m] = C_{0,m}.\]
\item  {\bf The co-dimension one case:\rm} The \(\phi\) for which  \(\phi \cdot \v = 0\) for some \(\phi \ne 0\)
with \(C_{\phi,m} \ne 0\) generate a one dimensional subspace of \(\R^3\), in which case
\[E[(e(t)-24)^m] = \sum_{\lambda \in \phi \Q \cap \Z^3} C_{\lambda,m}\]
where the sum is over rational multiples of (any such) \(\phi\).
\item  {\bf The co-dimension two  case:\rm} We have \(\phi \cdot \v = 0\) and \(\psi \cdot \v = 0\)
and \(C_{\phi,m}, C_{\psi,m} \ne 0\) where \(\phi\) and \(\psi\) are linearly
independent, in which case
\[E[(e(t)-24)^m] = \sum_{\lambda \in \phi \Q  \oplus \psi \Q \cap \Z^3} C_{\lambda,m}\]
where the sum is over rational multiples of \(\phi\) and \(\psi\) in \(\Z^3\).
\end{enumerate}
Since \(\v \ne 0\),  it cannot be orthogonal to  three linearly
independent vectors,  and hence these are the only possibilities.
Clearly if there are only finitely many \(C_{\lambda,m}\), there are finitely
many values of the sum, and in theory we can compute
all the possible values.
In order to do so in a convenient way, observe that, for \(m=1\), we have
the following values:
\[
\begin{aligned}
C_{(0,0,0),1} & =  -16, \\
C_{(\pm 1,0,0),1}  = C_{(0,\pm 1,0),1} = C_{(0,0,\pm 1),1} & = -4, \\
C_{(\pm 1,\pm 1,0),1}  = C_{(\pm 1,0,\pm 1),1} = C_{(0,\pm 1,\pm 1),1} & =2, \\
C_{(\pm 1, \pm 1, \pm 1),1} & = -1, \\
\end{aligned}
\]
and all other values are zero, and then the inductive formula:
\[C_{\lambda,m} = \sum_{\lambda' + \lambda'' = \lambda} C_{\lambda',m-1} C_{\lambda'',1},\]
together with the remark that \(C_{\lambda,m} = 0\) unless all the entries of \(\lambda\)
are bounded in absolute value by \(2m\), and the sum only needs to be evaluated
over the \(1 + 6 + 24 + 8 = (1+2)^3 = 27\) terms with \(C_{\lambda'',1} \ne 0\).

We now turn to computing the value of \(E[(e(t)-24)^{12}]\).

\begin{remark}
Originally we  computed the moments \(E[(e(t)-24)^{m}]\),
but the current method gives a better bound as soon as it gives a non-trivial
bound.   In fact, the optimal choice of \(\theta\) for
obtaining a bound for \(e(t)\) by considering twelfth powers
of the form \(E[(e(t)-\theta)^{12}]\)
is given by \(\theta \sim 24.5686 \ldots \) where \(\theta\) is
a root of a degree \(12\)  irreducible
polynomial in \(\Q[x]\).
The choice of the exponent \(12\) is close to the limit
of what one can compute with these direct computations.
 If necessary, we could
probably push this computation slightly further.
\end{remark}

\subsection{The generic case}
We find that 
\begin{equation} \label{value}
C_{0,12} =48938065973953984 \sim 4.8 \times 10^{16}.\end{equation}

\subsection{The co-dimension one case}
We are assuming that
the \(\phi\) for which  \(\phi \cdot \v = 0\) for some \(\phi \ne 0\)
with \(C_{\phi,m} \ne 0\) define a one dimensional subspace.
There are exactly \(24389\) vectors \(\lambda\) with \(C_{\lambda,12} \ne 0\),
which lie on \(6337\) different lines. In other words, our vector
\(\v\) has to lie on one of \(6337\) different hyperplanes, or else
\(E[(e(t)-24)^{m}]\) is given by the value in the generic case.
On these \(6337\) lines generated by vectors \(\phi\), the possible sums
\[\sum_{\lambda \in \phi \Q \cap \Z^3} C_{\lambda,m}\]
take on \(334\) different values, which, in increasing order, are given by:
\[
\begin{aligned}
& 14495307580935536, 14993075676088944, \\ & 16558274382015248, 17182507338527490, \\
& 17779880901663312, 20880831907741248, \\ &  21658015136699472, 23695946550006558, \\
& 25723702367996064, 
29439428154585408, \\ & 31449612130791616, 32684658530437488, \\
& 32786111957612896, 35548827082706688, \\ &  35574385398832360,
36520347436056576, \ldots 
\end{aligned}
\]
The vectors for which this quantity is strictly less
than the sixteenth term:
\begin{equation}
\label{smaller}
24^{12} = 36520347436056576  \sim 3.6 \times 10^{15}
< C_{0,12}
\end{equation}
 correspond to hyperplanes for which our
 computation of \(E[(e(t)-24)^{12}]\) does not give a 
 useful lower bound for the maximum of \(e(t)\) along this hyperplane.
Changing the signs of \(p,q,r\) appropriately
we may assume that \(a,b,c\) are non-negative.
The possible triples are below, along with a point \(P\)
on the hyperplane chosen to be close to \((1/2,1/2,1/2)\)
(in practice we tried to choose an optimal such point but
there is no need to prove we were successful):
  \begin{center}
\begin{tabular}{*4c}
 \multicolumn{4}{c}{\emph{Hyperplanes with \(E[(e(t)-24)^{12}] < 24^{12}\)}}\\
 \toprule
 triple & $(a,b,c)$ & $P$  & $|P - (1/2,1/2,1/2)|$ \\
\midrule
$1$ & $(0,0,1)$ & $(1/2,1/2,0)$ & $1/2$ \\  
$2$ & $(1,1,1)$ & $(1/3,1/3,1/3)$ & $1/2$ \\  
$3$ & $(0,1,2)$ & $(1/2,1/2,1/4)$ & $1/4$ \\  
$4$ & $(1,2,2)$ & $(1/2,1/2,1/4)$ & $1/4$ \\  
$5$ & $(0,1,4)$ & $(1/2,2/5,2/5)$ & $1/5$ \\  
$6$ & $(0,2,3)$ & $(1/2,2/5,2/5)$ & $1/5$ \\  
$7$ & $(1,1,3)$ & $(1/2,1/2,1/3)$ & $1/6$ \\  
$8$ & $(1,3,3)$ & $(1/2,1/2,1/3)$ & $1/6$ \\  
$9$ & $(2,2,3)$ & $(1/2,1/2,1/3)$ & $1/6$ \\  
$10$ & $(0,2,5)$ & $(1/2,3/7,3/7)$ & $1/7$ \\  
$11$ & $(0,3,4)$ & $(1/2,3/7,3/7)$ & $1/7$ \\  
$12$ & $(1,2,4)$ & $(1/2,1/2,5/8)$ & $1/8$ \\  
$13$ & $(1,4,4)$ & $(1/2,1/2,3/8)$ & $1/8$ \\  
$14$ & $(2,3,4)$ & $(1/2,1/2,3/8)$ & $1/8$ \\  
$15$ & $(2,2,5)$ & $(1/2,1/2,2/5)$ & $1/10$ \\  
$16$ & $(1,1,5)$ & $(1/2,1/2,2/5)$ & $1/10$ \\
\bottomrule
\end{tabular}
\end{center}
We shall now prove Theorem~\ref{5lemma}
explicitly in these sixteen classes cases, so that later
we will be free to assume that \(E[(e(t)-24)^{12}]\)
is bounded below by~\eqref{smaller}.
Note we are using Lemma~\ref{weyl} to see we are free to choose the signs of the hyperplane coefficients.
The case \((0,0,1)\) does not arise since it would force \(r = 0\).
The case \((1,1,1)\)  also requires exceptional treatment so we leave
it until the end. In all other cases,  there exists a point \(P\) on the hypersurface
with
\[|P - \w| \le \frac{1}{4}.\]
with \(\w = (1/2,1/2,1/2)\).
Thus it remains to determine all the lines \(L\) on \(H\) such that
\[|L - H| \le \left(|\w| - 1 - \frac{1}{5} \right) - \frac{1}{4} = \frac{1}{20}.\]
As usual,
the notation here means the minimum distance
of any point on \(L\) 
to any point on \(H\) in the \(| \cdot |_1\) norm.
The basic idea is that any line \(L\) on a
\emph{rank two} torus is easily seen to be qualitatively
close to any given point as soon as the slope
of the line is sufficiently large. In this way, we will
reduce the computation to finitely many lines which
we check using \texttt{magma}. We return to the proof.
Consider the hyperplane
\[H: a x  + b y + c z = 0 \subset \R^3/\Z^3\]
Without loss of generality, we can assume that \(c \ne 0\),
and that \((a,b,c)\) have no common factor.
More precisely, we shall assume that \(c\) is the largest
element of the triple.

\begin{lemma} \label{lemma:sur}
There is a finite surjection:
\[\pi: T:=\R/\Z \times \R/\Z \rightarrow H \subset \R^3/\Z^3\]
given by
\[(s,t) \rightarrow (cs,ct,-as - bt).\]
\end{lemma}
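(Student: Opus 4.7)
The plan is to verify four assertions implicit in the statement: that the formula \((s,t) \mapsto (cs, ct, -as-bt)\) descends to a well-defined map \(T \to \R^3/\Z^3\), that its image lies in \(H\), that the resulting map is surjective, and that every fiber is finite. The first two are formal. Shifting \(s \mapsto s+1\) or \(t \mapsto t+1\) changes the image in \(\R^3\) by \((c,0,-a)\) or \((0,c,-b)\), both of which lie in \(\Z^3\), so the formula descends modulo \(\Z^3\); and the identity \(a(cs) + b(ct) + c(-as - bt) = 0\) shows the image lies in \(H\).

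For surjectivity, fix a representative \((x,y,z) \in \R^3\) of a class in \(H\), so \(ax + by + cz = m\) for some \(m \in \Z\). One seeks \((s,t) \in \R^2\) and \((n_1, n_2, n_3) \in \Z^3\) satisfying \(cs = x+n_1\), \(ct = y+n_2\), and \(-as-bt = z+n_3\). Substituting the first two equations into the third and clearing denominators reduces the problem to the single linear Diophantine equation \(a n_1 + b n_2 + c n_3 = -m\), which is solvable in \(\Z^3\) precisely because \(\gcd(a,b,c) = 1\); any such solution produces a preimage \(s = (x+n_1)/c\), \(t = (y+n_2)/c\) in \(T\). This is the one place where the hypothesis \(\gcd(a,b,c) = 1\) is used.

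Finally, \(\pi\) is a homomorphism of compact abelian Lie groups, so every fiber is a coset of \(\ker \pi\). A pair \((s,t) \in T\) lies in \(\ker \pi\) iff \(cs, ct \in \Z\) (forcing \(s = k/c\), \(t = l/c\) with \(k, l \in \Z/c\Z\)) and \(ak + bl \equiv 0 \pmod{c}\). The latter congruence defines the kernel of the map \((\Z/c\Z)^2 \to \Z/c\Z\) sending \((k,l) \mapsto ak + bl\), whose order equals \(c \cdot \gcd(a,b,c) = c\); hence every fiber of \(\pi\) has exactly \(c\) elements. The whole argument is routine, with no real obstacle: the only subtle point is the invocation of Bezout's identity at the surjectivity step.
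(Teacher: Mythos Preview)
Your proof is correct and follows essentially the same approach as the paper: both reduce surjectivity to a Bezout argument using \(\gcd(a,b,c)=1\), though the paper phrases it as showing the image contains the subgroup generated by \((0,0,1/c)\) (via integers \(i,j\) with \(ai+bj=\gcd(a,b)\) coprime to \(c\)) rather than directly solving your Diophantine equation \(an_1+bn_2+cn_3=-m\). You also compute the fiber size explicitly as \(c\), which the paper only mentions as an aside after the proof.
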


\begin{proof}  It is easy to see that the map is well-defined.
Let \((x,y,z)\) be a point on \(H\).
We can find
\(s,t\) so that \((s,t) \mapsto (x,y,z') \in H\), and then it follows that
 \(c(z-z') = 0 \in \R/\Z\), and so \(z-z'\) is a multiple of \((0,0,1/c)\).
 Since \(\pi\) is linear,  it suffices to show that its  image  contains
 the subgroup generated by this element.
 Let \(s = i/c\) and \(t = j/c\) for integers \(i\) and \(j\). Then the image under 
\(\pi\) will generate such a subgroup as long as \(ai + bj\) is prime to \(c\).
But we can find a choice of \(i\) and \(j\) so that \(ai+bj\)  is  equal to the greatest
common divisor \((a,b)\), and this is prime to \(c\) since we are assuming
that \((a,b,c)\) do not all have a common factor.
\end{proof}

We can also show that the map \(\pi\) will have degree \(c\),
but that is not relevant for our purposes;
 The only fact we need
is that any line on \(H\) through the origin is the image of a line \(L\)
on \(T\) through the origin (namely, \(\pi^{-1}(L)\)) which follows
directly from the fact that \(\pi\) is surjective by Lemma~\ref{lemma:sur}.
Such a line \(L\) has the form
\begin{equation}
\label{line}
(s,t) = (uz,vz), \ (u,v) = 1,
\end{equation}
we define the height \(h(L)\) of this line to
be \(h(L) = \max(|u|,|v|)\).

\begin{lemma} \label{elementary} Let \(P \in H\),where \(H\)
is one of the \(14\) hyperplanes numbered \(3\) to \(16\)  above. If the height of \(L\) satisfies
\(h(L) \ge 70\),
then there is a point \(Q \in \pi(L)\)  so that 
\[\displaystyle{|P -Q|  \le \frac{1}{20}}.\]
If \(h(L) \ge 84\), then this bound can be improved to 
\(\displaystyle{\frac{1}{24}}\).
\end{lemma}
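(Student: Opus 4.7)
I will use the surjection $\pi \colon T \to H$ of Lemma~\ref{lemma:sur} to reduce the problem to approximating a point of $T$ by the line $L$, then transfer the approximation back to $H$. Given $P \in H$, the idea is to lift $P$ to a preimage $(s_0, t_0) \in T$ and find $(s, t) \in L$ minimizing the $\ell^1$-distance from $\pi(s, t)$ to $P$ in $H$.

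The main tool is an elementary distance formula: the minimum $\ell^1$-distance on $T$ from $(s_0, t_0)$ to the line $L = \{(uz, vz) \bmod \Z^2\}$ (with $(u,v)=1$) equals $|vs_0 - ut_0 - \Z|/h$ where $h = \max(|u|, |v|)$, and the minimizer $(\Delta s, \Delta t)$ is supported on the single coordinate indexed by $\max(|u|, |v|)$. Assuming $|u| \ge |v|$, so $\Delta s = 0$ and $|\Delta t| = |\alpha - \Z|/|u|$ with $\alpha := vs_0 - ut_0$, the image distance in $H$ becomes
\[
|\pi(s_0, t_0 + \Delta t) - P| = |c \Delta t - \Z| + |b \Delta t - \Z| = (c + |b|)\,|\alpha - \Z|/h,
\]
provided $h$ is large enough to avoid wrap-around (which is automatic for $h \ge 70$). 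This gives the uniform bound
\[
|P - \pi(s, t)| \le (c + \max(|a|, |b|))\,|\alpha - \Z|/h.
\]

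Next I exploit the $c$-fold freedom in the choice of preimage of $P$: varying over the fiber $\pi^{-1}(P)$ translates $\alpha$ by the image in $\R/\Z$ of the kernel $K \subset T$ under $(\sigma, \tau) \mapsto v\sigma - u\tau$, which is a cyclic subgroup of $\tfrac{1}{c}\Z/\Z$ of order $c/g$ with $g := \gcd(c, au + bv)$. When $g < c$, one can choose a preimage with $|\alpha - \Z| \le g/(2c)$, and the resulting distance bound is safely under $1/20$ for $h \ge 70$, since $c + \max(|a|, |b|) \le 8$ and $c \le 5$ across the 14 hyperplanes in the table.

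\textbf{The main obstacle} is the ``bad'' case $g = c$ (i.e., $c \mid au + bv$), where all $c$ preimages of $P$ yield the same $\alpha$ and the averaging step affords no improvement. In this regime I bound $|\alpha - \Z|$ directly using the specific rational structure of $P$: each point $P$ in the table has denominator at most $8$, so the preimages lie in a small-denominator subgroup of $T$ and $\alpha$ itself lies in a small-denominator subgroup of $\R/\Z$. A case-by-case arithmetic check, using the coprimality $\gcd(u, v) = 1$ together with the height condition $h \ge 70$ to exclude those $(u, v)$ that would force $|\alpha - \Z| = 1/2$, shows that the product $(c + \max(|a|, |b|)) \cdot |\alpha - \Z|$ is bounded by $7/2$ in every case (with the worst arising for hyperplane~10). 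Consequently the distance is at most $7/(2h)$, which is $\le 1/20$ for $h \ge 70$ and $\le 1/24$ for $h \ge 84$, as required.
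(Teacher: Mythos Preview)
The paper's argument is far simpler than yours. It picks \emph{any} single preimage of~$P$, matches the $T$--coordinate corresponding to $h=\max(|u|,|v|)$ exactly so that $R-\pi^{-1}(P)=(x,y)$ has one entry zero and the other at most $1/(2h)$, and pushes forward via~$\pi$ to get
\[
|\pi(R)-P|\le \frac{\max(a+c,\,b+c)}{2h}\,.
\]
The claim then follows from $\max(a+c,b+c)\le 7$ across the fourteen hyperplanes. No fibre averaging, no analysis of $g=\gcd(c,au+bv)$, and no use of the particular point~$P$ from the table.

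That said, your extra machinery is not wasted. The paper's assertion that $\max(a+c,b+c)\le 7$ is actually wrong for hyperplane~13, where $(a,b,c)=(1,4,4)$ gives $b+c=8$; and the lemma, read literally for arbitrary $P\in H$, fails there: with $P=(0,0,\tfrac12)\in H_{13}$ and $(u,v)=(72,1)$ one computes that the minimum distance from~$P$ to $\pi(L)$ is exactly $\tfrac1{18}>\tfrac1{20}$. Your fibre-averaging step (the case $g<c$), together with your $g=c$ analysis specialised to the tabulated $P=(\tfrac12,\tfrac12,\tfrac38)$, does repair this. So your longer route happens to close a genuine slip in the paper's proof.

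The gap in your write-up is that the ``case-by-case arithmetic check'' in the $g=c$ regime is only asserted, not carried out, and it relies on the specific denominators of the tabulated~$P$ rather than working for arbitrary $P\in H$. Since the lemma is only ever applied to those particular points, this is fine for the paper's purposes, but you should either restate the lemma for the tabulated~$P$ and actually perform the checks, or observe that the crude bound $7/(2h)$ already suffices for thirteen of the fourteen hyperplanes and reserve your refined method for $(1,4,4)$ alone.
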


\begin{proof}
Let \(\pi^{-1}(P)\) be (any) choice of pre-image of \(P\).
Let \(L\) be of the form~\eqref{line}, and assume that \(h(L)=u\).
We may certainly find a \(z \in \R\) so that the first coordinate
of \((uz,vz)\) is equal to the first coordinate of \(\pi^{-1}(P)\).
If we then replace \(z\) by \(z + 1/u\), we may vary
the second coordinate by \(v/u\) and then by repeating this and
using that \((u,v)=1\), we may find a point so that
\[ |R - \pi^{-1}P| \le \frac{1}{2h}.\]
and moreover that the first coordinate is zero. If \(h(L)=v\),
the exact same argument gives a point where the second coordinate is zero.
If we write \(R - \pi^{-1}(P) = (x,y)\), 
so \(xy = 0\) and \(\max(|x|,|y|) \le (2h)^{-1}\), we find that
\[
\begin{aligned}
|\pi(R) - P| & \le
c x 
+ c y
+ a x 
+ by \\
& = (a+c) x + (b+c) y \\
 & \le \frac{1}{2h} \max \left(a+c,b+c \right)
\end{aligned}
\]
But for the \(14\) hyperplanes above we find that
the maximum \(b+c\) and \(a+c\) is \(7\),
and we are assuming that \(h \ge 70\), and so \(2h \ge  140\),
and so
\[|\pi(R) - P| \le \frac{7}{140} = \frac{1}{20}.\]
Replacing \(70\) by \(84\) gives the improved bound.
\end{proof}

 As a consequence, for each of the \(14\) hyperplanes
 \(H\) we are currently
 considering, if we take any line on \(H\)
 whose pullback \(L\) to \(T\) has \(h(L) \ge 70\),
 then there exists a point \(Q = \pi(R)\) on \(L\)
 with:
 \[ |Q - P| \le \frac{1}{20}, \quad
 |P - \w| \le \frac{1}{4},\]
 and hence
 \[ \begin{aligned}
 |Q -\w| & \le \frac{1}{20} + \frac{1}{4} = \frac{3}{10}, \\
 |Q - \Z^3|  & \ge |\w - \Z^3| - |Q - \w| \ge \frac{3}{2}
 - \frac{3}{10} = 1 + \frac{1}{5}. \end{aligned}\]
 With the stricter condition \(h(L) \ge 84\), this
 bound improves to \(1 + 5/24\).
 For the remaining lines \(L\) of height \(h(L) < 70\),
we project them
 to \(H\) to get a line in \(\R^3/\Z^3\), 
 and hence an explicit triple \((p,q,r)\),
 and then we check Theorem~\ref{5lemma} explicitly
 for each of these triples.
 More explicitly, if we take the line \((uz,vz)\) on \(T\),
 then on \(H\) we explicitly get  the line \(L\)  on 
 \(H\) corresponding to
 \[ z \left( c u , c v , - b v  - a u  \right).\]
 For each of the fourteen \(H\), we need only consider the
lines on \(T\) whose height is at most \(70\).
There are \(5878\)  rational numbers with
height less than \(70\), and thus  taking into account
 \(0\) and \(\infty\) this
gives \(5880\) lines we need to check on each \(H\).
In fact there is some duplication, and after permuting the
orders and taking absolute values there are only
\(52258\) lines which we need to check in total.
If we also consider the lines of height less than \(84\),
the total number of such lines increases to \(76450\).
In these cases,
we check directly that there exists a \(t\) such that
\[ \left| \left( \frac{t}{p},\frac{t}{q},\frac{t}{r} \right) - \Lambda \right|
\ge 1 + \frac{5}{24} > 1 + \frac{1}{5}\]
in all cases we find such a \(t\), except
for the triples \((1,2,6)\), \((2,3,6)\), and \((3,4,12)\).
For these cases, the bound \(1 + 1/5\) is achieved
with \(t = 12/5\), \(t =6/5\), and \(t = 24/5\) respectively.
Note in all these computations we check
the condition against \(\Lambda\) rather than
\(\Z^3\).

This brings us to the hyperplane \(H\) corresponding to
\[ \frac{1}{p} + \frac{1}{q} + \frac{1}{r} = 0.\]
This hyperplane is different in that no points
are within anything less than \(1/2\)
of \((1/2,1/2,1/2)\).
We can and will assume in this section that \((p,q,r)\) have no common factor,
since Theorem~\ref{5lemma} is insensitive to scaling these parameters.
Moreover, by symmetry, we can assume that \((p,q,r)\) and positive and that
\[\frac{1}{p} = \frac{1}{q} + \frac{1}{r}.\]
Let us write \(q = p + s\), so
\[r  = \frac{p(p+s)}{s}.\]
For this to be an integer, \(s\) must divide \(p\), but then we find that 
\(p,q,r\) are all divisible by \(s\), and so the only possibility is that
\[ \left( \frac{1}{p},\frac{1}{q},\frac{1}{r} \right)
 = \left( \frac{1}{p}, \frac{1}{p+1},\frac{1}{p(p+1)}\right).\]
 We are free to assume that \(p > 0\).
 We now choose a suitable \(t\).
 Let
 \[t =  \frac{p(p+1)}{3} + 
 \begin{cases} 
 \displaystyle{\frac{p}{3}}, & p \equiv 0 \bmod 3, \\[10pt]
 0, & p \equiv 1 \bmod 3, \\[10pt]
\displaystyle{ - \frac{p+1}{3}}, & p \equiv 2 \bmod 3. \end{cases}
 \]
 and then
 \[ | t \v - \Lambda| = 1 + \frac{1}{3} - 
2 \begin{cases} 
 \displaystyle{\frac{1}{3(p+1)}}, & p \equiv 0 \bmod 3, \\[10pt]
 0, & p \equiv 1 \bmod 3, \\[10pt]
 \displaystyle{\frac{1}{3p}}, & p \equiv 2 \bmod 3. \end{cases}
 \]
 from which  Theorem~\ref{5lemma}
 (with the improved bound \(1 +5/24\) follows in these cases
 unless \(p = 2,3\).
 The remaining cases \(p=2\) and \(p=3\)
corresponding to \((p,q,r)\) being \((2,3,6)\) and \((3,4,12)\)
respectively which we have already verified.

\begin{center}
\begin{figure}[H]
  \includegraphics[width=6cm]{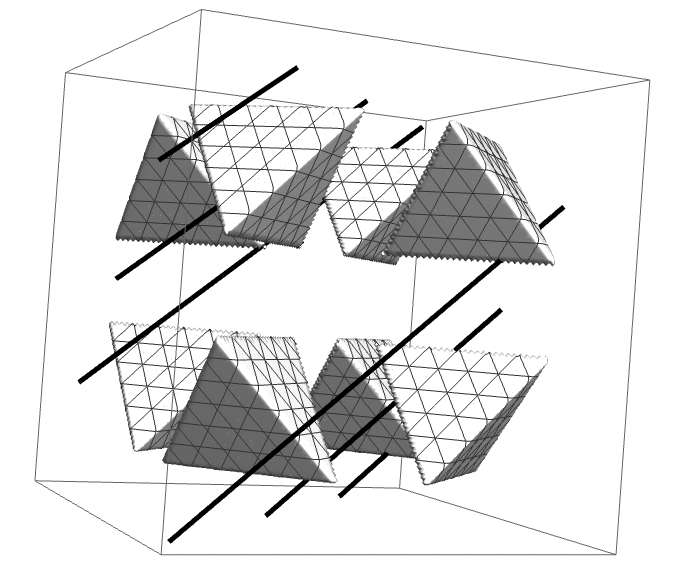}
  \caption{The  region \(|(x,y,z) - \Lambda| \ge 6/5\)
  in \(\R^3/(2 \Z)^3\) together with the line \((t/3,t/12,t/4)\).}
  \label{fig:3Db}
  \end{figure}
\end{center}

\subsection{The co-dimension two case} \label{sec:codim2}
 For the
\[ \binom{6337}{2} = 20075616\]
pairs of different lines, whenever they form a vector space of dimension two,
there is a unique line in \(\R^3\) which is orthogonal to both of them
given by the cross product. After normalizing these cross products
so they are either zero or normalized so that all terms
are non-negative 
and  the entries are coprime integers,
there are \(266743\) terms (these numbers  include the zero vector as well).
The entries with at least one zero do not correspond to any \((p,q,r)\).
If \((a,b,c)\) is any other triple, then we may take
\((p,q,r) = (bc,ac,ab)\) up to scalar.
For all such triples,
we  choose (up to) \(1000\) random
points \(t\) and stop if we have a value with
\[ \left| \left( \frac{t}{p}, \frac{t}{q}, \frac{t}{r} \right) - \Lambda \right| \ge 
1 + \frac{5}{24} > 1 + \frac{1}{5}.\]
This succeeds in every case except for \((p,q,r) = (1,2,6)\), \((2,3,6)\) and \((3,4,12)\).
This takes \(70\) seconds with one
core in \texttt{magma}.
 The  remaining cases also come up when
we consider the low height lines on the other exceptional hyperplanes.
Hence Theorem~\ref{5lemma}
is proved in the exceptional case.

\subsection{Relating bounds for 
\texorpdfstring{\(e(t)\)}{et} and 
\texorpdfstring{\(|t \v - \Lambda|\)}{tvminusLambda}}
We are now reduced to the case where we may
assume (see~\eqref{smaller}) that
\begin{equation}
\label{best}
E[(e(t)-24)^{12}] \ge 24^{12}
\end{equation}
In particular, there exists a \(t\) with \(e(t) \ge 48\).

We have the following:
\begin{lemma} \label{approx}
Let \(\varepsilon \in [0,1/6]\).
Let \(\v  = (x,y,z) \in \R^3/\Z^3\),and 
suppose that
\[ | \v -\Z^3 | \le 1 + 3 \varepsilon.\]
Then
\begin{equation}
\label{epsbound}
e(x,y,z) = (8 \sin(\pi x) \sin(\pi y) \sin(\pi z))^2  \le
64 \sin^6 \left(\frac{\pi}{3} + \pi \varepsilon \right).
\end{equation}
\end{lemma}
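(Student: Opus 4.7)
The plan is to reduce the inequality to a one--variable concavity argument via the symmetries of the lattice and the trigonometric integrand.

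First I would use Lemma~\ref{weyl} (the action of $\Z/2\Z \wr S_3$, which preserves both $\Z^3$ and the $|\cdot|_1$ norm, and which preserves $e$ because it only permutes the coordinates and changes the signs of $\sin(\pi x_i)$ whose squares appear in $e$) to reduce to the fundamental domain
\[
0 \le x, y, z \le \tfrac{1}{2}.
\]
In this region the closest point of $\Z^3$ to $(x,y,z)$ in the $\ell^1$ norm is the origin, so the hypothesis $|\v - \Z^3| \le 1 + 3\varepsilon$ simplifies to
\[
x + y + z \le 1 + 3\varepsilon.
\]
Also, each $\sin(\pi x_i)$ is non--negative on $[0,\tfrac12]$, so $8\sin(\pi x)\sin(\pi y)\sin(\pi z) \ge 0$ and the claim \eqref{epsbound} is equivalent to bounding the (positive) product $\sin(\pi x)\sin(\pi y)\sin(\pi z)$ by $\sin^3(\pi/3 + \pi\varepsilon)$.

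Next I would observe that $\sin(\pi t)$ is strictly increasing on $[0,\tfrac12]$, so the constrained maximum of the product is attained on the boundary $x+y+z = 1+3\varepsilon$ (since one can only increase the product by increasing each $x_i$ towards $1/2$ up to the sum constraint). On this affine slice I appeal to concavity of $\log \sin(\pi t)$ on $(0,\tfrac12]$, which follows from
\[
\frac{d^2}{dt^2} \log \sin(\pi t) = -\frac{\pi^2}{\sin^2(\pi t)} < 0.
\]
Jensen's inequality then yields
\[
\sin(\pi x)\sin(\pi y)\sin(\pi z) \le \sin^3\!\left(\pi \cdot \tfrac{x+y+z}{3}\right) = \sin^3\!\left(\tfrac{\pi}{3} + \pi\varepsilon\right),
\]
where the critical point $x=y=z = \tfrac13 + \varepsilon$ is admissible precisely because the hypothesis $\varepsilon \le 1/6$ ensures $\tfrac13 + \varepsilon \le \tfrac12$. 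Squaring and multiplying by $64$ gives~\eqref{epsbound}.

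There is essentially no obstacle in this proof; the only delicate point is verifying that the constrained optimum actually lies in the interior of $[0,\tfrac12]^3$ rather than on the face $x_i = 1/2$, and this is exactly ensured by the hypothesis $\varepsilon \le 1/6$. (For $\varepsilon = 1/6$ the bound $\sin^6(\pi/2) = 1$ and the hypothesis $|\v - \Z^3| \le 3/2$ are both vacuous, which gives a sanity check at the endpoint.)
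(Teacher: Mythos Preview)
Your proposal is correct and follows essentially the same route as the paper: reduce by symmetry to $0 \le x,y,z \le 1/2$, observe the constraint becomes $x+y+z \le 1+3\varepsilon$, push to the boundary by monotonicity of $\sin$, and then optimise on the affine slice. The only difference is that the paper carries out the last step with Lagrange multipliers (finding that the critical equations force pairs of coordinates to be equal or equal to $1/2$), whereas you use the log-concavity of $\sin(\pi t)$ and Jensen; your version is arguably cleaner, since Jensen gives the bound in one line without having to separately inspect the face $x_i = 1/2$.
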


\begin{proof}
By symmetry, we may assume that \(0 \le x,y,z \le 1/2\).
Since \(\sin^2( \pi t)\) is increasing in this range, and
the function on the RHS 
of~\eqref{epsbound}
is also increasing  for \(\varepsilon\) in
this interval, we may assume that \(|tv - \Lambda| = 1 + 3 \varepsilon\)
and then maximize \(e(t)\) in the parameters  \((x,y,z)\).
The choice of \((x,y,z)\) means that \((0,0,0)\) is the closest lattice point, 
and so
\[x+y+z=1 + 3 \varepsilon.\]
Let us try to maximize
\[e(x,y,z) = 64 \sin^2(x \pi) \sin^2(y \pi) \sin^2(z \pi)\]
subject to the given constraints and show that \(x=y=z\).
We can use the method of Lagrange multipliers, that is, to consider
\[e(x,y,z) + \lambda (x+y+z - 1 - 3 \varepsilon),\]
and thus we find
\[ 
\begin{aligned}
\lambda + 128 \pi \cos(\pi x) \sin(\pi x) \sin(\pi y)^2 \sin(\pi z)^2 &  = 0, \\
\lambda + 128 \pi \cos(\pi y) \sin(\pi y) \sin(\pi x)^2 \sin(\pi z)^2 &  = 0, \\
\lambda + 128 \pi \cos(\pi z) \sin(\pi z) \sin(\pi x)^2 \sin(\pi y)^2 &  = 0, \\
x + y + z  & =  1 + 3 \varepsilon.
\end{aligned}
\]
The difference of the first two quantities is
\[- 128 \pi \sin(\pi x) \sin(\pi y) \sin(\pi(x-y)) \sin(\pi z)^2.\]
From this and is symmetrizations we deduce that any pair
of parameters in
\(x,y,z\) are either equal to each other, or they are equal to \(1/2\).
Considering the cases in turn we are led to the result.
\end{proof}

Returning to our argument, by~\eqref{best} we know
  there exists a point~\(t\) for which
\begin{equation}
\label{contra}
e(t) > \sqrt[12]{24^{12}} + 24  =  48,
\end{equation}
By Lemma~\ref{approx}, we deduce
that either  $|t \v - \Lambda| \ge | t \v - \Z^3|  \ge 1 + 1/5$,
or we must have
\[e(t) \le 64 \sin(2 \pi/5)^6
= (5 (5 + 2 \sqrt{5})) = 47.3606\ldots < 48.\]
But this is
 contradicts~\eqref{contra}, completing the proof
 of Theorem~\ref{5lemma}.
 
 Note that in the exceptional cases, we proved (with
 finitely many explicitly given exceptions) the lower bound
 \(1 + 5/24\). We observe that  
 the inequality \(e(t) \ge 48\) leads to the improved bound
\[\frac{3}{2} - \frac{
\displaystyle{3 \arccos \left( \frac{3^{1/6}}{2^{1/3}}\right)}
}{\pi} =  1.206646\ldots < 1.20833\ldots = 1 + \frac{5}{24},\]
which justifies Remark~\ref{improved}.
\end{proof}

\subsection{The geometry of numbers}
\label{sec:minkowski}

This section is not used in the paper, but is here as a complement
to the rest of \S~\ref{sec:fourier}, and gives a second approach to
proving Theorem~\ref{5lemma}. It ultimately reduces
Theorem~\ref{5lemma}
 to computations on hyperplanes of a sort already
 seen in \S~\ref{sec:fourier}.
 These ideas are inspired by the work of Jones, particularly~\cite{Jones1}.
We recall:

\begin{lemma} \label{5lemmaagain}
Let~$(p,q,r)$ be integers. Then there exists a real number $t$ such that
\[ \left| \left( \frac{t}{p}, \frac{t}{q}, \frac{t}{r} \right) - \Lambda \right| \ge 1 + \frac{1}{5}.\]
\end{lemma}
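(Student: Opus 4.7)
The plan is to pursue a geometry-of-numbers argument in the spirit of Jones~\cite{Jones1}. After rescaling, we may assume that \((p,q,r)\) are coprime integers, and the goal becomes showing that the image \(\bar{L}\) of the line \(L = \R\v\) in the torus \(\R^3/\Lambda\) passes within \(L^1\)-distance \(3/10\) of the deep hole \(\w = (1/2,1/2,1/2)\), since the maximum value of \(|\v - \Lambda|\) is \(3/2\) (attained at \(\w\)).

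First, I would reformulate the problem by projecting \(\R^3\) onto the two-dimensional quotient \(V = \R^3/L\), equipped with the quotient \(L^1\)-norm. Write \(\pi : \R^3 \to V\) for this projection; since \(\v\) has rational direction, \(L \cap \Lambda\) has rank one and \(\pi(\Lambda)\) is a rank-two lattice in \(V\). The statement to prove becomes: the distance from \(\pi(\w)\) to \(\pi(\Lambda)\), measured in the quotient \(L^1\)-norm, is at most \(3/10\). This is a covering-radius problem for a two-dimensional lattice in a normed plane.

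Next, I would bound the covering radius of \(\pi(\Lambda)\) in this quotient norm. Its co-volume can be computed directly in terms of the primitive generator of \(L \cap \Lambda\) and \(\lcm(p,q,r)\), and tends to infinity as \(\lcm(p,q,r)\) does. By Minkowski's second theorem, combined with Mahler's duality theorem (which exchanges the \(L^1\) unit ball with the \(L^\infty\) unit ball in the dual), one can bound the successive minima of \(\pi(\Lambda)\) from above, and hence the covering radius as well. For generic directions \(\v\), this bound alone is sharper than \(3/10\), giving the conclusion.

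The main obstacle, I expect, is twofold. First, the \(L^1\) norm gives a polyhedral unit ball whose shape on the quotient \(V\) depends on the direction of \(\v\), so a clean Minkowski bound forces one to track not merely the co-volume but also the orientation of the quotient body; this is where Mahler duality is most useful, reducing questions about the \(L^1\)-geometry of \(\pi(\Lambda)\) to questions about the \(L^\infty\)-geometry of the dual lattice, which is simpler because its unit ball is a cube. Second, the constant \(1 + 1/5\) is tight for the triples \((1,2,6)\), \((2,3,6)\), and \((3,4,12)\), so any clean geometric bound must fail precisely on a finite set of low-height directions. These exceptional \(\v\) correspond to hyperplanes \(a/p + b/q + c/r = 0\) with small \((a,b,c)\), exactly as in Section~\ref{sec:fourier}, and the residual analysis would follow the explicit reduction to lines in \(\R/\Z \times \R/\Z\) used in Lemma~\ref{elementary}. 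Thus the geometry-of-numbers route is conceptually cleaner but, as the paper observes, requires essentially the same computational backbone to bring to a finite conclusion, which is why the Fourier route was ultimately carried out.
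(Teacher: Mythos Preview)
Your strategy is essentially the one the paper sketches in \S\ref{sec:minkowski} as an alternative to the Fourier proof it actually carries out, though you package it via the two-dimensional quotient $V=\R^3/L$ whereas the paper works with the three-dimensional extension $\Phi=\Z(\v+\w)+\Z^3$ of $\Z^3$ and applies Mahler duality there directly. Both versions arrive at the same dichotomy --- either the line comes within $3/10$ of $\w$, or $(p,q,r)$ lies on one of finitely many hyperplanes $a/p+b/q+c/r=0$ with small $(a,b,c)$ --- and both defer the hyperplane analysis to the computations already done in \S\ref{sec:fourier}.

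One concrete error: the covolume of $\pi(\Lambda)$ tends to \emph{zero} as $\lcm(p,q,r)\to\infty$, not infinity; the primitive generator of $L\cap\Lambda$ has length of order $n$, and you divide the fixed covolume of $\Lambda$ by it. That is the helpful direction, but be aware that small covolume alone does not bound the covering radius: Minkowski's second theorem only controls the product $\lambda_1\lambda_2$, and a skew lattice can have $\lambda_2$ arbitrarily large. The mechanism that actually works is Mahler duality by itself: $\lambda_2(\pi(\Lambda))\cdot\mu_1\bigl((\pi(\Lambda))^\vee\bigr)$ is bounded by an absolute constant, and the dual lattice $(\pi(\Lambda))^\vee$ sits inside $\Lambda^\vee\cap\v^\perp$, so a short dual vector is precisely a small integer relation $a/p+b/q+c/r=0$. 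Thus ``no relation with $\max(|a|,|b|,|c|)\le C$'' forces $\lambda_2$ (hence the covering radius) below $3/10$ once $C$ is chosen appropriately; this is exactly the lemma proved in \S\ref{sec:minkowski} with $C=20$. The paper's three-dimensional formulation with the shift by $\w=(\tfrac12,\tfrac12,\tfrac12)$ also handles the $\Lambda$-versus-$\Z^3$ parity issue via the observation that at least one of the three successive-minima vectors of $\Phi$ carries an odd coefficient of $\v+\w$; your quotient formulation builds $\Lambda$ in from the start, which is equally valid.
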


Note that this theorem is insensitive to a linear scaling
of the integers \((p,q,r)\).

Let \((p,q,r)\) be a triple of integers, not all exactly divisible by \(2\). Let
\[\v = (1/p,1/q,1/r) + (1/2,1/2,1/2),\]
and let \(\Lambda\) be the lattice \(\v + \Z^3\).
Let~\( | \cdot | = |\cdot|_1\).
The covolume of the lattice is \(n^{-1}\) where \(n\) is
 the smallest integer
so that \(n \v \in \Z\). Either at least one of \(p\), \(q\), and \(r\) is odd,
in which case \(n = [2,p,q,r]\),
or one of \(p\), \(q\), or \(r\) is divisible by \(4\),
in which case \(n = [p,q,r] = [2,p,q,r]\).
 In particular, \(n\) is always even.

Let~$\Phi^{\vee}$ denote the dual lattice, that is, the set of
vectors~$\w$ such that
$$\w \cdot \v \in  \Z$$
for all~$\v \in \Z$.
Note that~$\Phi^{\vee}$ contains~$n \Z^3$, but the covolume of~$\Phi^{\vee}$
is~$n$. (The precise covolumes will not be relevant for our computations,
however.)
Let \(\v_1\) denote a vector in \(\Phi\) of smallest norm,
and then \(\v_2\) the next smallest norm vector not in the space
generated by \(\v_1\), and then \(\v_3\) the final such vector.
These are norms with respect to \(F =  | \cdot |_1\).
Since \(n\) is even, at least one such vector must involve 
an odd multiple of \(\v\).
The dual lattice acquires a polar distance function \(F^*\) which
is \(| \cdot |_{\infty}\), the $\ell_{\infty}$-norm.
Correspondingly, let \(\w_i\) denote such a choice of vectors
in the dual lattice, and let the lengths of the \(\v_i\) and
the \(\w_i\) be \(\lambda_i\) and \(\mu_i\) respectively with
respect to the norms \(| \cdot |_1\) and \(| \cdot |_{\infty}\).

\begin{lemma}
Suppose that there do not exist integers \(a,b,c\), not all zero, with
\[\begin{aligned} \frac{a}{p} + \frac{b}{q} + \frac{c}{r}  & = 0,  \\
 \max(|a|,|b|,|c|) &  \le 20.
\end{aligned}
\]
Then Lemma~\ref{5lemmaagain} holds for \((p,q,r)\).
\end{lemma}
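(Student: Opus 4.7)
The plan is to deduce this from Mahler's transference theorem applied to the lattice \(\Lambda = \Z\v + \Z^3\) introduced above. Write \(\v_0 = (1/p, 1/q, 1/r)\) and let \(\Lambda_{\mathrm{ev}} \subset \Z^3\) denote the index-two even-sum sublattice appearing in the conclusion (the \(\Lambda\) of Lemma~\ref{5lemmaagain}). The hypothesis says that the line \(\R \v_0\) avoids every rational \(2\)-plane \(\{\w \in \R^3 : \w \cdot \v_0 = 0\}\) generated by an integer \(\w\) of \(\ell_\infty\)-norm at most \(20\). After a short combinatorial adjustment handling the half-integer shift in \(\v = \v_0 + \tfrac{1}{2}(1,1,1)\), this translates into a lower bound \(\mu_1 > 20\) on the first successive minimum of the dual lattice \(\Phi^{\vee}\) with respect to \(|\cdot|_\infty\).

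Mahler's inequality for dual lattices with dual norms \(|\cdot|_1\) and \(|\cdot|_\infty\) then yields in dimension three
\[
\lambda_3(\Lambda, |\cdot|_1) \cdot \mu_1 \le n! = 6,
\]
giving three \(\Z\)-linearly independent vectors \(\v_1, \v_2, \v_3 \in \Lambda\) with \(|\v_i|_1 \le 3/10\). Since \(3/10 < 1\) is smaller than the minimum \(\ell_1\)-length of any nonzero element of \(\Z^3\), each \(\v_i = m_i \v + \mu_i\) has \(m_i \ne 0\). A covolume check using that \(n = [2,p,q,r]\) is even rules out the possibility that all \(m_i\) are even: otherwise \(\v_1, \v_2, \v_3\) would span a sublattice of \(2\Z\v + \Z^3 \subset \Lambda\) whose covolume \(\le \prod |\v_i|_1 \le 27/1000\) would be at least \(\operatorname{covol}(2\Z\v + \Z^3) = 2/n\), forcing \(n \le 74\); those finitely many small-\(n\) triples \((p,q,r)\) can be treated by direct computation analogous to~\S\ref{sec:codim2}.

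Pick \(\v_\ast = m\v + \mu\) with \(m\) odd and \(|\v_\ast|_1 \le 3/10\). Set \(t := m\) and \(P := -\mu - \tfrac{m}{2}(1,1,1)\). Unfolding \(\v = \v_0 + \tfrac12(1,1,1)\) yields the identity \((t/p, t/q, t/r) - P = \v_\ast\), so that
\[
\bigl|(t/p, t/q, t/r) - P \bigr|_1 \le 3/10.
\]
Because \(m\) is odd, all three coordinates of \(P\) lie in \(\tfrac12 + \Z\), so a direct computation (or Lemma~\ref{computingM} applied coordinate-wise) gives \(|P - \Lambda_{\mathrm{ev}}|_1 = 3/2\). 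The triangle inequality then closes the argument:
\[
\bigl|(t/p, t/q, t/r) - \Lambda_{\mathrm{ev}}\bigr|_1 \ge \tfrac{3}{2} - \tfrac{3}{10} = 1 + \tfrac{1}{5}.
\]

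\textbf{Main obstacle.} The truly delicate step is the first: converting the hypothesis about integer vectors with \(\w \cdot \v_0 = 0\) into a usable lower bound on \(\mu_1(\Phi^{\vee}, |\cdot|_\infty)\) with a constant precisely matching the Mahler factor \(n! = 6\). The trouble is that \(\Phi^{\vee}\) can a priori contain short vectors \(\w\) arising not from integer relations but from near-half-integer coincidences (e.g.\ \(\w = (2p,0,0)\) when \(p\) is small), which the stated hypothesis does not directly exclude. Resolving this cleanly probably requires either a mild strengthening of the hypothesis (say, excluding \(\w \cdot \v_0 \in \tfrac12\Z \setminus \{0\}\) for small \(\w\)) or a direct closest-vector argument in \(\Lambda\) bypassing Mahler duality; the remaining parity bookkeeping and triangle-inequality estimates are routine once this first step is secured.
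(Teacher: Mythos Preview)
Your overall strategy---applying Mahler's transference inequality $\lambda_3 \mu_1 \le 6$ to the lattice $\Phi = \Z\v + \Z^3$ with $\v = (1/p,1/q,1/r) + \tfrac12(1,1,1)$, and then extracting a short vector with odd $\v$-coefficient to land near $(1/2,1/2,1/2)$---is exactly the paper's approach. Two points need correction.

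First, the obstacle you flag at the end is genuine, but it has a clean resolution you overlooked: the conclusion of Lemma~\ref{5lemmaagain} is invariant under scaling $(p,q,r) \mapsto (Np,Nq,Nr)$ (replace $t$ by $Nt$), and so is the hypothesis, since the relation $a/p + b/q + c/r = 0$ is homogeneous. Hence one may scale $(p,q,r)$ at the outset so that $|a|/p + |b|/q + |c|/r < \tfrac12$ whenever $\max(|a|,|b|,|c|) \le 20$. A nonzero $\w = (a,b,c) \in \Phi^{\vee}$ with $|\w|_{\infty} \le 20$ satisfies $a/p + b/q + c/r \equiv (a+b+c)/2 \pmod 1$; after scaling the left side lies in $(-\tfrac12,\tfrac12)$, so both sides must vanish, producing the forbidden exact relation. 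This is precisely how the paper obtains $\mu_1 > 20$; no strengthening of the hypothesis and no side computation is needed.

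Second, your parity argument is broken. The chain $2/n \le \prod |\v_i|_1 \le 27/1000$ yields $n \ge 75$, not $n \le 74$, and in any case large $n$ is exactly the regime of interest, so no contradiction arises. Covolume comparison cannot rule out ``all $m_i$ even''. The correct argument (left implicit in the paper) is that in dimension~$3$ one may take the successive-minima vectors to be a \emph{basis} of $\Phi$; since the homomorphism $\Phi \to \Z/2\Z$ sending $m\v + \mu \mapsto m \bmod 2$ is surjective (its value on $\v$ is $1$, and $n$ is even), the three basis vectors cannot all lie in its kernel.
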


This is somewhat similar (if weaker) to what we proved using Fourier analysis
in \S~\ref{sec:fourier}, but the proof instead
uses the geometry of numbers.

\begin{proof}
Since Theorem~\ref{5lemma} is invariant up to scaling,
we may assume that the \((p,q,r)\) are not all exactly divisible by \(2\).
Note that if \(k\)  is an \emph{odd} integer and \(| k \v - \Z^3|\)
is small, then 
\[ \left(\frac{k}{p},\frac{k}{q}, \frac{k}{r} \right)\]
will be close to \((1/2,1/2,1/2) \bmod 1\). The problem is that small
vectors may not involve odd multiples of \(\v)\). In particular,
the smallest vector will typically be
 \[2 \v - (1,1,1) = (2/p,2/q,2/r).\]
On the other hand, the vectors \(\v_1,\v_2,\v_3\) cannot \emph{all}
have an even coefficient of \(\v_1\), and hence what we want to show
that that \(\v_3\) is not too large.

Returning to the statement,
The claim we want to establish  is that we can find a small linear relation. This would
mean that we can find an element in the dual lattice of very small length.
Hence 
we first assume this is impossible, and that the smallest vector \(\w_1\) has length
 \(\mu_1 \ge 20\). Note that the length is an integer since \(\Phi^{\vee} \subset \Z^3\).
 The first key step is to use Mahler's duality theorem~\cite[\S VIII.5, Thm VI]{CasselsBook},
 which says that that
$$1 \le \lambda_i \mu_{4-i} \le 3! = 6.$$
We deduce from this that
\[\lambda_i \le \lambda_3 \le \frac{6}{20}.\]
The coefficient of \(\v\) in \(\v_1,\v_2,\v_3\) cannot all be divisible by \(2\).
Thus there exists a vector of length bounded by \(6/20\) of the form
\(k \v + \Z^3\) with \(k\) odd. It follows that
\[ \left| \left(\frac{k}{p},\frac{k}{q},\frac{k}{r}\right)
+ \left(\frac{k}{2},\frac{k}{2},\frac{k}{2} \right) - \Z^3\right|
\le \frac{6}{20},\]
and thus, since \(k\) is odd,
\[ \left| \left(\frac{k}{p},\frac{k}{q},\frac{k}{r}\right)- \Z^3\right| \ge \frac{3}{2} - \frac{6}{20} = 
1 + \frac{1}{5}.\]

Thus we are done unless
\(\mu_1 < 20\), and thus there
exists 
a triple \((a,b,c)\) with
\begin{equation}
\label{nonzero}
\frac{a}{p} + \frac{b}{q} + \frac{c}{r} \equiv \frac{a+b+c}{2} \bmod 1.
\end{equation}
and 
\[ |a|,|b|,|c| < 20.\]
We are almost done, except the sum
on the RHS of \eqref{nonzero} might be non-zero.
Note, however,  \(k\) constructed above is an integer, but we need only
produce a rational number \(t = k/d\).
In particular, since we can always scale \((p,q,r)\)
by a large scalar, we can ensure that
\[ \frac{|a|}{p} + \frac{|b|}{q} + \frac{|c|}{r} < \frac{1}{2}.\]
(Remember that the coefficients in the  numerator are at most \(20\).)
Thus we are reduced to considering \((p,q,r)\) which lie
on the hyperplanes
\[ \frac{a}{p} + \frac{b}{q} + \frac{c}{r} = 0, \]
for integers \(a,b,c\) with
\[ 
 \max(|a|,|b|,|c|) \le 20.\]
 \end{proof}

From this point onwards, we are reduced to considering the exceptional
hyperplanes. This is exactly what we had arrived at in \S~\ref{sec:fourier},
although only after more work. On the other hand, that work also
resulted in fewer exceptional hyperplanes to consider. While one can certainly
envisage a proof of Lemma~\ref{5lemmaagain} following on in a similar way,
we are content with our original proof of (the same result) Theorem~\ref{5lemma}.

\section{The Jacobsthal Function}
\label{sec:jacob}

\begin{df} The primordial \(P_r = \prod_{k=1}^{r} p_k \) is the product of the first \(r\) prime numbers.
\end{df}

\begin{df} \label{defjac} Let~\(n\) be  a positive integer. Let~\((n,d) = 1\). The Jacobsthal function~\(J(n)\) is the smallest integer such that
any arithmetic progression:
\[a, a + d, \ldots, a + (J(n) - 1)d\]
of length~\(J(n)\) 
contains an element~\(a+kd\) which is coprime to~\(n\).
\end{df}

We have:
\begin{theorem}[Kanold] \label{kan} Suppose that~\(n\) has at most~\(m\) distinct prime divisors. Then~\(J(n) \le 2^m\).
\end{theorem}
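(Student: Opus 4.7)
The plan is to proceed by induction on $m$, the number of distinct prime divisors of $n$.

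For the base case $m = 0$, we have $n = 1$, and since every integer is coprime to $1$, clearly $J(1) = 1 = 2^0$.

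For the inductive step, fix $n$ with $\omega(n) = m$ and consider an arithmetic progression $a, a+d, \ldots, a+(J-1)d$ with $(d, n) = 1$, where $J = 2^m$. The first reduction is to assume $d = 1$: since $d$ is a unit modulo $n$, the substitution $k \mapsto d^{-1}(a + kd - a) + a d^{-1}$ produces from the original AP a run of consecutive integers with the same coprimality pattern modulo $n$. Thus it suffices to show that any $2^m$ consecutive integers contain at least one coprime to $n$.

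The key observation, which handles the cleanest case, is that one can trade a prime factor of $n$ against a factor of two in the length. Suppose $n$ is even and write $n = 2^{v_2(n)} n'$ with $n'$ odd, so $\omega(n') = m - 1$. Among any $2^m$ consecutive integers, exactly $2^{m-1}$ are odd, and these form an arithmetic progression of length $2^{m-1}$ with common difference $2$, which is coprime to $n'$. By the inductive hypothesis applied to $n'$, this sub-progression contains an element coprime to $n'$; being odd, this element is automatically coprime to $2^{v_2(n)}$ as well, hence coprime to $n$. If $n$ is odd, then $2n$ has $\omega(2n) = m+1$, and by an analogous reduction one shows $J(n) \le J(2n)$ (after replacing $d$ by a suitable $d'$ with $d' \equiv d \pmod n$ and $d'$ odd, using Dirichlet), so the odd case reduces to the even case at the cost of increasing $m$ by one; since this would give $J(n) \le 2^{m+1}$ rather than $2^m$, one must refine the bookkeeping.

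The main obstacle is therefore the refined inductive step for odd $n$. To handle it, I would strengthen the inductive hypothesis into the quantitative statement that any AP of length $L$ with $(d, n) = 1$ contains at least $\lfloor L / 2^{\omega(n)} \rfloor$ elements coprime to $n$ (which follows immediately by partitioning into blocks of length $2^{\omega(n)}$). Applied to $n' = n/p^{v_p(n)}$ for a chosen prime $p \mid n$, the interval of $2^m$ consecutive integers contains at least two elements coprime to $n'$; since the number of multiples of $p$ in this interval is at most $\lceil 2^m/p \rceil$, choosing $p$ to be the largest prime divisor of $n$ (so $p > 2^{m-1}$, which holds whenever it holds) forces at most one of the two coprime-to-$n'$ elements to be divisible by $p$, leaving one coprime to $n$. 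In the residual case where every prime divisor of $n$ is at most $2^{m-1}$, I would invoke Kanold's original combinatorial argument, which exploits a finer analysis of the positions of multiples of each prime within the interval; see~\cite{Kanold}.
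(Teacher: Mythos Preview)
The paper does not supply a proof of this theorem; it is stated as a result of Kanold (with the citation~\cite{Kanold}) and then immediately used. So there is no ``paper's proof'' to compare against.

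Your attempted proof has genuine gaps. The reduction to $d=1$ and the inductive step when $2\mid n$ (peeling off the prime $2$ and passing to the odd integers in the interval) are both fine. The trouble is entirely in the odd-$n$ case, where you try to peel off the \emph{largest} prime $p$ instead. Two things go wrong. First, the claim ``$p>2^{m-1}$'' is not generally true: already for $m=5$ and $n=3\cdot 5\cdot 7\cdot 11\cdot 13$ the largest prime divisor is $13<16=2^{4}$, so the hypothesis you need fails from $m=5$ onward. Second, even when $p>2^{m-1}$ does hold, it only gives $2^m/p<2$, hence at most \emph{two} multiples of $p$ in the interval, not at most one; nothing prevents both of the $\ge 2$ elements coprime to $n'$ (furnished by your strengthened inductive hypothesis) from being exactly those two multiples of $p$. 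So the counting does not close.

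You acknowledge this yourself: the parenthetical ``which holds whenever it holds'' and the final sentence deferring the residual case to ``Kanold's original combinatorial argument'' amount to citing the very theorem you are trying to prove. As written, the argument establishes $J(n)\le 2^{m}$ only for even $n$ (equivalently, $J(n)\le 2^{m+1}$ in general), which you already noted is too weak. A complete proof requires a genuinely different idea for the inductive step --- Kanold's actual argument is more delicate than a single prime-peeling step, and cannot be recovered from the sketch you give.
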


From this we get:

\begin{lemma} \label{sqbound} If~\(n > 4\), we have an inequality
\[J(n) \le 2 \sqrt{n}.\]
If \(n\) has at least \(16\) prime factors, then
\[J(n)^3 \le \frac{n}{24300}.\]
\end{lemma}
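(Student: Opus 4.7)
Both inequalities will follow from Kanold's bound (Theorem~\ref{kan}), namely \(J(n) \le 2^m\) where \(m\) is the number of distinct prime divisors of \(n\), combined with the elementary fact that if \(n\) has \(m\) distinct prime factors then \(n \ge P_m\). This reduces each assertion to a purely arithmetic inequality between \(P_m\) and a power of \(2\), which I would verify in a base case and then propagate by induction on \(m\), using that \(P_{m+1}/P_m = p_{m+1}\) grows without bound.

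For the first inequality, it suffices by Kanold to show \(4^{m-1} \le n\). When \(m = 1\) the right side is \(1\), which is certainly at most \(n\); the hypothesis \(n > 4\) is only needed to exclude the boundary equality case \(n = 4\), where \(2^m = 2\sqrt{n}\). For \(m \ge 2\) I would use \(n \ge P_m\) and take \(4 \le 6 = P_2\) as the base case; the inductive step follows because \(p_{m+1} \ge 5 > 4\) for all \(m \ge 2\), so \(4^{m-1} \le P_m\) upgrades to \(4^{m} \le P_{m+1}\).

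For the second inequality it suffices to establish
\[
24300 \cdot 2^{3m} \le P_m \quad \text{for all } m \ge 16.
\]
The inductive step here is essentially free: since \(p_{m+1} \ge p_{17} = 59 > 8 = 2^3\) for \(m \ge 16\), the inequality at level \(m\) propagates to level \(m+1\). The only step that requires actual arithmetic is the base case \(m = 16\), where I would compute
\[
P_{16} = 2 \cdot 3 \cdot 5 \cdots 53 \approx 3.26 \times 10^{19}
\quad \text{and} \quad
24300 \cdot 2^{48} \approx 6.84 \times 10^{18},
\]
confirming the bound with a factor of nearly \(5\) to spare.

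No step presents a serious obstacle: the whole proof rests on the observation that the primordial \(P_m\) eventually dominates \(c^m\) for any fixed constant \(c\), and one merely needs to locate where this takeover happens. If any part requires care it is the numerical base case \(m = 16\) for the second inequality, which is nonetheless an entirely routine calculation.
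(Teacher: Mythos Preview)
Your proof is correct and follows essentially the same approach as the paper: both parts use Kanold's bound \(J(n) \le 2^m\) together with \(n \ge P_m\), reducing everything to an elementary inequality between powers of \(2\) and primorials, checked at a base case and propagated by induction using \(p_{m+1} > 4\) (first part) and \(p_{17} = 59 > 8\) (second part). One cosmetic slip: your remark that \(n=4\) is the equality case \(2^m = 2\sqrt{n}\) is off (for \(n=4\) one has \(m=1\), so \(2^m = 2 < 4 = 2\sqrt{n}\)); the hypothesis \(n>4\) is in fact not needed for the non-strict inequality, and this does not affect your argument.
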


\begin{proof}
 If~\(n\) has one prime divisor, then~\(J(n) = 2< 2 \sqrt{n}\) for~\(n > 1\).
If~\(n\) has two prime divisors, then~\(J(n) \le 4 < 2 \sqrt{n}\) for~\(n > 4\). 
So we may assume that~\(J(n)\) has at least three prime divisors.
Suppose it has~\(m \ge 3\) prime divisors. Then
\[n \ge 2 \cdot 3 \cdot 5 \cdot \ldots p_m,\]
where~\(p_m\) is the~\(m\)th prime. But then
\[n \ge 2 \cdot 3 \cdot 5 \cdot 5 \ldots 5 = 6 \cdot 5^{m-2}.\]
On the other hand, by the previous theorem, \(J(n) \le 2^{m}\). 
Now it suffices to check that
\[J(n) \le 2^{m} \le \sqrt{4^{m}} \le 2 \sqrt{ 6 \cdot 5^{m-2}} \le \sqrt{n},\]
where the third inequality holds for all~\(m\).

Now for for \(n\) with \(m \ge r\) prime factors, and with \(p_{r+1} \ge 2^{k}\),
we have 
\[n \ge P_m \ge P_r \cdot 2^{k(m-r)},\]
 and so
\[ J(n)^k \le 2^{mk}  = 
\frac{2^{rk} P_r 2^{k(m-r)}}{P_r}
\le  \frac{2^{rk} n}{P_r}
= n \cdot \frac{2^{kr}}{P_r}.\]
Now certainly \(p_{17}  = 59 \ge 8 = 2^3\), and so it suffices to note that
\[ \frac{P_{16}}{2^{48}} = 115779.94\ldots > 24300.\]
\end{proof}

We can (and will) improve this inequality, at least for numbers of moderate size.
More importantly, we have~\cite{MR2945166} the following bounds on \(J(n)\) when \(n\) has moderately few factors:

\begin{lemma}[\cite{MR2945166}]  \label{better} Suppose that \(n\) has \(r\) distinct prime factors when \(r \le 24\). Then \(J(n)\)
is bounded above by~$U(r)$ given by the following table:
\begin{center}
\begin{tabular}{*4c}
 \multicolumn{4}{c}{\emph{Upper bounds for \(J(n)\)}}\\
 \toprule
 $r$ & $U(r)$ &  $r$ & $U(r)$  \\
\midrule
 $1$ & $ 2$ & $ 13$ & $ 74$ \\
 $2$ & $ 4$ & $ 14$ & $ 90$ \\
 $3$ & $ 6$ & $ 15$ & $ 100$ \\
 $4$ & $ 10$ & $ 16$ & $ 106$ \\
 $5$ & $ 14$ & $ 17$ & $ 118$ \\
 $6$ & $ 22$ & $ 18$ & $ 132$ \\
 $7$ & $ 26$ & $ 19$ & $ 152$ \\
 $8$ & $ 34$ & $ 20$ & $ 174$ \\
 $9$ & $ 40$ & $ 21$ & $ 190$ \\
 $10$ & $ 46$ & $ 22$ & $ 200$ \\
 $11$ & $ 58$ & $ 23$ & $ 216$ \\
 $12$ & $ 66$ & $ 24$ & $ 236$ \\
\bottomrule
\end{tabular}
\end{center}
\end{lemma}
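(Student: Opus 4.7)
The lemma is an explicit tabulation of upper bounds on the Jacobsthal function $J(n)$ in terms of the number $r = \omega(n)$ of distinct prime factors of $n$, and the approach I would take is computational, broadly following the strategy of the cited reference. Since $J(n) = J(\mathrm{rad}(n))$, one may assume $n = p_1 p_2 \cdots p_r$ is squarefree. A run of $\ell$ consecutive integers each sharing a prime factor with $n$ corresponds, via the Chinese remainder theorem, to a choice of residues $a_i \in \Z/p_i\Z$ such that every element of $\{0, 1, \ldots, \ell-1\}$ is congruent to $a_i$ modulo $p_i$ for at least one index $i$. Hence $J(n) - 1$ equals the largest such $\ell$, which recasts the problem as a finite combinatorial covering question.

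The next step in my plan is a monotonicity/replacement principle: replacing a prime divisor $p_i$ of $n$ by a smaller prime not already dividing $n$ should not decrease the maximum length of a coverable interval, since smaller primes generate denser residue classes. If established, this reduces the task of bounding $J(n)$ uniformly over squarefree $n$ with $\omega(n) = r$ to bounding $J(P_r)$, where $P_r$ is the $r$-th primorial. For each $r \le 24$, one then computes $J(P_r)$, or an explicit upper bound $U(r)$, by a systematic backtracking search: fix $a_1 = 0$ by translation, order the primes from smallest to largest, and at each stage maintain the set of positions in a target window $[0, L)$ not yet covered by the partial assignment $(a_1, \ldots, a_j)$. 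A branch is pruned whenever the covering capacity of the remaining primes, bounded above by $\sum_{i > j} \lceil L / p_i \rceil$, is insufficient to cover the uncovered positions.

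The main obstacle is the combinatorial explosion: for $r = 24$ the naive search space is astronomical, so aggressive pruning, careful ordering of the primes, and precomputation of sieve data modulo a smaller primorial $P_k$ for $k$ close to $r$ are essential to render the computation feasible in practice. A secondary subtlety is that the reduction to primorials, while very plausible, is not entirely trivial to prove rigorously in the strong form one needs; a fully certified proof typically sidesteps this by bounding $J(n) \le U(r)$ directly via a search organized over all relevant squarefree $n$ with $r$ prime factors, which is the route taken in the cited reference and constitutes a substantial but completed computational effort. For our purposes it suffices to import the table as given.
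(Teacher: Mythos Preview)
Your final sentence matches the paper exactly: the lemma is stated with a citation and no proof, and the paper simply imports the table from the reference.

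However, your sketched strategy contains a genuine error worth flagging. The ``monotonicity/replacement principle'' you propose --- that replacing a prime divisor of $n$ by a smaller prime not already present cannot decrease $J(n)$, thereby reducing to the primorial $P_r$ --- is false. Indeed, the paper remarks immediately after the lemma that ``the main result of~\cite{MR2945166} is that it is possible that if $n$ has $r$ prime factors that $J(n) > J(P_r)$.'' So the very reference being cited establishes that $J(P_r)$ is \emph{not} the maximum of $J(n)$ over squarefree $n$ with $r$ prime factors, and the reduction you call ``very plausible'' actually fails. You hedge appropriately (``If established'', ``not entirely trivial''), and you correctly note that the cited work proceeds by a direct search rather than via this reduction, so your overall conclusion is sound; but the heuristic you lead with is not merely unproven but wrong, and you should not present it as the natural first step.
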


The main result of~\cite{MR2945166} is that it is possible that 
if \(n\) has \(r\) prime factors that \(J(n) > J(P_r)\). On the other hand,
is is trivial that if \(n\) has \(r\) prime factors then \(n \ge P_r\).
There are also bounds available for larger
ranges of \(r\),
including~\cite{MR2476571,MR3315513}, but these will 
ultimately not be needed (but whose existence were still
psychologically useful).

\subsection{A useful lemma}

We begin by formulating a general lemma.

\begin{lemma}\label{approximatem} Let~\(m_{\R}\) be a real number, and let~\(a\), \(d\), \(N\),  be positive integers.
If~\((d,N)=1\),
then there exists an integer~\(m\) such that:
\begin{enumerate}
\item \((a+ d m)\) is prime to~\(N\).
\item \(|m - m_{\R}| \le J(N)/2\).
\end{enumerate}
If~\(a=1\), the assumption that~\((d,N)=1\) is unnecessary. 
\end{lemma}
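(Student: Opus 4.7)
The statement essentially unpacks the definition of $J(N)$ (Definition~\ref{defjac}), so the proof plan is short and structural rather than computational.

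My plan for the main case (where $(d,N)=1$) is to apply the Jacobsthal function directly to an interval of integers centered at $m_{\R}$. Specifically, set $m_0 = \lceil m_{\R} - J(N)/2 \rceil$ and consider the $J(N)$ consecutive integers
\[m_0,\ m_0+1,\ \ldots,\ m_0 + J(N) - 1.\]
By the choice of $m_0$ we have $m_0 \ge m_{\R} - J(N)/2$ and $m_0 + J(N) - 1 \le m_{\R} - J(N)/2 + J(N) = m_{\R} + J(N)/2$, so each such $m$ satisfies $|m - m_{\R}| \le J(N)/2$. The values $a + d m_0,\ a + d(m_0+1),\ \ldots,\ a + d(m_0 + J(N) - 1)$ then form an arithmetic progression of length $J(N)$ with common difference $d$ coprime to $N$, and Definition~\ref{defjac} guarantees one term of this progression is coprime to $N$. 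The corresponding $m$ is the desired integer.

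For the case $a=1$ without the hypothesis $(d,N)=1$, the idea is to peel off from $N$ the part supported on primes dividing $d$. Factor $N = N_1 N_2$ where $N_2$ collects (with multiplicity) exactly the primes of $N$ that divide $d$, so that $(d, N_1) = 1$ and every prime of $N_2$ divides $d$. For any integer $m$, $1 + dm \equiv 1 \pmod p$ for every prime $p \mid N_2$, so $(1 + dm, N_2) = 1$ automatically. Hence $(1 + dm, N) = 1$ is equivalent to $(1 + dm, N_1) = 1$. Applying the first part of the lemma with modulus $N_1$ in place of $N$ produces an $m$ with $|m - m_{\R}| \le J(N_1)/2$, and since any integer coprime to $N$ is coprime to $N_1$, we have $J(N_1) \le J(N)$, completing the argument.

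The only mild subtlety is verifying that the interval of length $J(N)$ around $m_{\R}$ contains at least $J(N)$ integers, which is immediate since $J(N)$ is a positive integer; this is handled by the explicit choice of $m_0$ above. There is no real obstacle: the lemma is a direct reformulation of Jacobsthal's definition, and the only step that requires thought is the reduction in the $a = 1$ case, which relies on the elementary monotonicity $J(N_1) \le J(N)$ when $N_1 \mid N$.
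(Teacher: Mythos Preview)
Your proof is correct and follows essentially the same approach as the paper: center a window of $J(N)$ consecutive values of $m$ around $m_{\R}$, invoke Definition~\ref{defjac}, and for the $a=1$ case strip off the part of $N$ supported on primes dividing $d$ and use $J(N_1)\le J(N)$. Your use of $m_0=\lceil m_{\R}-J(N)/2\rceil$ is in fact slightly cleaner than the paper's version, which splits into cases according to the parity of $J(N)$ and the sign of $m_{\R}-m_{\Z}$ to place the window; your ceiling trick handles all cases uniformly.
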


\begin{proof} Let~\(m_{\Z}\) denote the nearest integer to~\(m_{\R}\), so~\(m_{\R} = m_{\Z} + \varepsilon\)
and~\(|\varepsilon| < 1/2\).
If~\((d,N)=1\), then
by definition, any arithmetic progression~\((a + d i)\) of length~\(J(N)\) contains an element coprime to~\(N\).
If~\(d\) has a common factor with~\(N\), then let~\(N'\) denote the largest factor of \(N\) which is prime to \(d\),
so \(N/N'\) is only divisible by primes dividing \(d\). 
Then \((d,N') = 1\) and any arithmetic progression~\((a+ d i)\) of length~\(J(N') \le J(N)\) contains an element
coprime to \(N\). But the prime factors of \(N/N'\) divide \(d\), so if \(a=1\) then they are coprime to \(1+d i\),
and so every element in this sequence is coprime to \(N/N'\) and hence once an element is co-prime
to \(N'\) it is co-prime to \(N\).
We choose the sequence as follows:
\begin{enumerate}
\item If~\(J(N) = 2k+1\), we choose the sequence
\[a + d (m_{\Z} + i), \quad i = -k, \ldots, k.\]
We deduce there is a suitable~\(m\) with~\(|m_{\Z} - m| \le k\), and so
\[|m_{\R} - m| \le k + |\varepsilon| \le k+\frac{1}{2} = \frac{J(N)}{2}.\]
\item If~\(J(N)=2k\), we choose the sequence as follows:
\[\begin{aligned}
\text{If \(\varepsilon > 0\),} \quad a + d (m_{\Z} + i),  \quad i  & = -(k-1), \ldots, k,  \\
\text{If \(\varepsilon \le 0\),} \quad a + d (m_{\Z} + i),  \quad i  & = -k, \ldots, (k-1).
\end{aligned}
\]
If~\(m\) comes from an~\(i\) with~\(|i| \ne k\),
then~\(|m - m_{\Z}| \le k-1\) and~\(|m - m_{\R}| \le k - 1/2 = (J(N)-1)/2\).
If~\(|i| = k\), then, by construction, \(m > m_{\Z}\) if and only if~\(m_{\R} > m_{\Z}\),
and so
\[|m - m_{\R}| \le |m - m_{\Z}| = k = \frac{J(N)}{2}.\]
\end{enumerate}
\end{proof}

\section{Lower dimensional versions and Galois twists}
\label{sec:lower}

Our constructions are somewhat inductive, so it will be useful
to understand versions of this problem in dimensions one and
two.  Along the way we also introduce a new technique which
we call twisting. Some of the arguments in this section are also
ultimately going to be used in cumulative way, where a preliminary version of 
one lemma is used as input in a second argument which is then used
to strengthen the original lemma.

\subsection{A one dimensional version}
Let's consider the one dimensional version of this problem
with \(\Lambda = 2 \Z \subset \R\).
If we are given a \(n\), how accurately can we choose a  \(k\) with
\((k,2n)=1\) and with \(k/n \bmod 2\) in some given range?
Equidistribution implies that we can make it more and more accurate the
larger \(n\) is. Bounds on the Jacobsthal function allow us to prove
this for some explicit lower bound in \(n\),
and then checking directly for smaller \(n\) we can prove a result for all \(n\).

\begin{lemma}  \label{onedim}  
Let \(n > 1\).
There exists an integer \((k,2n) = 1\) so that if \(x = k/n \bmod 2\),
then \(x \in [1/6,1/2]\). 
If we  assume that \(n \ge 15\) and \(n \ne 18,21,33\), then
we can additionally find \(k\) so that
\(x \in [4/10,1/2]\). 
\end{lemma}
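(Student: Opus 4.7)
The plan is to translate the divisibility condition into an interval-covering problem and then apply the Jacobsthal function bounds from Section~\ref{sec:jacob}. Taking $x = k/n \bmod 2 \in [0,2)$, the requirement $x \in [1/6, 1/2]$ is equivalent to $k \bmod 2n$ lying in the interval $[n/6, n/2]$ of length $n/3$; similarly, $x \in [4/10, 1/2]$ amounts to $k \bmod 2n \in [2n/5, n/2]$ of length $n/10$. Combined with the coprimality $(k, 2n) = 1$, both claims reduce to exhibiting an integer coprime to $2n$ in an explicit interval. By the definition of the Jacobsthal function, any $J(2n)$ consecutive integers contain one coprime to $2n$, so it suffices to establish $J(2n) \le n/3$ for the first claim and $J(2n) \le n/10$ for the second.

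For the first (looser) claim, Lemma~\ref{sqbound} gives $J(2n) \le 2\sqrt{2n}$, which forces $J(2n) \le n/3$ for all $n$ above a modest explicit threshold. For the second claim the $\sqrt{}$-bound alone is too weak in the intermediate range, so I would combine it with Lemma~\ref{better}: since $2 \cdot 3 \cdot 5 \cdot 7 \cdot 11 = 2310$, the modulus $2n$ has at most four distinct prime factors as long as $2n < 2310$, and the tabulated value $U(4) = 10$ then suffices to establish the inequality well below the $\sqrt{}$-threshold. Together these reduce both statements to checking a finite and explicit list of small $n$.

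For these remaining small $n$, I would enumerate the integers in the target interval $[\alpha n, n/2]$ and test coprimality to $2n$. The looser claim turns out to hold for every $n > 1$. The tighter claim breaks down precisely at $n = 18, 21, 33$: for these values the short interval $[2n/5, n/2]$ has so few integers in it that each is divisible by a prime factor of $2n$ (the candidate sets are $\{8\}$, $\{9, 10\}$, and $\{14, 15, 16\}$ respectively, with $2n = 36, 42, 66$). For every other $n$ in the finite verification range, at least one integer in the interval is coprime to $2n$.

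The main obstacle is essentially bookkeeping --- carrying out the finite enumeration cleanly and confirming that the exceptional list is exactly $\{18, 21, 33\}$, with no further exceptions hiding in the intermediate range. There is no deeper difficulty once the Jacobsthal reduction is in place. A cleaner alternative packaging would invoke Lemma~\ref{approximatem} directly, with $a = d = 1$, $N = 2n$, and $m_{\R}$ the midpoint of the target interval; this produces the same bounds but folds the Jacobsthal argument into a single step.
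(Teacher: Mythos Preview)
Your proposal is correct and follows essentially the same approach as the paper: reduce to a Jacobsthal bound via Lemma~\ref{sqbound} (and Lemma~\ref{better}) to handle all large~$n$, then finish with a finite enumeration. The only packaging difference is that the paper writes $k=1+2m$ so that oddness is automatic and works with $J(n')$ (the odd part of~$n$) against the thresholds $n/6$ and $n/20$, centering at the midpoints $1/3$ and $9/20$ via Lemma~\ref{approximatem}; you instead search directly for an integer in $[n/6,n/2]$ (resp.\ $[2n/5,n/2]$) coprime to~$2n$ and use $J(2n)$ against $n/3$ (resp.\ $n/10$). Both routes land on the same finite check, and you even flag the Lemma~\ref{approximatem} repackaging yourself. (One tiny slip: for $n=18$ the candidate interval $[7.2,9]$ also contains~$9$, not just~$8$; but $\gcd(9,36)=9$, so the conclusion is unaffected.)
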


\begin{remark} With our notation (see Definition~\ref{notation}), we can also write the
first condition that \(x =k/n \bmod 2\) lies in \([1/6,1/2]\) as
\[ \left| \frac{k}{n} - \frac{1}{3} - 2 \Z \right| \le \frac{1}{6}.\]
\end{remark}

\begin{proof} Write \(k=1+2m\),   and  choose a real number \(m_{\R}\) 
so that \(k/n \bmod 2\)  lands in the middle
of this interval, which is \(x = 1/3\).  Clearly we can take
\[m_{\R} = \frac{n}{6} - \frac{1}{2}.\]
Now by Lemma~\ref{approximatem} we can find an integer \(m\) so
that \(k = 1+2m\) is prime to \(n\) and \(|m - m_{\R}| \le J(N)/2\),
where \(N  = n'\) is the largest odd factor of \(n\). (Certainly if \(m\) is
an integer then \(1+2m\) is automatically odd and prime to \(2\).)
We then find that 
\[ \frac{k}{n} = \frac{1+2m}{n} = \frac{1+2 m_{\R}}{n}
+ \frac{2 (m - m_{\R})}{n} = \frac{1}{3} + \frac{2 (m - m_{\R})}{n} \bmod 2.\]
We are done as long as
\[ \frac{ 2 |m - m_{\R}|}{n} \le \frac{1}{6},\]
but by construction we have
\[ \frac{ 2 |m - m_{\R}|}{n} \le \frac{J(n')}{n}.\]
Thus we are done as long as
\begin{equation}
\label{onedimeq}
J(n') \le \frac{n}{6}.
\end{equation}
Since \(J(n') \le J(n)\) and~\(J(n) \le 2 \sqrt{n}\) by Lemma~\ref{sqbound}, this is true for \(n \ge 144\). For smaller \(n\),
we find by computation that~(\ref{onedimeq}) still holds
unless 
\[n = 2, 3, 4, 5, 6, 7, 9, 10, 11, 15.\]
But for these values we can choose \(k\) so that 
\(k/n \bmod 2\) is as follows:
\[ \frac{1}{2}, \frac{1}{3}, \frac{1}{4},
\frac{1}{5}, \frac{1}{6}, \frac{2}{7},
\frac{2}{9}, \frac{3}{10}, \frac{3}{11}, \frac{4}{15} \in \left[ \frac{1}{6},\frac{1}{2} \right],\]
which completes the proof of the first claim.

Now suppose that \(n \ge 16\). The proof is very similar ---
we now take
\[m_{\R} = \frac{9n}{40} - \frac{1}{2},\]
and we
find an integer \(m\) so
that \(k = 1+2m\) is prime to \(n\) and \(|m - m_{\R}| \le J(N)\),
where \(N  = n'\) is the largest odd factor of \(n\). Then we have
\[ \frac{k}{n} = \frac{9}{20} + \frac{2 (m - m_{\R})}{n} \bmod 2.\]
We are done as above as long as
\begin{equation}
\label{anagain}
J(n') \le \frac{n}{20}.
\end{equation}
Since \(J(n') \le J(n)\) and~\(J(n) \le 2 \sqrt{n}\) by Lemma~\ref{sqbound}, this is true for \(n \ge 1600\). For smaller \(n \ge 15\) with \(n \ne 18,21,33\),
we find by computation that~(\ref{anagain}) still holds
unless 
\[n =
15, 16, 17, 19, 20,
22, 23, 24, 25, 26, 27, 28, 29, 30, 31,  34, 35, 36, 37, 38, 39, 42, 45, 51, 55, 57.\]
But for these values we can choose  the corresponding \(k\) as follows:
\[7, 7, 7, 9, 9, 9, 11, 11, 11, 11, 13, 13, 13, 13, 15, 15, 17, 17, 17, 17, 19, 19, 19, 25, 27, 25,\]
and then \(k/n\) always lies 
in the desired range, completing the proof.
\end{proof}

Naturally,
if required, one can always prove versions of this lemma allowing more
explicit exceptions.

\subsection{Twisting} \label{sec:twisting}
As mentioned in the introduction, one can approach
Conjecture~\ref{conjA} by thinking in terms of roots of unity
or in terms of lattice points. From the point of view of roots of
unity \(\zeta,\xi,\theta\), if the extension \([\Q(\zeta,\xi,\theta):\Q(\xi,\theta)]\)
is large, then \(\Gal(\Q(\zeta,\xi,\theta)/\Q(\xi,\theta))\) will move
\(\zeta\) around to within a small error of every point in the circle
while keeping the other roots of unity fixed. In this subsection
we describe the corresponding analogue for  the lattice point
version of the problem.

\begin{lemma} \label{sturdy} Let 
\(\displaystyle{m =  \frac{[p_1,p_2, \ldots, p_r,q]}{[p_1,p_2, \ldots, p_{r}]}}\).
Let \(m'\) be the largest odd  factor of \(m\).
Let \(k\) be  prime to 
\(\displaystyle{2 \prod_{i=1}^{r} p_i}\).
Then there exists another integer \(k'\) with the following properties:
 \begin{enumerate}
 \item \(k'\) is prime to \(\displaystyle{2 q \prod_{i=1}^{r} p_i}\).
 \item For \(i = 1, \ldots, r\), we  have
 \(\displaystyle{ \frac{k'}{p_i} \equiv \frac{k}{p_i} \bmod 2}\)
 \vspace{5pt} 
 \item We have
 \(\displaystyle{\left| \frac{k'}{q} - x  - 2 \Z \right| \le \frac{J(m')}{m}}\).
 \item \(k'\) is  prime to any auxiliary integer.
 \end{enumerate}
 \end{lemma}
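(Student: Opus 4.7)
The plan is to write $k' = k + 2Pt$ with $P = [p_1,\dots,p_r]$ and $t \in \mathbb{Z}$ to be chosen. Conditions (1)--(2) follow almost for free from the ansatz: $k' \equiv k \pmod{2P}$ makes $k'$ odd and coprime to each $p_i$, so $k'$ is coprime to $2\prod p_i$, and $(k'-k)/p_i = 2(P/p_i)t \in 2\mathbb{Z}$ gives (2). What remains is to make $k'$ coprime to $q$ (and the auxiliary integer) while controlling $k'/q \bmod 2$.

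Let $d = \gcd(P,q)$ and write $q = dm$, $P = da$, so $\gcd(a,m) = 1$ and $m = [P,q]/P$ in agreement with the lemma. The basic identity is
\[ \frac{2Pt}{q} \equiv \frac{2at}{m} \pmod{2}, \]
so as $t$ varies the quantity $2Pt/q \bmod 2$ runs through the $m$ equally spaced points $\{2s/m : s \in \mathbb{Z}/m\mathbb{Z}\}$ in $\mathbb{R}/2\mathbb{Z}$ via the bijection $s = at \bmod m$ (using $\gcd(a,m)=1$). Furthermore, if $t \equiv t' \pmod m$ then $2P(t-t')$ is a multiple of $2[P,q]$ hence of $2q$, so the residue $k' \bmod 2q$ depends only on $s$. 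Coprimality of $k'$ to $q$ therefore decomposes: for $\ell = 2$ or $\ell \mid P$ it is automatic (using $k$ coprime to $2P$), and for an odd prime $\ell \mid q$ with $\ell \nmid P$ it amounts to requiring $s$ to avoid a single residue class mod $\ell$ (the relevant inverses $(2P)^{-1}$ and $a^{-1}$ exist mod $\ell$ because $\ell \mid m$ and $\gcd(a,m)=1$). Every such $\ell$ divides $m'$. Any auxiliary integer (assumed coprime to $2P$) contributes analogous residue-avoidance conditions, which only enlarge the product of primes to avoid.

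Now set the target $y = (x - k/q) \bmod 2$, so the goal becomes making $2s/m$ close to $y$ in $\mathbb{R}/2\mathbb{Z}$. Choose an integer window $[s_0, s_0 + J(m')-1]$ centered on the real number $s_{\mathbb{R}} = my/2$, so that every $s$ in the window satisfies $|s - s_{\mathbb{R}}| \le J(m')/2$, and hence $|2s/m - y|_{\mathbb{R}/2\mathbb{Z}} \le J(m')/m$. By a CRT shift, the condition ``avoid one residue mod each odd prime $\ell \mid q$ with $\ell \nmid P$'' is equivalent to being coprime (after translation) to a divisor of $m'$; Lemma \ref{better} / the definition of $J$ then guarantees some $s$ in this window works, and incorporating an auxiliary integer only replaces $m'$ by a slightly larger product. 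Lifting to any $t$ with $at \equiv s \pmod m$ and setting $k' = k + 2Pt$ yields all four conditions.

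The main thing to be careful about is the bookkeeping: the bound involves the odd part $m'$ rather than $m$ itself, which is correct precisely because the primes $\ell \mid q$ whose coprimality is non-automatic are all \emph{odd} divisors of $m$ (the prime $2$ divides $2P$ and so is handled by $k$ being odd). This is the reason for the appearance of $m'$, not $m$, in the Jacobsthal estimate.
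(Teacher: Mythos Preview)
Your argument is essentially the paper's: set $k' = k + 2Pt$ with $P = [p_1,\dots,p_r]$, observe that (1)--(2) are automatic, reduce coprimality to $q$ to avoiding one residue modulo each odd prime of $m$ not dividing $P$, and use the Jacobsthal bound to land $k'/q$ near $x$. The paper parametrizes slightly differently, writing $t = ij$ and fixing $j$ with $aj \equiv 1 \pmod m$ so that incrementing $i$ moves $k'/q$ by exactly $2/m \bmod 2$; your substitution $s = at \bmod m$ accomplishes the same thing, and the paper's $i$ is your $s$.

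One point on condition (4): folding the auxiliary integer into the Jacobsthal window, as you do, inflates the bound in (3) from $J(m')/m$ to $J(m'\cdot N)/m$ for some $N$, so (3) and (4) would not hold simultaneously as stated. The clean fix is to first find $k'$ satisfying (1)--(3) and then add a suitable multiple of $2[P,q]$: this preserves (1)--(3) exactly (since $2[P,q]/p_i$ and $2[P,q]/q$ lie in $2\Z$, and $k'$ modulo every prime of $2q\prod p_i$ is unchanged), while allowing coprimality to any further integer by CRT. The paper's own proof writes $2\prod p_i$ here, which in fact does not preserve (3) or even coprimality to $q$ in general; $2[P,q]$ is what is meant. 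In any case, (4) is never invoked in the applications.
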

 
 \begin{proof}
 Since adding \(\displaystyle{2 \prod_{i=1}^{r} p_i}\) to \(k'\)
 doesn't change any of the first three properties,
 we can easily ensure the fourth by the Chinese Remainder
 Theorem, so we concentrate on the first three conditions.
 
 We shall consider integers \(k'\) of the form
 \[k' = k + 2 [p_1, \ldots, p_m] i j \]
 for integers \(i,j\). Here \(i\) will vary and \(j\) is fixed, to be chosen later.
 Certainly \(k'\) is prime to \(2\) and the integers \(p_i\).
 Thus \(k'\)
 is prime to \(q\) if and only if it is prime to \(m'\).
 Finally, \(k'/p_i \equiv k/p_i \bmod 2\).
Now we need to make a judicious choice of the integer \(i\).
 There exists an~\(i_{\R}\) and~\(k_{\R}\)
 so that if
 \(k_{\R} = k + 2[p_1,\ldots, p_r]  i_{\R} j\),
  then
  \[k_{\R}/q \equiv x \bmod 2.\]
  By Lemma~\ref{approximatem},
 we can now find an integer~\(i'\) with~\(|i' - i_{\R}| \le J(m')/2\) so that~\(k'\)
 is prime to~\(m'\). Now let us make a careful  choice of~\(j\).
 We note that if we change~\(i\) to~\(i+1\) then~\(k'/q\) changes by
 \[2 \cdot \frac{[p_1,\ldots, p_r] j}{q} \bmod 2.\]
 By considering
the powers of any prime dividing either~\(p\), \(q\), or~\(r\), we find that in reduced terms this is equal to
 \[2 \cdot \frac{c j}{m} \bmod 2,\]
 where~\((c,m)=1\). 
We now choose a~\((j,m)=1\) so that~\(cj \equiv 1 \bmod m\),
which is possible because~\((c,m)=1\).
But then we have
  \[2 \cdot \frac{[p_1,\ldots,p_r] j}{q} \equiv  \frac{2}{m} \bmod 2.\]
Now by construction, we find that, modulo~\(2\), we have

 \[ \left| \frac{k'}{q} - x - 2 \Z \right|  = \left| \frac{k'}{q} - \frac{k_{\R}}{q} \right| 
 =\frac{2 |i' - i_{\R}|}{m} \le \frac{J(m')}{m}\]
   which gives the third condition.
  \end{proof}

\subsection{A two dimensional problem}

With the one dimensional version and with twisting in hand, we
now consider a two dimensional version of our problem.

\begin{lemma} \label{previous}
Let~\((p,q)\) be any pair of integers. Then there exists an integer~\((k,2pq)=1\) so that if~\(x = k/p \bmod 2\) and~\(y = k/q \bmod 2\),
then, up to reordering,
\[  \frac{1}{2} \ge x  \ge \frac{1}{6}, \quad  \frac{5}{6} \ge y \ge \frac{1}{6}.\]
\end{lemma}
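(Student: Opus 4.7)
Plan. The proof combines Lemma~\ref{onedim} with the twisting construction of Lemma~\ref{sturdy}. By the ``up to reordering'' clause we may freely exchange the roles of \(p\) and \(q\). Set \(d = \gcd(p, q)\) and write \(p = ad\), \(q = bd\) with \(\gcd(a, b) = 1\); after possibly swapping, arrange that \(b \ge a\).

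Apply Lemma~\ref{onedim} with \(n = p\) to obtain \(k_0\) coprime to \(2p\) with \(k_0/p \bmod 2 \in [1/6, 1/2]\). Then invoke Lemma~\ref{sturdy} with \(r = 1\), \(p_1 = p\), and target value \(x = 1/2\), producing \(k'\) coprime to \(2pq\) with \(k'/p \bmod 2 = k_0/p \bmod 2 \in [1/6, 1/2]\) and
\[ \left| \frac{k'}{q} - \frac{1}{2} - 2\mathbf{Z} \right| \le \frac{J(m')}{m}, \]
where \(m = [p,q]/p = b\) and \(m'\) is the largest odd factor of \(m\). The bound \(k'/q \bmod 2 \in [1/6, 5/6]\) then follows whenever \(J(m')/m \le 1/3\). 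Lemma~\ref{sqbound} handles all sufficiently large \(m\), and a direct check using Lemma~\ref{better} together with the elementary values \(J(1) = 1\) and \(J(p) = 2\) for any prime \(p\) shows that this bound holds for every \(m\) except \(m \in \{2, 3, 5\}\). (The case \(m = 1\) is \(p = q\), handled immediately by the single application of Lemma~\ref{onedim}.)

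For the exceptional values \(m \in \{2, 3, 5\}\), the condition \(\gcd(a, b) = 1\) with \(b = m\) and \(a \le b\) restricts \((a, b)\) to the finite list \(\{(1,2), (1,3), (2,3), (1,5), (2,5), (3,5), (4,5)\}\). The pair \((4, 5)\) is resolved by interchanging \(p\) and \(q\) in the twisting step — applying Lemma~\ref{sturdy} with \(p_1 = q\) — which replaces \(m\) by \(a = 4\), for which \(J(1)/4 \le 1/3\). For each of the other six pairs, \((p, q) = (ad, bd)\) is a one-parameter family indexed by \(d \ge 1\). Here one chooses a real target \(t = t(a,b)\) so that any integer \(k\) within a small (Jacobsthal-controlled) error of \(t d\) automatically places both \(k/p \bmod 2\) and \(k/q \bmod 2\) inside the required intervals, and then applies Lemma~\ref{approximatem} with modulus \(2 \lcm(p, q) = 2 a b d\) to locate such a \(k\) coprime to \(2pq\). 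As soon as a Jacobsthal bound on the modulus gives the required accuracy — true for all but finitely many small \(d\) — the construction succeeds, and the residual finite list of small \(d\) is verified by direct computation.

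The main obstacle lies in these small \((a, b)\) families, where twisting is unavailable and both coordinates of \((k/p \bmod 2, k/q \bmod 2)\) must be controlled simultaneously by a single choice of \(k\). This forces us to select \(t(a, b)\) carefully so that both fractions avoid their bad neighborhoods of \(0 \bmod 2\) at once. The existence of such targets is a finite check — one per pair — and the Jacobsthal tail bound together with a short computation for small \(d\) then completes the proof.
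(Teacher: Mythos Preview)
Your plan is correct and follows essentially the same route as the paper: apply Lemma~\ref{onedim} to one coordinate, twist the other via Lemma~\ref{sturdy}, reduce to the exceptional values $m=b\in\{1,2,3,5\}$, and for those finitely many $(a,b)$ pick a real target $t(a,b)$ and use Lemma~\ref{approximatem} plus a finite check for small $d$. The only cosmetic differences are that the paper treats $(4,5)$ uniformly with the other pairs (via an explicit segment $\Phi=\{1/4\}\times[1/4,3/4]$ and the bound $J(2abd)/(2ad)\le 1/12$, giving the cutoff $d\le 288b/a$), whereas you dispose of $(4,5)$ by re-swapping to get $m=4$; note that this re-swap must be done from the start (apply Lemma~\ref{onedim} to $q=5d$, not $p=4d$), not only ``in the twisting step.''
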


\begin{proof} Write \((p,q) = (ad,bd)\) with \((a,b)=1\), and \(b > a\).
Let \(m = [p,q]/p =  b\).
By Lemma~\ref{onedim}, we may find a \(k\) with \(k/p \in [1/6,1/2] \bmod 2\). 
We now want to modify \(k\) so that \(k/p \bmod 2\) remains unchanged but \(k/q  \bmod 2\) lands in the desired interval.
By Lemma~\ref{sturdy},  we can keep \(k/p \bmod 2\) fixed, and make
 \[\displaystyle{\left| \frac{k}{q} - \frac{1}{2}  - 2 \Z \right| \le \frac{J(m')}{m}}.\]
Thus we are done
as long as
\[ \frac{J(m')}{m} \le \min \left( \left| \frac{1}{2} - \frac{1}{6}\right|, \left|\frac{1}{2} - \frac{5}{6}\right|\right) = 
\frac{1}{3}.\]
With \(m = b\), this only occurs for \(b = 1,2,3,5\). So it remains
to consider these remaining cases.

We claim that for all of the finitely many remaining pairs \((a,b)\), there
exists a real number \(k_{\R}\) so that --- after possibly reordering \(a\) and \(b\) --- 
the point
\[ \left( \frac{k_{\R}}{ad}, \frac{k_{\R}}{bd}\right)\]
lies on the vertical 
line segment~\(\Phi\) from~\([1/4,1/4]\) to \([1/4,3/4]\).
To prove this, we simply compute each of the seven lines
for a suitable ordering of \((a,b)\).
More concretely,
the lines of slopes 
\[1/1,2/1,3/1,3/2,1/5,2/5,3/5\]
 intersect~\(\Phi\)
  at the points with \(y\) coordinates
   \[1/4,2/4,3/4,3/8,9/20,11/20\]
    respectively.
By Lemma~\ref{approximatem},
we may find an integer \(k\) with \((k,2abd)=1\) and such that \(|k- k_{\R}| \le  J(2abd)/2\).
We now claim that the point
 \[ \left( \frac{k}{ad}, \frac{k}{bd}\right)\]
 lies in the correct interval. If \(k = k_{\R}\), then one value
 would be \(1/4 \bmod 2\) and the other in \([1/4,3/4]\).
 After changing to \(k\), the terms will deviate by at most
\(J(2abd)/(2ad)\) for the first term and \(J(2abd)/(2 b d)\) for the second.
Note that \(a \le b\) so these are both at most \(J(2abd)/(2 a d)\).
In order to stay inside the region, which for
one term is in \([1/6,1/2]\) and the other is  in  \([1/6,5/6]\),
 the terms can both vary as much as the distance from \(1/4\)
 to the closest boundary point \(1/6\), or the distance
 from a point in \([1/4,3/4]\) to the nearest boundary point 
 in \([1/6,5/6]\), or in other words by
\[\min\left(\left|\frac{1}{4} - \frac{1}{6}\right|, \left|\frac{3}{4} - \frac{5}{6} \right| \right)= \frac{1}{12}.\]
Thus we are done as long as
 \[  \frac{J(2 a b d)}{2 a d}
 \le \frac{1}{12},\]
 but since \(J(2 a b d) \le 2 \sqrt{2 a b d}\), this holds if
 \[ 2 \sqrt{2 a b d} \le \frac{ a d}{6},\]
 or
 \[d \le \frac{288 b}{a} \le 288 \cdot 5 =1440.\]
 There are \(7\) pairs \((a,b)=1\) with \(b \in \{1,2,3,5\}\) so this leaves
 \(4 \times 1440 = 7200\) cases to check directly.
 \end{proof}

In Lemma~\ref{previous},  one can do no
 better than
 equality in the case of~\((p,q) = (6,6)\). But we can give an improvement
 if we impose further lower bounds on \(p\) and \(q\).
 
 \begin{lemma} \label{previous2}Let~\((p,q)\) be any pair of integers with
 \(p,q \ge 15\) and \(p,q \ne 18,21,33\).   Write \((p,q) = (ad,bd)\) with \((a,b)=1\).
Then there exists an integer~\((k,2pq)=1\) so that if~\(x = k/p \bmod 1\) and~\(y = k/q \bmod 1\),
then
\[  \frac{5}{7} \ge x  \ge \frac{2}{7}, \quad  \frac{5}{7} \ge y \ge \frac{2}{7},\]
unless \((p,q) = (15,30)\).
 \end{lemma}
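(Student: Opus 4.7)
This is a direct analog of Lemma~\ref{previous} with a tighter target interval, and the proof follows the same two-step template: first produce an integer $k_0$ with $k_0/p$ in the desired range via Lemma~\ref{onedim}, then upgrade to $k'$ with both $k'/p$ and $k'/q$ in range via the twisting technique of Lemma~\ref{sturdy}. Write $(p,q) = (ad, bd)$ with $(a,b) = 1$ and, by the symmetry $(p,q) \leftrightarrow (q,p)$, assume $a \le b$. Since $p \ge 15$ and $p \notin \{18, 21, 33\}$, the second (stronger) assertion of Lemma~\ref{onedim} produces $k_0$ with $(k_0, 2p) = 1$ and $k_0/p \bmod 2 \in [2/5, 1/2] \subset [2/7, 5/7]$.

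Applying Lemma~\ref{sturdy} with $r=1$, $p_1 = p$, and target $x = 1/2$ yields $k'$ with $(k', 2pq)=1$, $k'/p \equiv k_0/p \bmod 2$, and $|k'/q - 1/2 - 2\Z| \le J(m')/m$, where $m = [p,q]/p = b$ and $m'$ is the odd part of $b$. Because $[2/7, 5/7]$ is symmetric about $1/2$ with radius $3/14$, the desired conclusion follows whenever $J(m')/m \le 3/14$. Combining Lemma~\ref{better} with direct evaluation, this inequality fails only for $b \in \{1, 2, 3, 4, 5, 6, 7, 9, 15\}$; for $b=1$ we have $p = q$ and the result is immediate from the first step.

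For each of the remaining small coprime pairs $(a,b)$ (finitely many), I would argue geometrically: the image of $z \mapsto (z/p, z/q) \bmod 1$ is a closed line of slope $a/b$ on $\R^2/\Z^2$, and I would compute explicitly the union $I_{a,b} \subset \R$ of $z$-intervals on which this image lies in $(2/7, 5/7)^2$. Each connected component of $I_{a,b}$ has length proportional to $d$, while $J(2pq) \le 2\sqrt{2pq}$ by Lemma~\ref{sqbound}, so Lemma~\ref{approximatem} produces an integer $k \in I_{a,b}$ coprime to $2pq$ for all $d$ above an explicit threshold. The remaining small triples $(a, b, d)$ (subject to $p, q \ge 15$ and $p, q \notin \{18, 21, 33\}$) are then enumerated by computer. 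This search identifies the single exception $(a, b, d) = (1, 2, 15)$, i.e.\ $(p,q) = (15, 30)$, for which the relevant valid set $[60/7, 75/7] \cup [135/7, 150/7]$ contains only $\{9, 10, 20, 21\}$ --- all sharing a factor with $2pq = 900$. The main obstacle is the enlarged bad set of $b$-values relative to Lemma~\ref{previous}: with the narrower target one must enumerate noticeably more coprime pairs and verify the result over a somewhat larger range of small $d$ by direct computation to isolate $(15, 30)$ as the unique obstruction.
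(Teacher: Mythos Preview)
Your proposal is correct and follows essentially the same two-step template as the paper: apply the strong form of Lemma~\ref{onedim} to place $k/p$ in $[2/5,1/2]$, then twist via Lemma~\ref{sturdy} to control $k/q$, reducing to finitely many small values of $b$ which are handled by locating a real point on the line inside the target square and approximating it by an integer coprime to $2pq$ (with a finite computer check for small $d$). One small slip: your exceptional list includes $b=15$, but $J(15)=3$ so $J(m')/m=1/5<3/14$ and $b=15$ need not be treated separately; this only costs you harmless extra work. The paper's version of your geometric step is slightly more quantitative --- it records, for each exceptional $(a,b)$, the exact $\ell^\infty$ distance of the closest point on the line to $(1/2,1/2)$ (namely $1/6$, $1/10$, or $1/14$), which yields explicit thresholds for $d$ --- but your description captures the same content.
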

 
 \begin{proof}
 Write \((p,q) = (ad,bd)\) with \((a,b)=1\), and, without loss
 of generality, assume that \(b > a\).
Let \(m = [p,q]/p =  b\).
 By Lemma~\ref{onedim},
 we choose a \((k,2pq) = 1\) so that \(k/p \bmod 2\)
 lies in \([4/10,1/2]\).
 Arguing as in the proof of Lemma~\ref{previous},
 and using Lemma~\ref{sturdy},  we can keep \(k/p \bmod 2\) fixed, and make
 \[\displaystyle{\left| \frac{k}{q} - \frac{1}{2}  - 2 \Z \right| \le \frac{J(m')}{m}}.\]
Thus we are done
as long as
\[ \frac{J(m')}{m} \le \min \left( \left| \frac{1}{2} - \frac{2}{7}\right|, \left|\frac{1}{2} - \frac{5}{7}\right|\right) = 
\frac{3}{14}.\]
With \(m = b\), this only occurs for 
\begin{equation}
\label{blist}
b = 1, 2, 3, 4, 5, 6, 7, 9
\end{equation}
For \(a < b\) with \((a,b)=1\) and \(b\) in this list, but not of the exceptional
form above, we claim that there exists a \(k_{\R}\) such that
\[
\max \left(\left| \frac{k_{\R}}{ad} - \frac{1}{2} + \Z \right|,
\left| \frac{k_{\R}}{bd} - \frac{1}{2} + \Z \right| \right)
 \le 
\begin{cases} 
\displaystyle{\frac{1}{6}}, & (a,b) = (1,2), \\[10pt]
\displaystyle{\frac{1}{10}}, & (a,b) = (1,4), (2,3), \\[10pt]
\displaystyle{\frac{1}{14}}, & \text{otherwise}.
\end{cases} 
\]

We prove this but taking points \((t/a,t/b)\) with \(t \in \Z/1260\) and finding
the largest such point; for this method, the case \(1/14\) is optimal
for the choices \((a,b) = (1,6), (2,5), (3,4)\).
The next step is to now find an integer \(k\) with \(|k - k_{\R}| \le J(2abd)/2\)
and \((k,2abd) = 1\), and then, since \(1/ad \ge 1/bd\),
we need
\begin{equation}
\label{firsttime}
\frac{J(2abd)}{2ad} \le
\begin{cases}
\displaystyle{\frac{3}{14} - \frac{1}{6}  = \frac{1}{21}}, & (a,b) = (1,2), \\[10pt]
\displaystyle{\frac{3}{14} - \frac{1}{10} = \frac{4}{35}}, & (a,b) = (1,4), (2,3), \\[10pt]
\displaystyle{\frac{3}{14} - \frac{1}{14} = \frac{2}{7}}, & \text{otherwise}.
\end{cases}
\end{equation}
Using that \(J(2 a b d) \le 2 \sqrt{2 a b d}\),  we are 
done if
\[\frac{J(2abd)}{2ad} \le \frac{1}{C} \Rightarrow 
 a b d \le 2 b^2 C^2,\]
and hence we are done if
\begin{equation}
a b d \le
\begin{cases}
\displaystyle{882 = 2 \cdot 2^2 \cdot 21^2} & (a,b) = (1,2), \\[10pt]
\displaystyle{2450   = 2 \cdot 4^2 \cdot \left(\frac{35}{4}\right)^2} & (a,b) = (1,4), (2,3), \\[10pt]
\displaystyle{1985 > 2 \cdot 9^2 \cdot \left(\frac{7}{2}\right)^2} & \text{otherwise}.
\end{cases}
\end{equation}
and we can easily check these cases and find that the only
exception in this range is \((p,q) = (15,30)\).
 \end{proof}

 \subsection{Bounds for
 \texorpdfstring{\([p,q,r]/[p,q]\)}{[p,q,r]/[p,q]}}
 
 We can draw a few useful consequences out of the lemmas in
 this section. For example:

  \begin{lemma} \label{mbound}
 Let~\((p,q,r)\) be a triple, and let~\(m = [p,q,r]/[p,q]\).
 Then if~\(m\) is not in the following list:
 \begin{equation}
 \label{mlist}
 \begin{aligned}
 & 1, 2, 3, 4, 5, 6, 7, 9, 10, 11, 15 \end{aligned}
 \end{equation}
 then Theorem~\ref{maintheorem} is true for \((p,q,r)\).
 \end{lemma}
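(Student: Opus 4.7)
The plan is to reduce to the two-dimensional result Lemma~\ref{previous} and then use the twisting technique of Lemma~\ref{sturdy} to control the third coordinate. The hypothesis $m \notin \eqref{mlist}$ is precisely what allows the twist to leave enough room.

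First, apply Lemma~\ref{previous} to the pair $(p,q)$ to obtain an integer $k_0$ with $(k_0, 2pq) = 1$ such that, after possibly swapping $p$ and $q$, the residues $x_0 := k_0/p \bmod 2$ and $y_0 := k_0/q \bmod 2$ satisfy $x_0 \in [1/6, 1/2]$ and $y_0 \in [1/6, 5/6]$. Both lie in $[1/6, 5/6]$, so $|x_0 - \Z|, |y_0 - \Z| \ge 1/6$, and Lemma~\ref{computingM} yields
$$M(x_0, y_0) = 1 + \min(|x_0 - \Z|, |y_0 - \Z|) \ge 1 + \tfrac{1}{6}.$$
Fix a real $z^*$ with $|(x_0, y_0, z^*) - \Lambda| = M(x_0, y_0)$.

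Next, apply Lemma~\ref{sturdy} with $(p_1, p_2) = (p, q)$ and with the lemma's role of $q$ played by our $r$, so that the relevant parameter is precisely $m = [p, q, r]/[p, q]$. Since $k_0$ is prime to $2pq$, the lemma (applied with target $x = z^*$) produces an integer $k$ with $(k, 2pqr) = 1$ satisfying $k/p \equiv k_0/p \pmod{2}$, $k/q \equiv k_0/q \pmod{2}$, and
$$\bigl| k/r - z^* - 2\Z \bigr| \le J(m')/m,$$
where $m'$ is the largest odd factor of $m$. The function $z \mapsto |(x_0, y_0, z) - \Lambda|$ is $1$-Lipschitz on $\R/2\Z$ in the $|\cdot|_1$-norm, because shifting the third coordinate by $\delta$ shifts any $\ell^1$ distance to a fixed lattice point by at most $\delta$. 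Consequently
$$|k\v - \Lambda| \ge M(x_0, y_0) - \bigl|k/r - z^* - 2\Z\bigr| \ge 1 + \tfrac{1}{6} - J(m')/m,$$
and Theorem~\ref{maintheorem} holds for $(p, q, r)$ whenever $J(m')/m \le 1/6$.

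It remains to determine for which $m$ the inequality $J(m')/m \le 1/6$ can fail. This is exactly the condition~(\ref{onedimeq}) treated in Lemma~\ref{onedim}: by Lemma~\ref{sqbound} it holds automatically for all $m \ge 144$, and a direct finite computation shows that the only $m > 1$ for which it fails are $m \in \{2, 3, 4, 5, 6, 7, 9, 10, 11, 15\}$. The case $m = 1$ is also excluded: here $r \mid [p, q]$ and the factor $2[p_1,\ldots,p_r]ij/r$ in Lemma~\ref{sturdy} is an even integer, so the twist cannot alter $k/r \bmod 2$ at all (equivalently, $J(1)/1 = 1 > 1/6$). Together these give precisely the list~\eqref{mlist}. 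The main obstacle is the Jacobsthal tabulation for small $m$, but this has already been carried out in Lemma~\ref{onedim}, so beyond invoking earlier results no new work is required.
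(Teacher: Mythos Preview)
Your proof is correct and follows essentially the same approach as the paper's own proof: apply Lemma~\ref{previous} to get good control of the first two coordinates, use Lemma~\ref{computingM} to find an optimal target $z^*$, then twist via Lemma~\ref{sturdy} to bring $k/r$ close to $z^*$, and conclude by the triangle inequality (your Lipschitz remark) that the condition $J(m')/m \le 1/6$ suffices. Your explicit pointer to equation~\eqref{onedimeq} in Lemma~\ref{onedim} for the finite check of small $m$ is a nice touch, since exactly the same tabulation was already done there.
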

 
 \begin{proof} We first find~\((k,2pq)\) with~\(k/p \equiv x \bmod 2\) and~\(k/q \equiv y \bmod 2\)
 as in Lemma~\ref{previous}, so with
\[  \frac{1}{2} \ge x  \ge \frac{1}{6}, \quad  \frac{5}{6} \ge y \ge \frac{1}{6}.\]

By Lemma~\ref{computingM}, 
there exists a \(z\) such that \(|(x,y,z) - \Lambda| \ge 1 + 1/6\).
By Lemma~\ref{sturdy}, we find a~\(k'\) with \((k',2pqr) = 1\) and
 \[\frac{J(m')}{m} \ge e = |k'/r - z - 2 \Z|.\]
 By the triangle inequality, we have
 \[\left|\left(\frac{k'}{p},\frac{k'}{q},\frac{k'}{r}\right) - \Lambda \right|
 \ge 1 + \frac{1}{6} -  \frac{J(m')}{m},\]
 This means we are done  as long as
 \[\frac{J(m')}{m} \le \frac{1}{6}\]
 Since
 \[ \frac{J(m')}{m} \le \frac{J(m)}{m} \le \frac{2 \sqrt{m}}{m} \le \frac{1}{6}\]
this is automatic as soon as~\(m \ge 12^2 = 144\),
and then by computation  for most smaller \(m\) as well,  and we find the only exceptions lie in~(\ref{mlist}).
    \end{proof}
    
    There is also the following very similar variant:
    
     \begin{lemma} \label{mbound4}
 Let~\((p,q,r)\) be a triple with \((p,q)=(ad,bd)\) and \((a,b)=1\), and let~\(m = [p,q,r]/[p,q]\).
Suppose that
 \(p,q \ge 15\) and \(p,q \ne 18,21,33\).
If~\(m \ne 1,2,3,5,6\), then Theorem~\ref{maintheorem} is true for \((p,q,r)\).
 \end{lemma}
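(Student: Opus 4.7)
The plan is to run the same three-step template as in Lemma~\ref{mbound}, except that the two-dimensional input is now the sharper Lemma~\ref{previous2} (which buys us an improvement from $[1/6,5/6]\times[1/6,1/2]$ to $[2/7,5/7]\times[2/7,5/7]$), so the corresponding bound on $J(m')/m$ we must verify weakens from $1/6$ to $2/7$, which is exactly what shrinks the exceptional list from $\{1,2,3,4,5,6,7,9,10,11,15\}$ to $\{1,2,3,5,6\}$.

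First, under the hypotheses $p,q\ge 15$ and $p,q\notin\{18,21,33\}$, Lemma~\ref{previous2} produces an integer $k$ coprime to $2pq$ with $x=k/p\bmod 1$ and $y=k/q\bmod 1$ both in $[2/7,5/7]$, provided $(p,q)\neq(15,30)$. Since $\min(|x-\Z|,|y-\Z|)\ge 2/7$, Lemma~\ref{computingM} furnishes a real number $z$ with $|(x,y,z)-\Lambda|\ge 1+2/7$.

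Next, with $m=[p,q,r]/[p,q]$ and $m'$ its largest odd factor, I apply Lemma~\ref{sturdy} (taking $(p_1,p_2)=(p,q)$ and using $q\mapsto r$ for the role of the new prime) to obtain $k'$ with $(k',2pqr)=1$, preserving $k'/p\equiv k/p$ and $k'/q\equiv k/q$ modulo $2$, and satisfying
\[
\left|\frac{k'}{r}-z-2\Z\right|\le \frac{J(m')}{m}.
\]
The triangle inequality then gives
\[
\left|\left(\frac{k'}{p},\frac{k'}{q},\frac{k'}{r}\right)-\Lambda\right|\ge 1+\frac{2}{7}-\frac{J(m')}{m},
\]
so Theorem~\ref{maintheorem} holds as soon as $J(m')/m\le 2/7$.

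The verification of $J(m')/m\le 2/7$ is then a direct computation. For $m\ge 49$ one has $J(m')/m\le J(m)/m\le 2/\sqrt m\le 2/7$ by Lemma~\ref{sqbound}, and for the remaining finitely many $m<49$ one checks the inequality by hand (using the elementary bounds $J(1)=1$, $J(\text{prime power})=2$, $J(15)=4$, etc.), finding that it fails precisely when $m\in\{1,2,3,5,6\}$, matching the statement.

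The only genuine obstacle is the exceptional pair $(p,q)=(15,30)$ excluded by Lemma~\ref{previous2}. For such $(p,q)$, the constraint $[15,30,r]/30=m$ together with $m\notin\{1,2,3,5,6\}$ confines $r$ to a small explicit list of divisors of $30m$ (and even combining with the earlier Lemma~\ref{mbound} leaves only the cases $m\in\{4,7,9,10,11,15\}$ to treat), each of which I verify directly in the spirit of the codimension-two case in~\S\ref{sec:codim2} by searching for a suitable $k$ explicitly. This finite check is the expected main obstacle, but it is entirely routine.
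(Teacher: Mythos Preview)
Your proof is correct and follows essentially the same approach as the paper: both invoke Lemma~\ref{previous2} to get $x,y\in[2/7,5/7]$, apply Lemma~\ref{computingM} and Lemma~\ref{sturdy} exactly as in Lemma~\ref{mbound}, reduce to the inequality $J(m')/m\le 2/7$, and handle the exceptional pair $(p,q)=(15,30)$ by a finite direct check. The only cosmetic difference is that the paper bounds $r\le [p,q]\cdot 15=450$ via Lemma~\ref{mbound} and checks all such triples, whereas you enumerate the residual values $m\in\{4,7,9,10,11,15\}$; both are routine.
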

 
 \begin{proof}  By
 Lemma~\ref{previous2},
 we find \(k\) with \(k/p,k/q\) in \([2/7,5/7]^2\).
 Then, as in the proof of Lemma~\ref{mbound},
 we are done as long as
 and then we are in good shape as long as
 \[1 + \frac{2}{7}  -  \frac{J(m')}{m} \ge 1.\]
 But for \(m \notin \{1,2,3,5,6\}\) we have \(J(m')/m \le  2/7\).
 This leaves the exceptional pair~\((p,q) = (15,30)\),
 But we know that the possible \(r\)s must satisfy \([r,p,q]/[p,q] \le 15\),
 by Lemma~\ref{mbound}. This bounds \(r\) in all cases by \([p,q] \cdot 15
 \le 30 \cdot 15 =450\), and 
 and indeed the value of \(n = [2,p,q,r]\) is also 
 bounded by this quantity. But we can compute these cases directly.
\end{proof}

\section{The Regime where 
 \texorpdfstring{\(\min(p,q,r)\)}{minpqr} is large relative to
 \texorpdfstring{\(n\)}{n}}
\label{sec:nbig}

In this section, we study the problem where the \(p,q,r\) are not
too large compared to \(n\). The main theorem of this section is as follows:

   \begin{lemma} \label{circular}
   Suppose that \(\min(p,q,r) \ge m\). Let \(n = [2,p,q,r]\)  and
   suppose that  the number of distinct prime factors of \(n\) is \(r\).
   Then Theorem~\ref{maintheorem}
   holds for \((p,q,r)\) as long as
\[J(n) \le \frac{2m}{15}.\]
Moreover, this holds if \(r < A(m)\) for the values of \(A(m)\)
   in the table below,  or if  \(n < B(m)\).
  \begin{center}
\begin{tabular}{*3c}
 \multicolumn{3}{c}{\emph{Lower bounds for \(n\) in terms of
 \(min(p,q,r)\)}}\\
 \toprule
 $m = \min(p,q,r)$  & $A(m)$ & $B(m)$  \\
\midrule
 \(105\) & $6$ &  \( 30030\)  \\
 \(165\) &   $7$ & \( 510510\)  \\
 \(195\) & $8$ & \( 9699690\)  \\
 \(255\) &   $9$ & \( 223092870\)  \\
 \(300\) &  $10$ & \( 6469693230\)  \\
 \(345\) &  $11$ &  \( 200560490130\)  \\
 \(435\) &  $12$ & \( 7420738134810\)  \\
 \(495\) & $13$ &  \( 304250263527210\)  \\
 \(555\) & $14$ &  \( 13082761331670030\)  \\
 \(675\) & $15$ &  \( 614889782588491410\)  \\
 \(750\) & $16$ &  \( 32589158477190044730\)  \\
 \(795\) &  $17$ & \( 1922760350154212639070\)  \\
 \(885\) & $18$ &  \( 117288381359406970983270\)  \\
 \(990\) &  $19$ & \( 7858321551080267055879090\)  \\
 \(1140\) &  $20$ & \( 557940830126698960967415390\)  \\
\bottomrule
\end{tabular}
\end{center}
 \end{lemma}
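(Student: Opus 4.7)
The plan is to couple Theorem~\ref{5lemma} with a Jacobsthal function estimate. Theorem~\ref{5lemma} supplies a real $t$ satisfying
\[\left|\left(\frac{t}{p},\frac{t}{q},\frac{t}{r}\right) - \Lambda\right| \ge 1 + \frac{1}{5},\]
and the goal is to perturb $t$ to a nearby integer $k$ coprime to $n$ while losing only a controlled amount of $L^1$-distance. Concretely, I would apply Lemma~\ref{approximatem} (with $a = d = 1$, $N = n$, $m_{\mathbf R} = t - 1$) to produce an integer $k$ coprime to $n$ with $|k - t| \le J(n)/2$.

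Since $p,q,r \ge m$, linearity gives
\[\left|\left(\frac{k}{p},\frac{k}{q},\frac{k}{r}\right) - \left(\frac{t}{p},\frac{t}{q},\frac{t}{r}\right)\right|_{1} = |k-t|\left(\frac{1}{p}+\frac{1}{q}+\frac{1}{r}\right) \le \frac{3J(n)}{2m},\]
so the triangle inequality for the $L^1$-distance to $\Lambda$ yields
\[\left|\left(\frac{k}{p},\frac{k}{q},\frac{k}{r}\right) - \Lambda\right| \ge 1 + \frac{1}{5} - \frac{3J(n)}{2m}.\]
The right-hand side is $\ge 1$ precisely when $J(n) \le 2m/15$, which establishes the main inequality of the lemma.

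For the table, the sharp bound $J(n) \le U(r)$ of Lemma~\ref{better} (where $r$ is the number of distinct prime factors of $n$) reduces matters to a mechanical check: each $A(m)$ is arranged so that $U(A(m)-1)$ equals $2m/15$ exactly (for instance $U(5) = 14 = 2\cdot 105/15$ and $U(19) = 152 = 2\cdot 1140/15$), so $r < A(m)$ forces $J(n) \le 2m/15$. The entries $B(m)$ are the primordials $P_{A(m)}$; since any integer with at least $A(m)$ distinct prime divisors is bounded below by $P_{A(m)}$, the condition $n < B(m)$ implies $r < A(m)$ and reduces to the previous case. There is no serious obstacle here — the substance of the argument is contained in the upstream results (Theorem~\ref{5lemma} and Lemma~\ref{better}), and the present lemma is essentially a clean bookkeeping application of them.
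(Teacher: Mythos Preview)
Your proposal is correct and follows essentially the same approach as the paper: start from Theorem~\ref{5lemma}, use Lemma~\ref{approximatem} to replace the real $t$ by an integer $k$ coprime to $n$ with $|k-t|\le J(n)/2$, apply the triangle inequality, and then read off the table from Lemma~\ref{better} and the primordials. Your observation that $U(A(m)-1)=2m/15$ exactly for each row is a clean way to phrase the bookkeeping that the paper leaves implicit.
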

  
  \begin{proof}
  The basic idea 
is to start with Theorem~\ref{5lemma}, which guarantees a real
number \(t = t_{\R}\) so that
\[ \left| \left( \frac{t}{p}, \frac{t}{q}, \frac{t}{r} \right) - \Lambda \right| \ge 1 + \frac{1}{5}.\]
By Lemma~\ref{approximatem}, there
exists a \(k \in \Z\) with \((k,n)=1\) and \( |k - t| \le J(n)/2\). We then have
\[ \left| \left( \frac{k}{p}, \frac{k}{q}, \frac{k}{r} \right) - \Lambda \right|
\ge 1 + \frac{1}{5} - \frac{J(n)}{2} \left( \frac{1}{p} + \frac{1}{q} + \frac{1}{r} \right).\]
Thus Theorem~\ref{maintheorem} holds for \((p,q,r)\) as long
as
 \[\frac{J(n)}{2} 
  \left( \frac{1}{p} + \frac{1}{q} + \frac{1}{r} \right)
   \le \frac{1}{5}\]
   If \(\min(p,q,r) \ge m\), then we are done as long as
\[J(n) \le \frac{2m}{15}, \ \text{or} \ m \ge \frac{15 J(n)}{2}.\]
Suppose this inequality fails.
Then Lemma~\ref{better}, for the various \(m\),
this implies that \(n\) has at least \(A(m) = k\) prime factors
for various \(k\) and thus
\[n \ge B(m) = P_k = \prod_{i=1}^{k} p_k.\]
\end{proof}

In light of this, 
it will be useful to understand what
happens when one of \(p\),  \(q\), and \(r\) is small. We develop
some tools now to understand this case.

\section{The Regime where \texorpdfstring{\(\min(p,q,r)\)}{minpqr} is fixed}
  \label{sec:small}
 In \S~\ref{sec:nbig}, we obtained some control when \(n\) was not too
 large compared to \(\min(p,q,r)\). In this section, we are now in a position to rule out one form of counterexamples
to Theorem~\ref{maintheorem} when \(\min(p,q,r)\) is small.
 Supposing that \(p\) is small, we typically write \((q,r) = (ad,bd)\)
with \((a,b)=1\). Our first observation is that, for a fixed \(p\),
we need only consider finitely many pairs \((a,b)\):

\begin{lemma} \label{bound}
Consider triples~\((p,ad,bd)\), where~\((a,b)=1\) and \(b \ge a\).
Then if this is a  counterexample
to Theorem~\ref{maintheorem}, we have the following inequalities:
\[\begin{aligned}
b & \le 15p, \\
a & \le 15p, \\
ab & \le 165p.
\end{aligned}
\]
\end{lemma}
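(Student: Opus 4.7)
The plan is to apply Lemma~\ref{mbound} to the unordered triple $\{p, ad, bd\}$ in two different orderings, extract the resulting divisibility constraints, and then combine them using the coprimality $(a, b) = 1$.

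First, I would set $m_1 = [p, ad, bd]/[p, ad]$ and $m_2 = [p, bd, ad]/[p, bd]$, and invoke Lemma~\ref{mbound} both with $(p, q, r) = (p, ad, bd)$ and with $(p, q, r) = (p, bd, ad)$. If $(p, ad, bd)$ is a counterexample to Theorem~\ref{maintheorem}, each invocation forces the corresponding $m_i$ into the list $\{1, 2, 3, 4, 5, 6, 7, 9, 10, 11, 15\}$; in particular $m_1, m_2 \le 15$. The next step is to unpack the divisibility content of $m_1$: since $(a, b) = 1$ we have $[ad, bd] = abd$, so $m_1 = bd/\gcd([p, ad], bd)$. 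A prime-by-prime computation (using that $v_\pi(a) = 0$ whenever $v_\pi(b) > 0$) will show that $m_1$ divides $b$ and that the complementary factor $b/m_1$ divides $p$. This gives $b \le m_1 p \le 15p$, and the symmetric analysis with the roles of $a$ and $b$ swapped yields $a \le m_2 p \le 15p$.

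For the final bound $ab \le 165p$, I would exploit coprimality twice. On one hand, since $m_1 \mid b$, $m_2 \mid a$, and $(a, b) = 1$, we have $(m_1, m_2) = 1$, and a brief enumeration of coprime pairs inside $\{1, 2, 3, 4, 5, 6, 7, 9, 10, 11, 15\}$ shows $m_1 m_2 \le 15 \cdot 11 = 165$, with the maximum realized by the pair $\{15, 11\}$. On the other hand, $a/m_2$ and $b/m_1$ are divisors of $p$ whose prime supports lie in those of $a$ and $b$ respectively and are therefore disjoint, so their product $(ab)/(m_1 m_2) = (a/m_2)(b/m_1)$ also divides $p$. Combining gives $ab \le m_1 m_2 \cdot p \le 165p$. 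The only slightly delicate step is the prime-by-prime verification that $b/m_1 \mid p$, but this is entirely routine valuation bookkeeping and poses no substantive obstacle.
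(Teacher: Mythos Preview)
Your proposal is correct and follows essentially the same approach as the paper: apply Lemma~\ref{mbound} to two orderings of the triple, extract the divisibility relations between $a,b,p$ and the resulting $m$-values, and combine via $(a,b)=1$. The only cosmetic difference is that the paper records the constraint as $b/(b,p)\mid m$ and $a/(a,p)\mid m'$ (and argues $(m,m')=1$ directly from the definition of $m,m'$), whereas you record it as $m_1\mid b$, $b/m_1\mid p$ (and get $(m_1,m_2)=1$ from $m_1\mid b$, $m_2\mid a$); both packagings yield the same inequalities and the same $mm'\le 165$ endgame.
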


\begin{proof} Let us compute~\(m = [p,q,r]/[p,q]\)
 and \(m' = [p,q,r]/[p,r]\). 
Certainly~\([p,q,r]\) is divisible by~\(bd\). The largest factor of~\(bd\) in~\(ad\) is~\(d\),
and the largest factor of~\(p\) in~\(bd\) divides~\(p\), so
\[\left. \frac{b}{(b,p)} \right| m, \left. \qquad \frac{a}{(a,p)} \right| m' \]
By Lemma~\ref{mbound}, we have 
\[m,m' \in  1, 2, 3, 4, 5, 6, 7, 9, 10, 11, 15.\]
The inequality for \(a\) and \(b\) then follows.
On the other hand, we see that \((m,m')=1\) since a prime dividing \(m\)
or \(m'\)
divides \(r\) or \(q\) respectively to a higher power than any other element.
So from the divisibility
\[\left.  \frac{ab}{(ab,p)} = \frac{b}{(b,p)}\frac{a}{(a,p)} \right| mm'\]
we get \(ab | pmm' \le 165p\).
\end{proof}

\subsection{The strategy for small
\texorpdfstring{\(p\)}{p}}
Now let us explain our approach to triples of the form \((p,ad,bd)\).
We first choose a \(k\) so that \(x = k/p \bmod 2\) lies in \([1/6,1/2]\).
We then consider new \(k'\) of the form \(k' = k + 2pm\),
fixing the congruence class modulo \(2p\). The idea is that this is not too
restrictive for \(p\) small, and guarantees that \(k/p \bmod 2\) is not too small.
In order to choose \(m\), we find a real number \(m_{\R}\) so that, with
\[y = \frac{k + 2pm_{\R}}{ad} \bmod 2, \quad
z = \frac{k + 2pm_{\R}}{bd} \bmod 2,\]
we have
\[ |(x,y,z) - \Lambda| \ge \frac{7}{6}.\]
Note that the existence of such a real number \(m_{\R}\) depends only
on \(x\) and \(a\) and \(b\) and not on \(d\).
Then we show, for \(d\) sufficiently large, we can find an integer \(m\)
sufficiently close to \(m_{\R}\) so that, for \(k' = k+2pm\), 
we have \((k',2pad) = 1\) and
the triple \(\v = (k'/p,k'/q,k'/r)\) is close enough to \((x,y,z)\) to ensure
that \(|\v - \Lambda| \ge 1\).

The following is elementary:
\begin{lemma} Let \(x \in [1/6,1/2]\).
Let~\(\Omega_{[0,1]}(x) \subset [0,1]^2\) denote the region of points~\((y,z)\) in the plane
satisfying the following four inequalities:
\[ \begin{aligned}
1   - \frac{1}{6} + x & \ge z + y   \ge 1  + \frac{1}{6} - x, \\
1 - \frac{1}{6} - x & \ge z - y  \ge -1 + \frac{1}{6} + x, \end{aligned}
\]
Let \(\Omega(x) \subset [0,2]^2\) denote the union of four copies
of \(\Omega_{[0,1]}(x)\) rotated by multiples of~\(\pi/2\) around
the point \((1,1)\).
Then \(|(x,y,z) - \Lambda| \ge 1 + 1/6\) for \((y,z) \in \Omega(x)\).
\end{lemma}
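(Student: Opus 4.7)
The plan is to pin down, for each fixed $x \in [1/6,1/2]$, the exact set of $(y,z) \in [0,2]^2$ with $|(x,y,z) - \Lambda| \ge 7/6$, by identifying the finitely many lattice points that can realise the minimum on a small neighbourhood and then reducing from the square $[0,2]^2$ to its subsquare $[0,1]^2$ via symmetries of $\Lambda$.

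First I would fix $(y,z) \in [0,1]^2$ and observe that the only $\lambda \in \Lambda$ that can satisfy $|(x,y,z) - \lambda|_1 < 7/6$ must be one of the four vertices of the unit cube lying in $\Lambda$, namely
\[ \lambda_0 = (0,0,0), \quad \lambda_1 = (1,1,0), \quad \lambda_2 = (1,0,1), \quad \lambda_3 = (0,1,1). \]
This is a short case check using $x \ge 1/6$: any other $\lambda \in \Lambda$ has some coordinate shifted by at least $2$ or lies in a row with the parity forcing a displacement of at least $1$ in a second coordinate; in each subcase the resulting $\ell^1$-distance is already $\ge 7/6$ (for instance $|(x,y,z) - (0,0,2)|_1 \ge x + 0 + 1 \ge 7/6$, and pushing any coordinate further out only increases the bound). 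Writing out $|(x,y,z)-\lambda_i|_1 \ge 7/6$ for $i = 0,1,2,3$ yields the four linear inequalities
\[ x+y+z \ge \tfrac{7}{6}, \quad 2-x-y+z \ge \tfrac{7}{6}, \quad 2-x+y-z \ge \tfrac{7}{6}, \quad 2+x-y-z \ge \tfrac{7}{6}, \]
which, by adding and subtracting in pairs, rearrange into the two pairs of bounds for $y+z$ and $z-y$ defining $\Omega_{[0,1]}(x)$.

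Finally, to extend to $[0,2]^2$, I would invoke the $S_4$-symmetry of Lemma~\ref{weyl}. The three non-trivial rotations of the $(y,z)$-plane by multiples of $\pi/2$ about $(1,1)$ are realised by the maps
\[ (x,y,z) \mapsto (x, 2-z, y), \quad (x,y,z) \mapsto (x, 2-y, 2-z), \quad (x,y,z) \mapsto (x, z, 2-y); \]
each is an $\ell^1$-isometry of $\R^3$ that fixes the first coordinate and permutes the cosets $\{\lambda_0,\lambda_1,\lambda_2,\lambda_3\}$ modulo $2\Z^3$, so each preserves both the hypothesis on $x$ and the distance function $|\cdot - \Lambda|$. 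Their orbits carry $\Omega_{[0,1]}(x)$ into the three other unit subsquares of $[0,2]^2$, and the union is precisely $\Omega(x)$. The only even mildly subtle step is the case analysis excluding far-off lattice points from being closer than $7/6$; this is elementary once one combines the parity constraint on $\Lambda$ with the given ranges for $x$, $y$, and $z$.
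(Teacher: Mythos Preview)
Your argument is correct and complete. The paper actually omits the proof entirely, introducing the lemma with ``The following is elementary:'' and nothing more. Your write-up is the natural way to unpack that assertion: reduce to $(y,z)\in[0,1]^2$, observe that the four $\Lambda$-points $(0,0,0)$, $(1,1,0)$, $(1,0,1)$, $(0,1,1)$ are the only candidates for achieving distance $<7/6$ (the bound $x\ge 1/6$ is exactly what is needed to rule out points like $(0,0,2)$), and then read off the four linear inequalities. The extension to $[0,2]^2$ by the rotations about $(1,1)$ is fine; each of your three maps is a translation by a vector of $\Lambda$ composed with an element of $(\Z/2\Z\wr S_3)\cap\mathrm{SO}(3)$, so preserves $|\cdot-\Lambda|$ as you say. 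There is nothing to compare against in the paper, and nothing to fix in your version.
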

Note that when~\(x=1/6\) then~\(\Omega(x)\) consists of four lines,
and when \(x = 1/2\), it consists of four squares, and otherwise
it consists of four (non-square) rectangles. Here are pictures
of \(\Omega(x)\) for \(x =1/6\), \(1/3\) and \(1/2\) together
with 
\[ \Omega:=\cap_{x \in [1/6,1/2]} \Omega(x),\]
where \(\Omega\) is the region given by four rotated
copies of the line:
\[x + y = 1, 1/3 \le x,y \le 2/3.\]
Pictures of these regions are as follows:
\begin{center} 
\begin{figure}[H]
  \includegraphics[width=9cm]{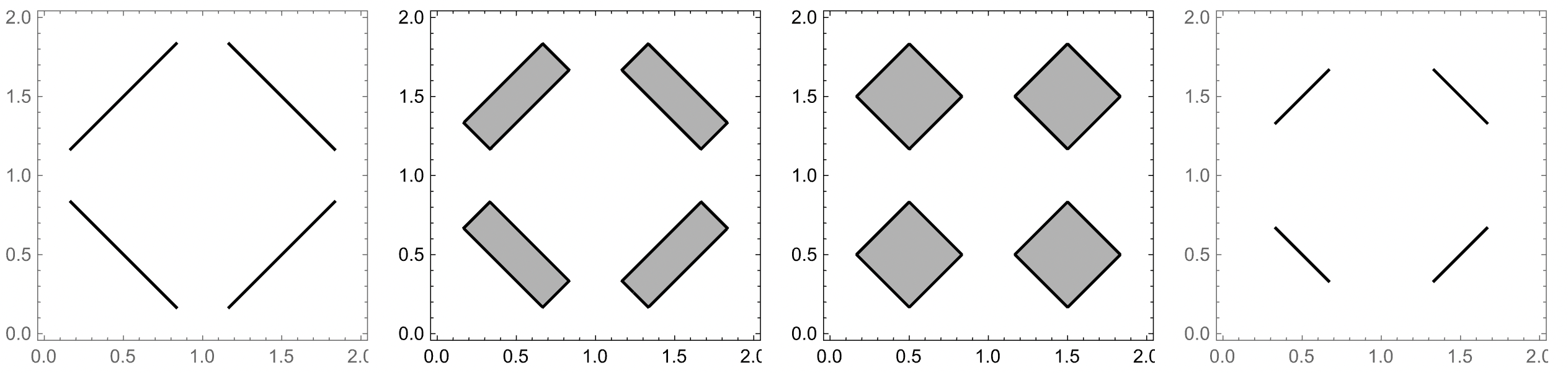}
  \caption{The region \(\Omega(x)\) for \(x = 1/6, 1/3, 1/2\) and the intersection \(\Omega\) of all \(\Omega(x)\).}
  \label{fig:omega}
  \end{figure}
\end{center}

\begin{lemma}  \label{geometry}
Let \((a,b)\) be any pair of coprime positive integers.
Then there exists a real \(t\) so that \((t/a,t/b) \bmod 2\) lies on \(\Omega\).
\end{lemma}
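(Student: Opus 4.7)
The plan is to exploit only the two ``anti-diagonal'' arcs of $\Omega$, namely the part of $\Omega$ lying on the closed curve $C = \{y + z \equiv 1 \pmod 2\} \subset (\R/2\Z)^2$.  Projected to the first coordinate, this union is the set $[1/3,2/3] \cup [4/3,5/3]$ in $\R/2\Z$.

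First I would parameterize the intersection of the closed loop $L = \{(t/a,t/b) \bmod 2 : t \in \R\}$ with $C$.  Solving $t(a+b)/(ab) \equiv 1 \pmod 2$ yields exactly $a+b$ values in one period, namely $t_k = ab(2k+1)/(a+b)$ for $k = 0, 1, \ldots, a+b-1$, with corresponding first coordinates $y_k = b(2k+1)/(a+b) \bmod 2$.  Since $\gcd(b,a+b) = \gcd(a,b) = 1$, the set $\{2bk/(a+b) \bmod 2 : k \in \Z\}$ coincides with the cyclic subgroup of $\R/2\Z$ of order $a+b$, so the $y_k$ form $a+b$ equispaced points with common gap $2/(a+b)$.

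If $a+b \ge 6$, the gap is at most $1/3$, which equals the length of $[1/3,2/3]$.  A standard pigeonhole argument (any closed arc whose length is at least the gap of an equispaced set contains a point of the set) then forces some $y_k$ to lie in $[1/3,2/3]$, and the corresponding point $(y_k, 1-y_k)$ lies on the first anti-diagonal segment of $\Omega$.

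The remaining cases $a+b \le 5$ are finite: by the $(a,b) \leftrightarrow (b,a)$ symmetry of the problem (under which $\Omega$ is invariant) we may assume $a \le b$, leaving only $(1,1), (1,2), (1,3), (1,4), (2,3)$.  Checking by hand, the witnesses $t = 1/2, 2/3, 3/2, 12/5, 6/5$ respectively do the job: in four cases they lie on segment $1$, while for $(1,3)$ the anti-diagonal intersection points (equispaced with gap $1/2$) miss $[1/3,2/3] \cup [4/3,5/3]$ entirely and one is forced to use the diagonal segment $2$ at $(3/2,1/2)$.  The main obstacle is therefore this single degenerate case $(1,3)$, which is also precisely the reason $\Omega$ must include the diagonal arcs in addition to the anti-diagonal ones.
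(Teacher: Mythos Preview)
Your proof is correct and takes a genuinely different route from the paper's.

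The paper parametrizes the line by its slope $r$, reduces by symmetry to $0 < r \le 1$, and then covers this interval by a countable family of sub-intervals
\[
\left[1,\tfrac12\right] \cup \left[\tfrac12,\tfrac{4}{11}\right] \cup \left[\tfrac{4}{11},\tfrac14\right] \cup \left[\tfrac18,\tfrac14\right] \cup \bigcup_{n\ge 1}\left[\tfrac{1}{4+6n},\tfrac{2}{5+6n}\right],
\]
exhibiting on each sub-interval an explicit segment of $\Omega$ that the line must cross. This is direct but somewhat ad hoc, and the infinite tail of intervals has to be handled by a one-parameter family of witnesses.

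Your approach is more structural: you slice by the anti-diagonal curve $C=\{y+z\equiv 1 \bmod 2\}$, observe that the $a+b$ intersection points of $L$ with $C$ project to an \emph{equispaced} set in $\R/2\Z$ (this is exactly where coprimality of $a$ and $b$, hence of $b$ and $a+b$, enters), and then a clean pigeonhole with the closed arc $[1/3,2/3]$ finishes every case with $a+b\ge 6$ at once. The residual finite check for $a+b\le 5$ is tiny, and your observation that $(a,b)=(1,3)$ is the unique pair forcing the use of a diagonal arc is a nice structural explanation for why $\Omega$ needs four segments rather than two.

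The trade-off: the paper's argument never leaves the geometry of the picture and produces explicit $t$'s uniformly in the slope, which fits the paper's later use of the lemma; your argument is shorter, more conceptual, and isolates the arithmetic content (the equidistribution of $bk \bmod (a+b)$) cleanly. Either would serve equally well in the application to Lemma~\ref{medium}.
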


\begin{proof} Let \(r\) denote the slope, and consider equivalently
the line \((t,t r) \bmod 2\).
Without loss of generality by symmetry, we may assume that \(0 < r \le 1\).
In particular, we may assume that
\[r \in \left[1,\frac{1}{2} \right] 
\cup  \left[\frac{1}{2},\frac{4}{11} \right] 
\cup  \left[\frac{4}{11},\frac{1}{4} \right] 
\cup  \left[\frac{1}{8},\frac{1}{4} \right] 
\cup \bigcup_{n \ge 1} 
 \left[\frac{1}{4 + 6n},\frac{2}{5+6n} \right] 
\]
To see this contains \([0,1)\), note that
\[\begin{aligned}
\frac{2}{11} &  > \frac{1}{8},\\
\frac{2}{5+6n} & > \frac{1}{4 + 6(n+1)}, \ n \ge 1 \ge \frac{1}{2}.
\end{aligned}\]
Now for any \(r\) in the final segments the line intersects the line between
the two points
\[ \left( 2n + 1 + \frac{1}{3}, \frac{1}{3} \right), 
\left( 2n + 1 + \frac{2}{3}, \frac{2}{3} \right).\]
For the initial four segments, they also intersect four corresponding
lines as observed in Figure~\ref{fig:lines}.
\begin{center}
\begin{figure}[H]
  \includegraphics[width=6cm]{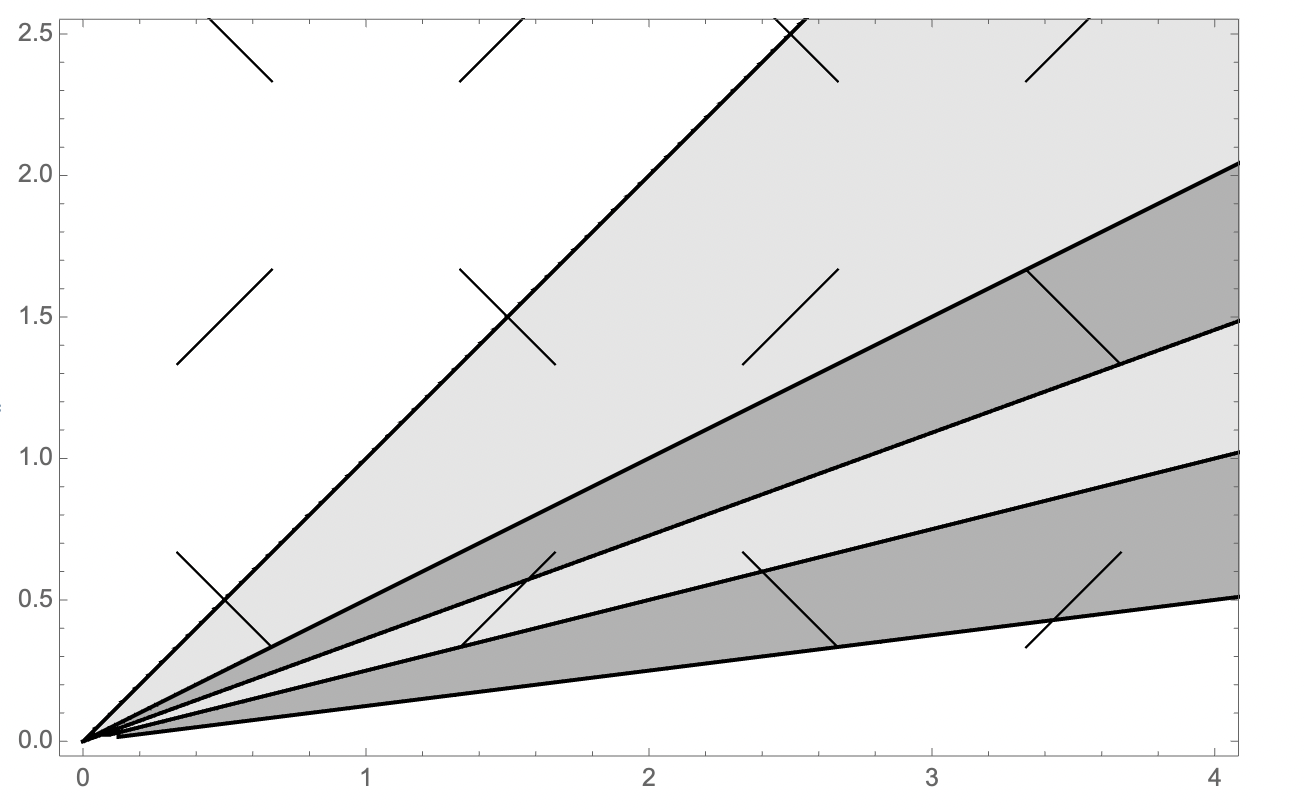}
  \caption{Lines intersecting \(\Omega\).}
  \label{fig:lines}
  \end{figure}
\end{center}
\end{proof}
Now let us return to the~\((p,an,bn)\) problem.  

\begin{lemma} \label{medium} Suppose that \(p \le 33\).
Then Theorem~\ref{maintheorem}
holds for \((p,q,r) = (p,an,bn)\).
\end{lemma}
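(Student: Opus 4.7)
The plan is to execute the strategy laid out in the paragraph immediately preceding the statement in quantitative form. By Lemma~\ref{bound}, the coprime pairs $(a,b)$ with $b \ge a$, $b \le 15p$ and $ab \le 165p$ form a finite set, so for each $p \le 33$ only the parameter $d$ is unbounded. I would treat each triple $(p,a,b)$ separately, proving the result for all but finitely many $d$ by a soft Jacobsthal estimate and clearing the residual cases by direct computation.

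Given $(p,a,b)$, the first step is to invoke Lemma~\ref{onedim} to produce an odd integer $k$ with $(k,p)=1$ and $x := k/p \bmod 2 \in [1/6,1/2]$. Next, Lemma~\ref{geometry} yields a real number $s$ with $(s/a,s/b) \bmod 2 \in \Omega$; writing $s = (k+2pm_\R)/d$ fixes $m_\R \in \R$ such that
\[
(y_\R, z_\R) := \left( \frac{k+2pm_\R}{ad}, \frac{k+2pm_\R}{bd} \right) \bmod 2 \in \Omega.
\]
Since $\Omega \subset \Omega(x)$ regardless of the specific value of $x$, this gives $|(x, y_\R, z_\R) - \Lambda| \ge 7/6$.

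The third step is to perturb $m_\R$ to an integer $m$ so that $k' := k + 2pm$ is coprime to $n = [2,p,q,r] = \mathrm{lcm}(2p,abd)$. Let $N'$ denote the odd part of $abd$ with any factor of $p$ removed (coprimality to $p$ is automatic from $(k,p)=1$, and coprimality to $2$ from $k$ odd). Lemma~\ref{approximatem} then yields such an $m$ with $|m - m_\R| \le J(N')/2$, and the resulting triple $\v = (k'/p, k'/q, k'/r)$ satisfies the $L^1$-estimate
\[
|\v - (x, y_\R, z_\R)| \,\le\, 2p|m - m_\R| \left( \frac{1}{ad} + \frac{1}{bd} \right) \,\le\, \frac{p(a+b)\,J(N')}{abd}.
\]
By the triangle inequality, $|\v - \Lambda| \ge 7/6 - p(a+b)J(N')/(abd)$, which is at least $1$ whenever this perturbation is at most $1/6$. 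Using $J(N') \le 2\sqrt{abd}$ from Lemma~\ref{sqbound}, this is guaranteed as soon as $d \ge 144\, p^2 (a+b)^2 / (ab)$.

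The main obstacle is computational: for each of the bounded collection of triples $(p,a,b)$ and each $d$ below this explicit threshold, Theorem~\ref{maintheorem} must be checked directly. The sharper Jacobsthal bounds of Lemma~\ref{better} cut the thresholds down substantially, and where possible one can also invoke the stronger $[4/10,1/2]$ version of Lemma~\ref{onedim} to reduce the size of the residual check. The residual finite case analysis is of the same character as the hyperplane check carried out at the end of \S\ref{sec:fourier} and is performed in \texttt{magma}.
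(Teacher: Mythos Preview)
Your proposal is correct and follows essentially the same line as the paper's proof: fix $x=k/p\bmod 2\in[1/6,1/2]$ via Lemma~\ref{onedim}, land $(y,z)$ on $\Omega$ via Lemma~\ref{geometry}, and then perturb $m_\R$ to an integer using Lemma~\ref{approximatem} and a Jacobsthal estimate, leaving a bounded range of $d$ to check by machine. Your handling of coprimality (passing to the part $N'$ of $abd$ prime to $2p$) is in fact slightly cleaner than the paper's wording; the paper simply invokes $J(abd)$, and then bootstraps once through Lemma~\ref{better} (at most $10$ prime factors, hence $J\le 46$) to bring the threshold down to $d\le 276\,p(a+b)/(ab)$ before running the final loop, exactly the refinement you allude to.
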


\begin{proof}  Write \((p,q,r) = (p,ad,bd)\)
with \((a,b)=1\) and \(b \ge a \ge 1\).
\ By Lemma~\ref{bound},
we may assume that \(a,b \le 15 p\),
and \(a b \le  165p\).
If \(n = [2,p,q,r]\), then \(n\) divides \(2abdp\).
By Lemma~\ref{onedim}, we choose a \((k,2p) = 1\)
with \(k/p \bmod 2 \in [1/6,1/2]\).
By Lemma~\ref{geometry},
there exists real numbers~\(m_{\R}\) and \(t_{\R}\) so that
\(k + 2p m_{\R} = t_{\R}\) and
\[ \left( \frac{t_{\R}}{da},  \frac{t_{\R}}{db} \right) \in \Omega \bmod 2.\]
By Lemma~\ref{approximatem},
there exists an~\(m \in \Z\) so that~\((k + 2pm,abd) = 1\)
with~\(|m - m_{\R}| \le J(abd)/2\).
We find that, modulo~\(2\),
\[\begin{aligned}
& \left( \frac{k + 2pm}{p}, \frac{k + 2pm}{ad},  \frac{k + 2pm}{bd} \right) \\
 & = \left(\frac{k}{p},\frac{k + 2p m_{\R}}{ad},\frac{k + 2p m_{\R}}{bd} \right)
+ 2p (m-m_{\R}) \left(0,\frac{1}{ad},\frac{1}{bd}\right). \\
& = \left(\frac{k}{p},\frac{t_{\R}}{ad},\frac{t_{\R}}{bd} \right)
+ 2p (m-m_{\R}) \left(0,\frac{1}{ad},\frac{1}{bd}\right). \\
  \end{aligned} \\
\]
By construction, the first point is \(7/6\) from \(\Lambda\).
Thus, by the triangle inequality,
\[\begin{aligned} \left| k \v - \Lambda \right|  & \ge \frac{7}{6}  - 2p |m - m_{\R}|
\left(\frac{1}{ad} + \frac{1}{bd} \right) \\
& \ge 1 + \frac{1}{6}  - p J(a b d) \left(\frac{1}{ad} + \frac{1}{bd} \right)  \end{aligned} \]
This is greater than one as long as
\[\frac{1}{6}   \ge p J(a b d) \left(\frac{1}{ad} + \frac{1}{bd} \right),\]
or rearranging a little bit more, as long as
\begin{equation}
\label{backtoback}
J(a b d) \le \frac{a  b d}{6p(a + b)}.
\end{equation}
Note that with \(15 p \ge b,a\), and \(33 \ge p\), this would certainly follow
if we knew that
\[J(a b d) \le \frac{a  b d}{198(495+495)} = \frac{a b d}{196020}.\]
But we know by Lemma~\ref{sqbound} that~\(J(a b d) \le 2 \sqrt{a b d}\).
Hence we are done as long as
\[2 \sqrt{a b d} \le   \frac{a b d}{196020},\]
which holds as soon as
\[a b d \ge 4 \cdot 196020^2 = 153695361600.\]
Hence we can assume that \(a b d\) is less than this quantity.
But then we know that \(a b d\) has at most \(10\) prime factors, since
\[P_{11} = 200560490130  
\ge 153695361600.\]
This allows us to use better bounds on \(J(a b d)\),
namely that \(J(a b d) \le 46\) by Lemma~\ref{better}, and thus \eqref{backtoback}
is satisfied (and we are done) as long as
\[a b d \ge 46 \cdot 6p(a+b),\]
which certainly is satisfied if
\[ d \ge 276 \cdot  \frac{p(a+b)}{ab}.\]
With \(495 \ge 15p \ge b,a \ge 1\), the RHS is maximized
with \(b=a=1\), and so we are done if
\(d \ge 18216\).
Hence we compute over all triples \(p,a,b,d\) with:
\begin{enumerate}
\item \(p = 2, \ldots, 33\),
\item \(a \le b \le 15p\) with \((a,b) = 1\) and \(a b \le 265p\).
\item \(\displaystyle{d \le \frac{276 p(a+b)}{a b}}\).
\end{enumerate}
Checking all these cases, we complete the proof of the Lemma.
\end{proof}

\subsection{Self-improvement}

We can now feed the results of this section back into Lemma~\ref{mbound4}
to prove a stronger result:

\begin{lemma}  \label{prep} If \((p,q,r)\) is any triple which does not satisfy the conclusion
of Theorem~\ref{maintheorem}, then we can assume:
\begin{enumerate}
\item \(\min(p,q,r) > 33\).
\item For any ordering, of the triple, if we let \(m = [p,q,r]/[p,q]\),
then we may assume that \(m \in \{1,2,3,5,6\}\).
\end{enumerate}
\end{lemma}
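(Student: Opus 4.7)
The proposed proof is a short packaging argument that combines Lemma~\ref{medium} and Lemma~\ref{mbound4}; there is no new content to introduce. I sketch how the two items fall out in turn.

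For claim (1), my plan is to argue by contrapositive. Suppose $\min(p,q,r) \le 33$. Since Theorem~\ref{maintheorem} and the hypothesis of being a counterexample are both symmetric under permutations of $(p,q,r)$, I may relabel so that the minimum sits in the first slot, giving $p \le 33$. Writing $(q,r) = (ad,bd)$ with $(a,b) = 1$ and $b \ge a$, the triple has the form $(p,ad,bd)$ treated in Lemma~\ref{medium}, which concludes that Theorem~\ref{maintheorem} holds. This contradicts the assumption that $(p,q,r)$ is a counterexample, so $\min(p,q,r) > 33$.

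For claim (2), I invoke claim (1) to ensure that \emph{all} three of $p$, $q$, $r$ exceed $33$. Now fix any ordering of the triple and let $m = [p,q,r]/[p,q]$. The pair $(p,q)$ then satisfies $p,q \ge 34$, so \emph{a fortiori} $p,q \ge 15$ and $p,q \notin \{18,21,33\}$; in particular we are not in the exceptional pair $(15,30)$ of Lemma~\ref{mbound4}. The hypotheses of Lemma~\ref{mbound4} are therefore satisfied, and its conclusion is that if $m \notin \{1,2,3,5,6\}$ then Theorem~\ref{maintheorem} holds for $(p,q,r)$. Since $(p,q,r)$ is assumed to be a counterexample, we must have $m \in \{1,2,3,5,6\}$. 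As the ordering was arbitrary, this gives the conclusion of (2).

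There is no serious obstacle in either step; the only point that requires a moment's attention is checking that the bound $\min(p,q,r) > 33$ from (1) is strong enough to avoid the isolated exclusions $\{15,18,21,30,33\}$ built into Lemma~\ref{mbound4}, which it clearly is. Thus the lemma is essentially a convenient restatement of the previously proved facts, set up so that later sections may work under the standing assumptions $\min(p,q,r) > 33$ and $m \in \{1,2,3,5,6\}$ for every choice of coordinate to single out.
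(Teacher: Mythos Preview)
Your proof is correct and follows exactly the same approach as the paper, which simply cites Lemma~\ref{medium} for (1) and Lemma~\ref{mbound4} for (2); you have merely spelled out the contrapositive and the verification of hypotheses in more detail. One minor remark: your aside about avoiding the pair $(15,30)$ is unnecessary, since Lemma~\ref{mbound4} as stated already handles that case internally.
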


\begin{proof}
The first claim follows from Theorem~\ref{medium}.
The second claim follows from Lemma~\ref{mbound4}.
\end{proof}

\subsection{Increasing the range of \texorpdfstring{\(p\)}{p}}

Now that we know that \(p,q,r> 33\), we can improve
upon Lemma~\ref{medium} by
using Lemma~\ref{prep}.

\begin{lemma} \label{mediumplus} Suppose that \(p \le 885\).
Then Theorem~\ref{maintheorem}
holds for \((p,q,r) = (p,an,bn)\).
\end{lemma}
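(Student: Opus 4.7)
The plan is to mimic the argument of Lemma~\ref{medium}, but with two improvements gained from Lemma~\ref{prep}: first, we may assume $\min(p,q,r) > 33$, so in particular the pair $(q,r)$ satisfies $q,r > 33$ (and in particular $q,r \ge 15$ with none of $q,r$ equal to the exceptional values $18,21,33$); second, for every ordering of the triple the ratio $[p,q,r]/[p,q]$ lies in $\{1,2,3,5,6\}$. Write $(p,q,r) = (p,ad,bd)$ with $(a,b)=1$ and $b \ge a$. The argument of Lemma~\ref{bound}, applied with this restricted list of $m$-values instead of the longer list from Lemma~\ref{mbound}, yields the sharper bounds $a \le 6p$, $b \le 6p$ and $ab \le 6 \cdot 5 \cdot p = 30p$.

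Next, I use the \emph{strong} version of Lemma~\ref{onedim}: since $p > 33 \ge 15$ and $p \notin \{18,21,33\}$, there exists $(k,2p) = 1$ with $k/p \bmod 2 \in [4/10, 1/2]$. Denote $x = k/p \bmod 2$; since $x \ge 2/5$, the region $\Omega(x)$ defined before Lemma~\ref{geometry} is substantially larger than the extreme case $x = 1/6$ handled there. I would then prove the analog of Lemma~\ref{geometry} for the new common region
\[
\Omega' := \bigcap_{x \in [2/5,\,1/2]} \Omega(x),
\]
namely that for every coprime pair $(a,b)$ there is a real number $t$ with $(t/a, t/b) \bmod 2 \in \Omega'$. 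This is again a question about lines of rational slope on the $2$-torus meeting a union of rectangles near $(1,1)$; the larger $x$-range makes $\Omega'$ fatter, so the case analysis on slopes is actually easier than in Lemma~\ref{geometry}.

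Setting $t_{\R} = k + 2pm_{\R}$ with $(t_{\R}/ad, t_{\R}/bd) \in \Omega' \bmod 2$ and using Lemma~\ref{approximatem} to find an integer $m$ with $(k+2pm,abd)=1$ and $|m-m_{\R}| \le J(abd)/2$, the triangle inequality gives
\[
\bigl| (k+2pm)\v - \Lambda \bigr| \;\ge\; 1 + c \;-\; p\, J(abd)\!\left(\frac{1}{ad} + \frac{1}{bd}\right),
\]
for some explicit constant $c > 1/6$ coming from the new region $\Omega'$. Using $a,b \le 6p$ and the bound $J(abd) \le 2\sqrt{abd}$, this reduces the problem to triples with $abd$ below an explicit cutoff. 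Above that cutoff the bound $J \le 2\sqrt{\cdot}$ suffices; below it, $abd$ has at most some small number of prime factors and the much sharper table of Lemma~\ref{better} takes over, reducing the problem to an explicit finite list of $(p,a,b,d)$ that is checked by \texttt{magma}.

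The main obstacle is not any one step but the bookkeeping: with $p$ ranging up to $885$, the cutoff on $abd$ will be several orders of magnitude larger than in Lemma~\ref{medium}, so the final computational verification will be substantially heavier, and one must combine the $\sqrt{\cdot}$ bound with Lemma~\ref{better} carefully (probably splitting on the number of prime factors of $abd$) to keep the list of residual cases manageable. A secondary issue is verifying the analog of Lemma~\ref{geometry} rigorously for $\Omega'$, since the precise shape of $\Omega'$ (four rectangles meeting along short line segments, as in Figure~\ref{fig:omega}) has to be computed to know exactly which slopes are covered, but this is a finite and elementary check.
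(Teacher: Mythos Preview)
Your proposal has the right skeleton --- write $(q,r)=(ad,bd)$, use Lemma~\ref{prep} to sharpen the bounds to $a,b\le 6p$ and $ab\le 30p$, and then bootstrap Jacobsthal estimates --- but it diverges from the paper in one unnecessary and one slightly confused direction.

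First, the unnecessary part. The paper does \emph{not} invoke the strong form of Lemma~\ref{onedim} or redo Lemma~\ref{geometry} for any new region. It simply reuses the argument of Lemma~\ref{medium} verbatim with $c=1/6$; the entire gain over Lemma~\ref{medium} comes from replacing $a,b\le 15p$, $ab\le 165p$ by $a,b\le 6p$, $ab\le 30p$. So the ``secondary issue'' you flag (proving an analog of Lemma~\ref{geometry} for $\Omega'$) does not arise at all.

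Second, the confused part. As you define it, $\Omega'=\bigcap_{x\in[2/5,1/2]}\Omega(x)$ is an intersection of regions each of which only guarantees $|(x,y,z)-\Lambda|\ge 1+1/6$; intersecting over fewer $x$ makes $\Omega'$ \emph{larger} than $\Omega$, but the distance guarantee is still exactly $1+1/6$, not $1+c$ with $c>1/6$. To genuinely increase $c$ you would have to change the constant $1/6$ inside the defining inequalities of $\Omega(x)$, which then shrinks the region and forces you to redo Lemma~\ref{geometry} for a \emph{smaller} target --- extra work with no payoff here.

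What the paper does instead, and what makes $B=885$ feasible, is an \emph{iterated} bootstrap on the Jacobsthal bound that you only hint at. Starting from $J(abd)\le 2\sqrt{abd}$ one gets done unless $abd<20736\,B^4\approx 1.27\times 10^{16}<P_{14}$, hence $abd$ has at most $13$ prime factors and $J(abd)\le 74$. Feeding this back in, one is done unless $abd<5328\,B^2\approx 4.17\times 10^{9}<P_{10}$, hence at most $9$ prime factors and $J(abd)\le 40$. One more pass gives the clean residual range $d\le 240\,p(a+b)/(ab)$, which together with $34\le p\le 885$, $a\le b\le 6p$, $ab/(ab,p)\le 30$ is the finite list handed to \texttt{magma}. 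Your single split on the number of prime factors would leave a much larger residual list; the two-step descent $74\to 40$ is the point.
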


\begin{proof}
We proceed exactly as in Lemma~\ref{medium} except
now with the additional assumption that \(p > 33\).
Let \(B = 885\) be our bound for \(p\).
Since \(p > 33\),
we know by Lemma~\ref{prep}
that \(\max(a,b) \le 6p\), and following Lemma~\ref{bound}
we also find that:
\[\begin{aligned}
b & \le 6p, \\
a & \le 6p, \\
ab & \le 30p.
\end{aligned}
\]

Returning to~\eqref{backtoback}, we have
\begin{equation}
J(a b d) \le \frac{a  b d}{6p(a + b)}.
\end{equation}
This would follow if we knew that
if we knew that
\[J(a b d) \le \frac{a  b d}{6B(6B + 6B)} = 
\frac{a b c}{72 B^2}.\]
But we know by Lemma~\ref{sqbound} that~\(J(a b d) \le 2 \sqrt{a b d}\).
Hence we are done as long as
\[2 \sqrt{a b d} \le   \frac{a b d}{72 B^2},\]
which holds as soon as
\begin{equation} \label{withB}
a b d \ge 4 \cdot (72 B^2)^2 = 20736 B^4 = 
12720320883360000 \sim 1.2 \times 10^{16}.
 \end{equation}
 Now we have
 \[P_{14} = 13082761331670030 \sim 1.3 \times 10^{16} > 12720320883360000.\]
 This means that we may assume that \(abd\) as at most \(13\) factors.
 This allows us to use better bounds on \(J(a b d)\),
namely that \(J(a b d) \le 74\) by Lemma~\ref{better}, and thus \eqref{backtoback}
is satisfied (and we are done) as long as
\[a b d \ge 74 \cdot 6B \cdot (12B) \ge 74 \cdot 6p(a+b),\]
and so we are done if
\[a b d \ge 5328 B^2 = 4173022800 < 6469693230 = P_{10},\]
from which we deduce that \(a b d\) has at most \(9\) prime factors.
Repeating this process, we
 feed this bound back into the the same argument to deduce that
 \(J(a b d) \le 40\). Thus we are done as long as
 \[a b d \ge 240p(a+b),\]
which certainly is satisfied if
\[ d \ge 240 \cdot  \frac{p(a+b)}{ab}.\]
Thus it remains to loop over all \(p,a,b,d\) with:
\begin{enumerate}
\item \(p = 34, \ldots, B\),
\item \(a \le b \le 6p\) with \((a,b) = 1\) and \(\displaystyle{\frac{a b}{(ab,p)} \le 30}\).
\item \(\displaystyle{d \le \frac{240 p(a+b)}{a b}}\).
\end{enumerate}
We check that this does not lead to any new triples.
For remarks about the computational aspect of this proof,
see~\S~\ref{comment}.
\end{proof}

\section{Completing the Argument}
\label{sec:resolution}
We now combine the results of the last two sections to complete the argument.
We begin with a special case:

\begin{lemma}  \label{odd}
If \(p,q,r\) are all odd, then Theorem~\ref{maintheorem}
applies for this triple.
\end{lemma}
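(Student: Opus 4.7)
The plan is to exploit the odd parity to make a completely explicit and essentially optimal choice of~$k$. Since $p,q,r$ are all odd, $N = [p,q,r]$ is odd, so $n = [2,p,q,r] = 2N$ and $2$ is a unit modulo $N$. I would pick $k \equiv 2^{-1} \pmod{N}$ and then replace $k$ by $k + N$ if necessary to make it odd; since $2k \equiv 1 \pmod N$ forces $\gcd(k, N) = 1$, the resulting $k$ is automatically coprime to $n$.

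The next step is to pin down $k/p \bmod 2$ (and analogously for $q$ and $r$). Writing $2k = 1 + mp$, the left-hand side is even and $p$ is odd, forcing $m$ to be odd. Thus $k/p = 1/(2p) + m/2$, which reduces modulo~$2$ to either $1/2 + 1/(2p)$ or $3/2 + 1/(2p)$ (depending on $p \bmod 4$); in either case $k/p \bmod 2$ lies within $1/(2p)$ of the two-element set $\{1/2, 3/2\}$. Consequently, the point $(k/p, k/q, k/r) \bmod 2 \in [0,2)^3$ lies within $\ell^1$-distance $\tfrac{1}{2}(1/p+1/q+1/r)$ of some $\w \in \{1/2, 3/2\}^3$.

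To conclude, I would verify that every such $\w$ satisfies $|\w - \Lambda| = 3/2$. This follows from the fact, noted in the proof of Lemma~\ref{weyl}, that the full group $\Z/2\Z \wr S_3$ preserves both $\Lambda$ and the $\ell^1$-norm, and hence preserves $|\cdot - \Lambda|$; together with the observation that all eight points in $\{1/2, 3/2\}^3$ lie in a single orbit of this group on $\R^3/\Lambda$ (sign flips send $(1/2,1/2,1/2)$ to $(3/2,1/2,1/2)$ modulo~$\Lambda$, and so on), this reduces the claim to the known equality $|(1/2,1/2,1/2) - \Lambda| = 3/2$. The triangle inequality then gives
\[ |k\v - \Lambda| \;\ge\; |\w - \Lambda| - |k\v - \w| \;\ge\; \frac{3}{2} - \frac{1}{2}\!\left(\frac{1}{p} + \frac{1}{q} + \frac{1}{r}\right) \;>\; 1, \]
the final strict inequality coming from the hyperbolicity hypothesis $1/p+1/q+1/r<1$. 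Since no triple in the Hilbert series has all three entries odd, this establishes Theorem~\ref{maintheorem} in the all-odd case with no case-checking required. There is no genuine obstacle here; the only routine verification is that $\{1/2, 3/2\}^3$ really consists of eight ``deepest holes'' of~$\Lambda$.
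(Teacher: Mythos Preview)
Your proof is correct and follows essentially the same idea as the paper's: choose $k$ with $2k \equiv \pm 1 \pmod{pqr}$ (the paper takes the explicit $k=(pqr\pm 1)/2$, you take $k\equiv 2^{-1}\pmod{[p,q,r]}$ and fix parity), so that each $k/p,k/q,k/r$ lands within $1/(2p),1/(2q),1/(2r)$ of a half-integer, and then use that all half-integer points are at distance $3/2$ from~$\Lambda$.

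One small difference worth noting: the paper closes with the cruder bound $\tfrac{3}{2}-\tfrac{1}{2}(1/p+1/q+1/r)\ge 1$ under $\min(p,q,r)\ge 6$, which it imports from the (computationally heavy) Lemma~\ref{medium}. Your final step instead invokes the hyperbolicity hypothesis $1/p+1/q+1/r<1$ directly to get a strict inequality, and then observes that no Hilbert-series triple is all odd. This makes your argument self-contained and logically independent of the rest of~\S\ref{sec:small}, which is a mild improvement. Your verification that all eight points of $\{1/2,3/2\}^3$ are deepest holes of~$\Lambda$ via the $(\Z/2\Z\wr S_3)$-symmetry is also more explicit than the paper, which tacitly relies on this (the reduction ``$\bmod\ \Z$'' in the paper leaves the $\bmod\ 2$ ambiguity unaddressed).
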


\begin{proof}
Note that \(pqr\) is odd so we can let \(k\) be one of
\(\frac{pqr + 1}{2}, \frac{pqr-1}{2},\)
both of which are prime to \(pqr\) and one of which is odd, so prime to \(n = [2,p,q,r]\). For that choice, we have
\[ \frac{k}{p} = \frac{q r}{2}  \pm \frac{1}{2p} \equiv \frac{1}{2} \pm \frac{1}{2p} \bmod \Z.\]
Thus we find that
\[ \left| \left( \frac{k}{p},\frac{k}{q},\frac{k}{r} \right) - \Lambda \right|
\ge \frac{3}{2} - \frac{1}{2p} - \frac{1}{2q} - \frac{1}{2r}
\ge \frac{3}{2} - \frac{3}{6} \ge 1,\]
as soon as \(\min(p,q,r)  \ge 6\), which we can assume by Lemma~\ref{medium}.
\end{proof}

\subsection{Proof of Theorem~\ref{maintheorem}}

Assume that \((p,q,r)\) is a counter example to Theorem~\ref{maintheorem},
and let \(n = [2,p,q,r]\).
By Lemma~\ref{mediumplus} we know that \(\min(p,q,r) = B > 885\).
By Lemma~\ref{odd}, we may also assume that \(n = [2,p,q,r] =[p,q,r]\).
By Lemma~\ref{circular}, with \(n = [p,q,r]\),
we therefore deduce that
\[n \ge 117288381359406970983270 = 1.1 \times 10^{23},\]
and moreover that \(n\) has at least \(r \ge 18\) prime factors.
As in the proof of Lemma~\ref{mediumplus},
we write \((p,q,r)=(p,ad,bd)\) with \(p = B\) and
\[\begin{aligned}
b & \le 6B, \\
a & \le 6B, \\
ab & \le 30B,
\end{aligned}
\]
and we are done providing that
\begin{equation}
J(a b d) \le \frac{a  b d}{6p(a + b)},
\end{equation}
and hence done if
\begin{equation}
J(a b d) \le \frac{a  b d}{72 B^2}.
\end{equation}

We know that \(m = [p,ad,bd]/[ad,bd] \le 6\), and so \(n\)
divides \(6abd\). Also the primes dividing \(n\) are the
primes dividing \(abd\) so \(J(n) = J(abd)\). 
Hence we are done if
\[J(n) \le \frac{n}{432 B^2}.\]
On the other hand, by Lemma~\ref{circular}, we are also done if
\[J(n) \le \frac{2B}{15}.\]
if neither of these are satisfied, then
\[J(n)^3 = J(n) \cdot J(n)^2 > \frac{n}{432 B^2} \cdot \left(\frac{2B}{15}\right)^2
= \frac{n}{24300}\]
But this is a contradiction by Lemma~\ref{sqbound}
as soon as \(r \ge 16\), and as noted above we may assume that \(r \ge 18\).
This completes the proof of Theorem~\ref{maintheorem}. \qed

\subsection{Acknowledgments}

The genesis of this paper  was
a project  of the second author
supervised by the first author. The initial problem
was to consider triples of the form \((p,q,r) = (6,d,d)\),
which can be handled in a completely elementary way.
The case when \(p=6\) can be thought of as the ``worst case''
in light of  Lemma~\ref{onedim}; it already includes \(8\)
of the \(11\) elements of the Hilbert Series.
It is a short step from this  to the case of \((p,q,r) = (6,ad,bd)\)
for small \((a,b)=1\), and then to Lemma~\ref{medium}, and finally to
Lemma~\ref{mediumplus}. 
At the same time, one can also give a completely elementary
proof of Theorem~\ref{5lemma}  (or rather its analogue) for hyperplanes rather than lines,
and additionally to prove Theorem~\ref{5lemma} for all but finitely many
(explicit) lines on any given hyperplane, using
a version of Lemma~\ref{elementary}. This is not quite good enough
to reduce Theorem~\ref{5lemma} to a finite calculation, since one
has to worry about possible exceptions on every hyperplane ---
  boundary
cases like the line \((t/2,t/3,t/6)\)  do lie on infinitely many hyperplanes. 
The argument
required to reduce Theorem~\ref{5lemma} to finitely many hyperplanes (using Fourier analysis or the geometry of numbers)
 is the only non-elementary step in this paper.
We thank  Andrew Sutherland at MIT for providing us access to
his \(128\) core machine with a \texttt{magma} license,
and with help in setting up the scripts to run in parallel.

\appendix

\section{Remarks on computations}
In this section, we give some more details on our explicit
computations, with links to files on \texttt{github}.
We also make some remarks on computational efficiently.
Suppose we have a triple of integers \((p,q,r)\) and we wish to
find either:
\begin{enumerate}
\item An integer \((k,n)=1\) with \(n=[2,p,q,r]\) such that
\[ \left| \left(\frac{k}{p},\frac{k}{q},\frac{k}{r}\right) - \Lambda \right| \ge 1\]
\item An real number \(t\) such that
\[ \left| \left(\frac{t}{p},\frac{t}{q},\frac{t}{r}\right) - \Lambda \right| \ge 1 + \frac{1}{5}.\]
\end{enumerate}
Following~\cite{Curt}, a natural approach to the first problem
is simply to repeatedly pick a random integer \((k,n)=1\) and check
if the condition is satisfied. 
We expect
(from~\cite{Bilu})
from the spectral gap condition that this is satisfied for a large positive percentage
of \(k\) (independent of \(n\)), unless \((p,q,r)\) is a triple for
which no such \(k\) exists. Hence this provides
an excellent probabilistic test ---  if we run  this up to \(1000\) times
until it finishes, this is exceedingly likely to find such a \(k\) if \((p,q,r)\)
is not in the Hilbert Series (or some other unknown exception 
before Theorem~\ref{maintheorem} is proved). Moreover, there is no issue
with false positives, only false negatives --- if a triple does pass
all \(1000\) tests, we can check it theoretically by hand (this never happened). 

The approach in the second case is exceedingly similar. Instead of choosing \(t\)
to be a random integer prime to \(n\), we choose a random integer in \([-1000n,1000n]\)
and then divide by \(1000)\) to get a rational number in \([-n,n]\). This works equally well in practice.
Computing \(J(n)\) for relatively small values of \(n\) (say \(n \le 10000\))
is very easy to do directly and so we make no further comments
on these calculations. The most difficult computations 
we use are the upper bounds for \(J(n)\) for integers \(n\) with
\(r\) prime divisors --- but these computations have
already been done in other papers
which we may simply cite.)

\subsection{The case of fixed primes \texorpdfstring{\(p\)}{p}} \label{comment}
By far, the longest computation was the verification
that \(\min(p,q,r) \ge 885\) in the proof of Lemma~\ref{mediumplus}.
Our original program in \texttt{magma}~\cite{MR1484478}
first computed in the range to \(B \le 33\), and then \(B \le 100\),
the second computation finishing in  slightly under one day.
The program for \(B = 101 \ldots 885\) was set running  with
a single core.
On the other hand, the computations for every value of \(p\) are completely
independent, and so in particular this is a very parallelizable computation.
One issue is that \texttt{magma} licenses  do not easily transfer
to cloud machines, so one would have
to port the code to \texttt{c++}.
 Andrew Sutherland, however,
  generously provided use
of \(100\) cores on his \(128\) core machine which did
have  a \texttt{magma} license. We then divided
up the computation for \(B=34, \ldots 885\) into \(852\) individual computations
and set them running late overnight  in parallel on \(100\) cores. By  the
next morning the computations had  long since
been  completed. The original   computation (using one core)
was left to puff away even after the parallel computation had long since finished. 
It eventually completed its task (finding no further triples) after \(267\) hours.
 
 \subsection{The co-dimension two lines of \S~\ref{sec:codim2}}
 
For some computations in~\S~\ref{sec:fourier} we used mathematica. Computing
the values of \(E[(e(t)-24)^{12}]\) for the exceptional
hyperplanes takes a little less than \(30\) minutes. 
Forming the cross products of
\[\binom{6337}{2} = 20075616\]
pairs of elements, and then removing duplicates after scaling,
taking absolute values, and dividing by the GCD. Operating
on lists of this size in \texttt{mathematica} took well under \(10\) minutes.
Testing the exceptional cases can all be done in the order of minutes.
The explicit computer scrips together with
input and output files can be found here~\cite{git}.

\bibliographystyle{amsalpha}
\bibliography{Fricke}

 \end{document}